 \def\l@subsection{\@tocline{2}{0pt}{4pc}{6pc}{}}
\def\l@subsubsection{\@tocline{3}{0pt}{8pc}{8pc}{}}
\newcommand{\opnorm}{\@ifstar\@opnorms\@opnorm}
\newcommand{\@opnorms}[1]{%
  \left|\mkern-1.5mu\left|\mkern-1.5mu\left|
   #1
  \right|\mkern-1.5mu\right|\mkern-1.5mu\right|
}
\newcommand{\@opnorm}[2][]{%
  \mathopen{#1|\mkern-1.5mu#1|\mkern-1.5mu#1|}
  #2
  \mathclose{#1|\mkern-1.5mu#1|\mkern-1.5mu#1|}
}
\newcommand{\Z}{\mathbb{Z}}
\newcommand{\C}{\mathbb{C}}
\newcommand{\R}{\mathbb{R}}
\newcommand{\Li}{\mathcal{L}}
\newcommand{\cj}{\overline}
\newcommand{\op}{\mathrm}
\DeclarePairedDelimiterX{\normb}[1]{\lVert}{\rVert}{#1}
\numberwithin{equation}{section}
\newtheorem{theorem}{Theorem}[section]
\newtheorem{lemma}[theorem]{Lemma}
\newtheorem{proposition}[theorem]{Proposition}
\newtheorem{corollary}[theorem]{Corollary}
\let\origproofname\proofname
\renewcommand{\proofname}{\upshape\textbf{\origproofname}}
\newtheorem*{theorem*}{Theorem}
\newtheorem*{rep@theorem}{\rep@title}
\newcommand{\newreptheorem}[2]{%
\newenvironment{rep#1}[1]{%
 \def\rep@title{#2 \ref{##1}}%
 \begin{rep@theorem}}%
 {\end{rep@theorem}}}
\theoremstyle{definition}
\newtheorem{example}[theorem]{Example}
\newtheorem{definition}[theorem]{Definition}
\newtheorem{remark}[theorem]{Remark}
\newtheorem{ques}[theorem]{Question}
\theoremstyle{remark}
\newtheorem{claim}{Claim}
\newcommand{\bit}{\begin{itemize}}
\newcommand{\ei}{\end{itemize}}
\newcommand{\cn}{\mathbb{C}}
\newcommand{\zn}{\mathbb{Z}}
\newcommand{\te}{\text}
\renewcommand{\l}{\left}
\renewcommand{\r}{\right}
\newcommand{\ve}{\varepsilon}
\DeclareMathAlphabet{\mathmybb}{U}{bbold}{m}{n}
\newcommand{\one}{\mathmybb{1}}
\newcommand{\mc}[1]{\mathcal{#1}}
\newcounter{compitem}
\begin{document}

\title{Multiplier algebras of $L^p$-operator algebras}

\author{Andrey Blinov}
\address{AB: }
\email[]{andrew.blinov@gmail.com}

\author{Alonso Delfín}
\address{AD: Department of Mathematics, University of Colorado, Boulder CO 80309, USA}
\email[]{alonso.delfin@colorado.edu}

\author{Ellen Weld}
\address{EW: Department of Mathematics, Sam Houston State University, Huntsville TX 77341, USA}
\email[]{elw028@shsu.edu}

\date{\today}

\subjclass[2010]{Primary 46H15, 46H35; Secondary 	47L10}

\begin{abstract}
It is known that the multiplier algebra of an approximately unital and nondegenerate $L^p$-operator 
algebra is again an $L^p$-operator algebra. 
In this paper we investigate examples that drop both hypotheses. 
In particular, we show that the multiplier algebra of $T_2^p$, the algebra of strictly upper triangular $2 \times 2$ matrices 
acting on $\ell_2^p$, is still an $L^p$-operator algebra for any $p$. 
To contrast this result, we first provide a thorough study of the augmentation ideal of $\ell^1(G)$ for a discrete group $G$. We use this ideal to define a family of nonapproximately unital degenerate $L^p$-operator algebras, $F_{0}^p(\Z/3\Z)$, whose multiplier algebras cannot be represented 
on any $L^q$-space for any $q \in [1, \infty)$ as long as $p \in [1, p_0] \cup [p_0', \infty)$, where $p_0=1.606$ and 
$p_0'$ is its Hölder conjugate. 
\end{abstract}

\maketitle

\tableofcontents

\section{Introduction}

In 1971, C. S. Herz studied certain operators acting on $L^p$-spaces (see \cite{CSH71}). In fact, Herz
constructed group algebras of operators acting on $L^p$-spaces, those were referred to 
as algebras of $p$-pseudofunctions (see \cite{CSH73} and also \cite{Gard21} for a modern approach). 
About 15 years ago, the study of these operators regained interest from the community thanks to the general research works of M. Daws, E. Gardella, N. C. Phillips, N. Spronk, and H. Thiel, among others (see for instance \cite{Daw10}, \cite{DawSp19},  \cite{GarThi19}, \cite{GarThi20}, \cite{GarThi22}, and \cite{ncp2012AC}). This area is now known as $L^p$-operator algebras. 
Given that $L^p$-operator algebras include the large and well studied class of C*-algebras, part of the research in this area focuses on understanding what C*-properties and constructions can and cannot be extended to the $L^p$-case. In particular it is worth mentioning that $L^2$-operator algebras include the class of C*-algebras, but also the larger class of nonselfadjoint operator algebras. Three big differences between C*-algebras and $L^p$-operator algebras, even when $p=2$, which will be exploited here are: (1) $L^p$-operator norms are not unique, (2) some $L^p$-operator algebras cannot be nondegenerately represented on any Banach space, and (3) some $L^p$-operator algebras do not 
have contractive approximate identities (which will be henceforth abbreviated as cai's). 
A usually hard problem in $L^p$-operator algebras, 
arising in some sense due to the difficulty of computing $L^p$-operator norms,  is to determine whether a 
given Banach algebra can be isometrically represented on an $L^p$-space. Different 
techniques have been employed to show that certain Banach algebras cannot be represented on $L^p$-spaces, see for instance Theorem 2.5 in \cite{GarThi16} and Example 4.7 in \cite{blph2020} where it is shown that the class of $L^p$-operator algebras is not closed under taking quotients when $p \neq 2$. 
As a main result of this paper, we look at certain algebras with no contractive approximate identities to conclude that, for certain values of $p \neq 2$, the class of $L^p$-operator algebras is not closed under taking multiplier algebras. 

This paper is organized as follows: We start with a brief review of the multiplier algebra construction (as double centralizers)
for general Banach algebras. In Section \ref{secMA}, we present main results for the construction of the multiplier algebra, $M(A)$, of a general Banach algebra $A$. 
It is known that if a Banach algebra $A$ has a cai and is nondegenerately representable on a Banach space $E$, then $M(A)$ can also be nondegenerately represented on $E$. In fact, in Corollary \ref{M(A)=dm}, we prove that for the known representation of $M(A)$ on $E$ the action is given by two-sided multipliers, agreeing with the well known C*-result.
Next, we investigate general properties 
of $M(A)$ when $A$ has a non-unital identity which means that $A$ an algebraic multiplicative identity with norm greater than 1 (see Definition \ref{ident&unit}).  This condition automatically prevents $A$ from having a cai and also from being nondegenerately represented on any Banach space, making these algebras suitable candidates to investigate whether their multiplier algebras can be nondegenerately represented on Banach spaces.

Starting in Section \ref{MainR} and onward, we work only with multiplier algebras of 
 $L^p$-operator algebras. As before, when $A$ is a nondegenerate 
 $L^p$-operator algebra with a cai, its multiplier algebra 
 is also a nondegenerate 
 $L^p$-operator algebra. 
We begin Section \ref{MainR} by exploring whether these assumptions are necessary for the multiplier algebra of an $L^p$-operator algebra to be itself an $L^p$-operator algebra. Consequently, we produce two contrasting examples 
 dealing with the multiplier algebra of
 degenerate $L^p$-operator algebras without cai's: 
 
 The first one, explored in Subsection \ref{UTM}, 
 deals with strictly upper triangular matrices, denoted $T^p_d$, acting on $\ell_d^p=\ell^p(\{1, \ldots, d\})$.
 When $d=2$, even though $T_2^p$ is itself a degenerate $L^p$-operator algebra without a cai, we have the following main result:
 \begin{reptheorem}{MT2p} 
 For any $p \in [1, \infty)$, $M(T_2^p)$ is isometrically  isomorphic to $\C^2$ with the supremum norm. In particular, for any $p, q \in [1, \infty)$, $M(T_2^p)$ is an $L^q$-operator algebra that is nondegenerately representable on $\ell_2^q$. 
 \end{reptheorem} The second one is explored in Subsection \ref{AugId}, where we focus on $\ell_0^1(G)$, the augmentation ideal of $\ell^1(G)$ for a discrete group $G$. As an algebra, $\ell_0^1(G)$ is
an $L^1$-operator algebra that has an
identity with norm strictly greater than $1$ when $G$ is finite and $\op{card}(G)>2$.
The augmentation ideal  $\ell_0^1(G)$ is defined as the kernel of the map $\ell^1(G) \to \C $ given by $a \mapsto \sum_{g \in G} a(g)$. 
We then study the main properties of this ideal and show that, when 
$G$ is finite, $\ell_0^1(G)$ has no cai and cannot be nondegenerately represented on any Banach space (see Proposition \ref{finunit}\eqref{nocainoND}).  
For our main result, we construct a family of $L^p$-operator algebras 
whose multiplier algebras cannot be represented on any $L^q$-space for 
any $q \in [1,\infty)$ as long as $p \in [1, p_0] \cup [p_0', \infty)$, where $p_0=1.606$ and 
$p_0'$ is its Hölder conjugate. This is done by looking at the image of the augmentation ideal $\ell_0^1(\Z/3\Z)$ in the group algebra $F^p(\Z/3\Z)$ under the left regular representation $\lambda_p \colon \ell^1(\Z/3\Z) \to \Li(\ell^p(\Z/3\Z))$. That is, for each $p \in [1, \infty)$, we put
\[
F^p_{0}(\Z/3\Z)=\lambda_p(\ell_0^1(\Z/3\Z)),
\]
and then denote its multiplier algebra as $M_0^p(\Z/3\Z) = M(F^p_{0}(\Z/3\Z))$. 
The Banach algebra $F_{0}^p(\Z/3\Z)$ has an algebraic identity, denoted $\one_0$, whose 
norm is strict greater than $1$. However, in Proposition \ref{MultAlgF_0} we show that 
$M_0^p(\Z/3\Z)$ is simply $F_{0}^p(\Z/3\Z)$ equipped with a different
norm in which $\one_0$ now has norm $1$. 
Our main result in this subsection is then the following:
\begin{reptheorem}{NotLpOpAlg}
Let $p_0=1.606$ and let $p_0'=\frac{p_0}{p_0-1}$. Take any $p \in [1, p_0] \cup [p_0', \infty)$. Then 
$M_0^p(\Z/3\Z)$, 
the multiplier algebra of $F^p_{0}(\Z/3\Z)$, is not an $L^q$-operator algebra for any $q\in[1,\infty)$.
\end{reptheorem}
 The main ingredient in the proof of Theorem \ref{NotLpOpAlg} follows a technique previously used 
 by Blecher and Phillips in Section 4 of \cite{blph2020} to show that the class of $L^p$-operator algebras is not closed under taking quotients. To be more precise, we exhibit a bicontractive idempotent $e \in M_0^p(\Z/3\Z)$ such that $\one_0-2e$ is not an invertible isometry when $p \in [1, p_0] \cup [p_0', \infty)$, contradicting a structural 
 result of Bernau and Lacey, Theorem \ref{bicontractiveinvertibleisometry} below, regarding bicontractive idempotent acting on $L^p$-spaces.  Unfortunately, the bicontractive idempotent argument might not work for groups other that 
 $\Z/3\Z$. For instance, when $G=\Z/4\Z$ or $G=\Z/2\Z \oplus \Z/2\Z$,
the bicontractive idempotents in $M_0^p(G)=M(F^p_{0}(G))$
behave as expected for an $L^p$-operator algebra. 
The essential difficulty in deciding whether $M_0^p(G)$ is an $L^p$-operator algebra for a general finite group $G$ is that its norm is generally hard to compute (see \cite{HendOlsh2010} for instance).

In Section \ref{idempGelfand}, we restrict our attention to non-trivial finite abelian groups and show that when $n = \op{card}(G)>1$, the algebra $\ell_0^1(G)$ is algebraically 
 isomorphic to $\C^{n-1}$ with pointwise multiplication via the Gelfand map. 
 This yields a nice description of the idempotents of $M_0^p(\Z/n\Z)$, see Proposition \ref{idempotentsprop}, that 
 can be compared with Rudin's more general work for idempotents in $L^1(G)$ when $G$ is a locally compact group (see W. Rudin's work \cite{Rud63} and \cite{rudin1990fourier}). 
 
In the last section we present a brief discussion on different norms that make 
 $\C^n$ an $L^p$-operator algebra, posing a final question regarding what properties a norm on $\C^n$ must satisfy
to be an $L^p$-operator algebra. Answering this general question might provide a 
 tool to decide when $M_0^p(G)$ is representable on an $L^q$-space. 
 
We end our introduction with a brief overview of the general notation that will be used throughout the paper. 
 When $E$ is a Banach space, we denote by $\Li(E)$ the space of all bounded linear maps $E \to E$. 
For $p\in[1,\infty)$, we sometimes write $L^p(\mu)$ for $L^p(\Omega,\mu)$, the $L^p$-space of a measure space $(\Omega,\mu)$. In particular, when $\nu$ is the counting measure on $\Omega$ we simply write $\ell^p(\Omega)$ for $L^p(\Omega, \nu)$ and, for $d \in \Z_{\geq 2}$,
 we write $\ell_d^p$ for $\ell^p(\{1, \ldots, d\})$.

\section{Preliminaries}

We first establish standard definitions for representations of 
Banach algebras on Banach spaces. The definition 
of $L^p$-operator algebras will be given using 
this terminology. We then present basic facts about Banach algebras 
with either contractive approximate units or
with non-unital identities.
 
\begin{definition}
Let $A$ be a Banach algebra and let $E$ be a Banach space. A \textit{representation} of $A$ on 
$E$ is a continuous algebra homomorphism $\pi \colon A \to \Li(E)$. 
\begin{enumerate}
\item We say that $\pi$ is \textit{contractive} if 
$\| \pi(a)\| \leq \|a\|$ for all $a \in A$.
\item We say that $\pi$ is \textit{isometric} if 
$\| \pi(a)\| =\|a\|$ for all $a \in A$.
\item We say that $\pi$ is \textit{nondegenerate} if  
\[
\pi(A)E=
 \op{span}(\{ \pi(a)\xi \colon a \in A \mbox{ and } \xi \in E \}),
\]
is dense in $E$, and that 
$A$ is \textit{nondegenerately representable} if 
it has a nondegenerate isometric representation. 
\end{enumerate}
\end{definition}

\begin{definition}\label{DefnLpOA}
Let $p\in[1,\infty)$. A Banach algebra $A$ is an \textit{$L^p$-operator algebra} if there is a measure space $(\Omega,\mu)$ and an isometric representation of $A$ on $L^p(\mu)$.
\end{definition}

\begin{definition}
Let $A$ be a Banach algebra. We say that $A$ has 
a \emph{contractive approximate identity (cai)} if 
there is a net $(e_\lambda)_{\lambda \in \Lambda}$ in $A$ 
such that $\| e_\lambda \| \leq 1$ for all $\lambda \in \Lambda$
and for all $a \in A$, 
\[
\lim_{\lambda \in \Lambda} \| ae_\lambda -a \|  =\lim_{\lambda \in \Lambda} \| e_\lambda a -a \| = 0.
\]
\end{definition}

\begin{lemma}\label{normsup}
Let $A$ be a Banach algebra with a cai.
Then for any $a \in A$,
\[
\| a \| = \sup_{\| b \| = 1} \| ab \| = \sup_{\| b\|=1} \|ba \|.
\]
\end{lemma}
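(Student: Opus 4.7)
The plan is to prove each of the two equalities by showing both inequalities, handling the two one-sided cases symmetrically.

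For the upper bound, I would simply invoke submultiplicativity: if $\|b\|=1$ then $\|ab\|\leq \|a\|\|b\|=\|a\|$, so taking the supremum gives $\sup_{\|b\|=1}\|ab\|\leq \|a\|$, and analogously for $\sup_{\|b\|=1}\|ba\|$.

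For the reverse inequality, I would use the cai $(e_\lambda)_{\lambda\in\Lambda}$ directly. Since $\|e_\lambda\|\leq 1$, I can compare $\|ae_\lambda\|$ to the supremum: if $e_\lambda\neq 0$, then $b_\lambda := e_\lambda/\|e_\lambda\|$ has norm $1$, giving
\[
\|ae_\lambda\|=\|e_\lambda\|\,\|ab_\lambda\|\leq \sup_{\|b\|=1}\|ab\|,
\]
while if $e_\lambda=0$ the inequality is trivial. Then passing to the limit in $\lambda$ and using the defining property $\lim_\lambda\|ae_\lambda-a\|=0$, which implies $\lim_\lambda\|ae_\lambda\|=\|a\|$ by continuity of the norm, yields $\|a\|\leq \sup_{\|b\|=1}\|ab\|$. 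The same argument with $\lim_\lambda\|e_\lambda a-a\|=0$ handles the other one-sided supremum.

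There is no real obstacle here; the only minor subtlety is that the cai elements need not have norm exactly $1$, which is why the rescaling step above is needed to compare $\|ae_\lambda\|$ honestly against the supremum over unit-norm elements.
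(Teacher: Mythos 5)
Your proof is correct and follows essentially the same route as the paper's: submultiplicativity gives the upper bound, and the lower bound comes from rescaling the cai elements $e_\lambda$ to unit vectors and using $\|ae_\lambda\|\to\|a\|$ (the paper writes this as the chain $\sup_{\|b\|=1}\|ba\|\geq\sup_\lambda\|e_\lambda a\|/\|e_\lambda\|\geq\sup_\lambda\|e_\lambda a\|\geq\|a\|$). Your explicit treatment of the case $e_\lambda=0$ is a small point of extra care the paper glosses over, but the argument is the same.
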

\begin{proof}
Let $(e_\lambda)_{\lambda \in \Lambda}$
be a cai of $A$. Then for any $a \in A$,
\[
\| a \| \geq \sup_{\| b\|=1} \| ba\| 
\geq \sup_{\lambda \in \Lambda} \frac{\| e_\lambda a\|}{\|e_\lambda\|}
 \geq \sup_{\lambda \in \Lambda} \| e_\lambda a\| \geq  \|a\|.
\]
The other equality is proved analogously. 
\end{proof}

\begin{definition}\label{ident&unit}
Let $A$ be a Banach algebra. We say that $A$ has 
an \emph{identity element} 
if there is an element $1_A\in A$ such that, for all $a\in A$,
\[
1_A\cdot a=a=a\cdot 1_A.
\]
In addition, if $\|1_A\|=1$, we call $1_A$ a \emph{unit} of $A$, 
otherwise we say $1_A$ is a \emph{non-unital identity} of $A$.
\end{definition}

We will work below with a Banach algebra that has a non-unital identity, see Subsection \ref{AugId}. The 
following two lemmas show that such Banach algebras cannot have cai's
and cannot be nondegenerately represented 
on any Banach space. 

\begin{lemma}\label{noCAI}
Let $A$ be a Banach algebra with a non-unital identity $1_A \in A$
(that is $\| 1_A \| \neq 1$). Then, $A$ cannot have 
a cai. 
\end{lemma}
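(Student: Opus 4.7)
The plan is to derive a contradiction by showing that if $A$ admits a cai, then its identity must have norm $1$. The cleanest route is via the immediately preceding Lemma \ref{normsup}. Applying that lemma to the specific element $a = 1_A$, we obtain
\[
\|1_A\| \;=\; \sup_{\|b\| = 1} \|1_A \cdot b\| \;=\; \sup_{\|b\| = 1} \|b\| \;=\; 1,
\]
since $1_A \cdot b = b$ for every $b \in A$. This directly contradicts the hypothesis $\|1_A\| \neq 1$.

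If one preferred to avoid invoking Lemma \ref{normsup}, essentially the same argument can be carried out from first principles in two lines: given a cai $(e_\lambda)_{\lambda \in \Lambda}$, the identity satisfies $1_A \cdot e_\lambda = e_\lambda$, while the defining property of the cai forces $\|1_A \cdot e_\lambda - 1_A\| \to 0$, so $e_\lambda \to 1_A$ in norm. Continuity of the norm combined with $\|e_\lambda\| \leq 1$ then gives $\|1_A\| \leq 1$. The reverse inequality $\|1_A\| \geq 1$ holds automatically whenever $A \neq \{0\}$ since $\|a\| = \|1_A \cdot a\| \leq \|1_A\| \|a\|$ for any nonzero $a \in A$. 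Combined, $\|1_A\| = 1$, which is the desired contradiction.

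There is no serious obstacle here: the statement is an immediate consequence of Lemma \ref{normsup}, so the proof is essentially a one-line calculation. The only thing worth double-checking is the trivial case $A = \{0\}$, which is ruled out by the hypothesis that $A$ contains an identity with $\|1_A\| \neq 1$ (in particular $1_A \neq 0$).
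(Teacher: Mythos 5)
Your proof is correct, but your primary route is not the one the paper takes. You deduce the contradiction in one line by specializing Lemma \ref{normsup} to $a=1_A$, getting $\|1_A\|=\sup_{\|b\|=1}\|1_A b\|=\sup_{\|b\|=1}\|b\|=1$; the paper instead argues directly with the net: from $\|e_\lambda 1_A-1_A\|\to 0$ it gets $e_\lambda\to 1_A$ in norm, hence $\|e_\lambda\|\to\|1_A\|$, which forces $\|1_A\|\le 1$ and contradicts $\|1_A\|>1$. Your ``first principles'' alternative is essentially the paper's proof, so you have both arguments; the Lemma \ref{normsup} shortcut buys brevity (and reuses a result the paper has just proved), while the direct net argument is self-contained. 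One small caveat: your parenthetical claim that $\|1_A\|\neq 1$ forces $1_A\neq 0$ is not literally true (the zero algebra has $\|1_A\|=0\neq 1$ and trivially admits a cai), so strictly speaking the degenerate case $A=\{0\}$ must be excluded by convention or by reading ``non-unital identity'' as $\|1_A\|>1$; the paper makes the same implicit assumption when it opens with ``it is clear that $\|1_A\|>1$,'' so this is a shared triviality rather than a gap in your argument, but the justification you give for it is the one sentence I would tighten.
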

\begin{proof}
It is clear that $\| 1_A \| > 1$.
Now suppose $A$ has a cai, say $(e_\lambda)_{\lambda \in \Lambda}$. 
Then, for each $\lambda \in \Lambda$
\[
0 \leq | \|e_\lambda\| - \|1_A\| | \leq  \| e_\lambda - 1_A \|= \| e_\lambda 1_A - 1_A \|.
\]
Therefore, by the squeeze theorem 
\[
\lim_{\lambda } | \|e_\lambda\| - \|1_A\| |=0.
\]
This implies that $\lim_{\lambda } \|e_\lambda\| = \|1_A\|$, whence $\| 1_A\| \leq 1$ which is a contradiction. 
\end{proof}

\begin{lemma}\label{noNDrep}
Let $A$ be a Banach algebra with a non-unital identity $1_A \in A$
(that is $\| 1_A \| \neq 1$). Then $A$ cannot be nondegenerately represented on any Banach space.
\end{lemma}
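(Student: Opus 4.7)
The plan is to argue by contradiction. Suppose there is a nondegenerate isometric representation $\pi \colon A \to \Li(E)$ on some Banach space $E$. Since $1_A$ is a multiplicative identity in $A$, we have $1_A = 1_A \cdot 1_A$, so submultiplicativity of the norm gives $\|1_A\| \leq \|1_A\|^2$, forcing $\|1_A\| \geq 1$ (noting that $1_A \neq 0$, else $A = \{0\}$). Combined with the hypothesis $\|1_A\| \neq 1$, this yields $\|1_A\| > 1$, exactly as in the opening line of the proof of Lemma \ref{noCAI}.

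Next I would exploit the identity relation at the level of the representation: for every $a \in A$ and $\eta \in E$,
\[
\pi(1_A)\pi(a)\eta = \pi(1_A \cdot a)\eta = \pi(a)\eta.
\]
Extending linearly, $\pi(1_A)$ acts as the identity on the linear span $\pi(A)E$. Because $\pi$ is nondegenerate, this span is dense in $E$, and because $\pi(1_A) \in \Li(E)$ is continuous, it follows that $\pi(1_A) = \mathrm{id}_E$.

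To conclude, note that $E \neq \{0\}$: indeed, if $E = \{0\}$ then $\pi$ would be the zero map, contradicting the fact that $\pi$ is isometric on the nonzero algebra $A$. Hence $\|\mathrm{id}_E\| = 1$, so $\|\pi(1_A)\| = 1$. Since $\pi$ is isometric, $\|1_A\| = \|\pi(1_A)\| = 1$, contradicting $\|1_A\| > 1$. The argument is essentially forced once the correct definitions are unwound; the only mildly delicate point is the density-plus-continuity step promoting $\pi(1_A)\xi = \xi$ from the dense subspace $\pi(A)E$ to all of $E$, which is the place where nondegeneracy is genuinely used.
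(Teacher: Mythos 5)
Your argument is correct and follows essentially the same route as the paper: use nondegeneracy to promote $\pi(1_A)\pi(a)\xi=\pi(a)\xi$ to $\pi(1_A)=\mathrm{id}_E$, then contradict $\|1_A\|>1$ via isometry. The extra details you supply (why $\|1_A\|>1$ and why $E\neq\{0\}$) are fine but the paper treats them as immediate.
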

\begin{proof}
Assume on the contrary that there is a Banach space $E$ and an isometric nondegenerate representation 
$\pi \colon A \to \Li(E)$. 
Then, for any $a \in A$ and any $\xi \in E$ we have 
\[
 \pi(1_A)\pi(a)\xi = \pi(1_Aa)\xi  =\pi(a)\xi.
\]
Nondegeneracy now gives that $\pi(1_A)=\op{id}_{E}$, but this implies 
\[
1=\|\op{id}_{E}\| = \| \pi(1_A)\| =\| 1_A\| >1,
\]
a contradiction. 
\end{proof}

In Section \ref{MainR}, we will take the multiplier algebra of certain $L^p$-operator algebras in order to construct a family of unital Banach 
algebras that are not representable on any $L^q$-space for any $q \in [1, \infty)$.
Our technique, which was also used by Blecher and Phillips in Section 4 of \cite{blph2020}, will be to show that some bicontractive 
idempotents on the algebra do not behave as expected for 
$L^q$-operator algebras. We record precise definitions and 
known facts below. 

\begin{definition}\label{bicontractivedefn}
Let $A$ be a unital Banach algebra with unit $1_A$. 
\begin{enumerate}
\item  An idempotent $e\in A$ is \textit{bicontractive} if $\|e\|\leq 1$ and $\|1_A-e\|\leq 1$. 
\item An element $s\in A$ is an \textit{invertible isometry} if $\|s\|=1$ and $\|s^{-1}\|=1$.
\end{enumerate}
\end{definition}

The following is a combination of Theorem 2.1 in \cite{BerLa77} and part (c) of Lemma 2.29 in \cite{blph2020}.

\begin{theorem}\label{bicontractiveinvertibleisometry}
Let $p\in[1,\infty)$ and let $(\Omega,\mu)$ be a measure space. Then an idempotent $e\in \Li(L^p(\mu))$ is bicontractive if and only if $1-2e$ is an invertible isometry, where $1=\op{id}_{L^p(\mu)}$. 
\end{theorem}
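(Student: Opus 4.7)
The plan is to split the equivalence into two directions, only one of which uses the $L^p$-structure.

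First I would dispense with the implication ``$1-2e$ is an invertible isometry $\Rightarrow$ $e$ is bicontractive''. This direction is purely algebraic: setting $s = 1-2e$ with $\|s\| = 1$, we recover $e = \tfrac{1}{2}(1 - s)$ and $1 - e = \tfrac{1}{2}(1 + s)$, so the triangle inequality immediately gives $\|e\| \leq \tfrac{1}{2}(1 + \|s\|) = 1$ and $\|1-e\| \leq \tfrac{1}{2}(1 + \|s\|) = 1$. Notably this argument does not use that the ambient algebra is $\Li(L^p(\mu))$; it works verbatim in any unital Banach algebra, which is why no hypothesis beyond the norm bound on $s$ is needed here.

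For the converse, the first observation is the purely algebraic identity $(1-2e)^2 = 1 - 4e + 4e^2 = 1$, valid for any idempotent $e$. In particular $1-2e$ is its own inverse, so proving $1-2e$ is an invertible isometry reduces to the single norm estimate $\|1-2e\| \leq 1$, since then automatically $\|(1-2e)^{-1}\| = \|1-2e\|$, and the submultiplicative inequality $1 = \|1\| \leq \|1-2e\|\cdot\|(1-2e)^{-1}\|$ forces both norms to equal $1$. The triangle inequality only gives the weak bound $\|1-2e\| \leq \|1-e\| + \|e\| \leq 2$, so to sharpen this from $2$ to $1$ one must exploit the geometry of $L^p(\mu)$.

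This is exactly the content of the Bernau--Lacey structure theorem: a bicontractive projection $e$ on $L^p(\mu)$ must have the form $e = \tfrac{1}{2}(1+u)$ for some isometric involution $u$ of $L^p(\mu)$ (arising from a measurable decomposition of $\Omega$ together with a $\pm 1$ sign choice on each piece). Granted this, $1 - 2e = -u$ is an isometry, so $\|1-2e\| = 1$, finishing the proof. The main obstacle is therefore entirely concentrated in invoking this structure theorem, whose proof relies on the Banach-lattice / measure-theoretic description of contractive projections on $L^p$ and is genuinely special to $L^p(\mu)$ (it fails for general Banach-space idempotents). Rather than reprove it, I would simply cite Theorem~2.1 of \cite{BerLa77} combined with part (c) of Lemma~2.29 of \cite{blph2020}, as the statement already advertises.
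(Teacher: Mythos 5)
Your proposal is correct and follows essentially the same route as the paper, which likewise does not reprove the hard direction but simply cites Theorem~2.1 of \cite{BerLa77} together with Lemma~2.29(c) of \cite{blph2020}; your additional observations (the triangle-inequality argument for the easy direction, the identity $(1-2e)^2=1$, and the reduction to $\|1-2e\|\leq 1$) are all valid. The only caveat is that your parenthetical description of the isometric involution $u$ as a measurable decomposition with $\pm 1$ signs is an oversimplification of the Lamperti-type structure of $L^p$-isometries, but since you defer to the cited structure theorem this does not affect the argument.
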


\section{Multiplier Algebras}\label{secMA}

Below we present general definitions and some results about multiplier algebras of Banach algebras.  

Parts of following definition come from Section 2.5 in \cite{Dales2000BanachAA}. 

\begin{definition}\label{multAlg}
Let $A$ be a Banach algebra. 
\begin{enumerate}
\item A \emph{left multiplier of $A$} is a map $L \in \Li(A)$
satisfying 
\[
L(ab)=L(a)b \text{ \ for all \ } a,b \in A.
\]
The set of 
left multipliers of $A$ is denoted by $LM(A)$. 
\item A \emph{right multiplier of $A$} is a map $R \in \Li(A)$
satisfying 
\[
R(ab)=aR(b) \text{ \ for all \ } a,b \in A.
\]
 The set of 
right multipliers of $A$ is denoted by $RM(A)$. 
\item A \emph{double centralizer of $A$} is a pair $(L,R)$
with $L \in LM(A)$, $R \in RM(A)$, and 
\[
aL(b)=R(a)b \text{ \ for all \ } a,b \in A.
\]
We define $M(A)$, the \emph{multiplier algebra of $A$},
to be the subset of $\Li(A) \times \Li(A)^{\op{op}}$ (equipped with the supremum norm) consisting of double centralizers.
\end{enumerate}

\end{definition}

It is clear that $M(A)$ is a unital Banach subalgebra of  $\Li(A) \times \Li(A)^{\op{op}}$.

If $A$ is a C*-algebra, $M(A)$ is equivalently defined as
 the set of two sided multipliers of $A$ on any Hilbert space as long as $A$ acts nondegenerately on it;
 see Definition 2.2.2 in \cite{wegge-olsen_2004} for instance. 
 This will be also the case for a Banach algebra that has a cai and 
 that can be nondegenerately represented on a Banach space. 
 We start with the definition of a map from $A$ to $M(A)$
 that will be a canonical inclusion when $A$ has a cai. 
 
 \begin{definition}\label{iotaA}
 Let $A$ be a Banach algebra. For each $a \in A$ we define maps 
 $L_a \colon A \to A$ and $R_a \colon A \to A$ by left and right multiplication respectively, that is
 for any $b \in A$
 \[
L_a(b)=ab \ \text{ and } \ R_a(b)=ba.
 \]
 We get a map $\iota_A \colon A \to M(A)$ given by $\iota_A(a) =(L_a, R_a)$.
 \end{definition}

\begin{lemma}\label{iotaM}
Let $A$ be a Banach algebra with a cai, 
and let $(L,R) \in M(A)$. Then $\| L\| = \| R \|$. 
Furthermore, the map $\iota_A \colon A \to M(A)$ from 
Definition \ref{iotaA} 
is an isometric algebra homomorphism and $\iota_A(A)$ is a closed  two-sided ideal in $M(A)$. 
\end{lemma}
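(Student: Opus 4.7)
The plan is to use the cai to couple the two components of a double centralizer, then deduce each of the three claims in turn. Fix a cai $(e_\lambda)_{\lambda\in\Lambda}$ with $\|e_\lambda\|\leq 1$ and let $(L,R)\in M(A)$. For the norm equality, the centralizer identity gives $e_\lambda L(b)=R(e_\lambda)b$ for every $b\in A$, and since $e_\lambda L(b)\to L(b)$ while $\|R(e_\lambda)b\|\leq \|R\|\,\|b\|$, passing to the limit yields $\|L(b)\|\leq \|R\|\,\|b\|$, hence $\|L\|\leq \|R\|$. A symmetric argument using $aL(e_\lambda)=R(a)e_\lambda$ and $R(a)e_\lambda\to R(a)$ gives the reverse inequality.

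Next I would verify the properties of $\iota_A$. A direct check using associativity shows that $(L_a,R_a)\in M(A)$, so $\iota_A$ indeed takes values in $M(A)$. The relations $L_a L_b=L_{ab}$ and $R_b R_a=R_{ab}$, combined with the opposite product on the second factor of $\Li(A)\times \Li(A)^{\op{op}}$, say that $\iota_A(a)\iota_A(b)=\iota_A(ab)$. With the supremum norm on $M(A)$ and the equality $\|L_a\|=\|R_a\|$ just established, one has $\|\iota_A(a)\|=\|L_a\|=\sup_{\|b\|=1}\|ab\|=\|a\|$, the last equality being Lemma \ref{normsup}.

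Finally, $\iota_A(A)$ is closed in $M(A)$ because $\iota_A$ is an isometry from the complete space $A$. For the ideal property, I would verify the explicit absorption formulas
\[
(L,R)\cdot \iota_A(a)=\iota_A(L(a)) \tand \iota_A(a)\cdot (L,R)=\iota_A(R(a)),
\]
each of which reduces to a short componentwise computation using the three defining identities of Definition \ref{multAlg} (the left/right multiplier identities for one component, the centralizer identity $bL(c)=R(b)c$ for the other).

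The main obstacle is really the first step: without a cai the equality $\|L\|=\|R\|$ can fail in general, and it is the cai that ``unlocks'' the centralizer identity so that each one-sided norm controls the other. Once that is in place, the remainder is essentially bookkeeping with the three defining identities of a double centralizer.
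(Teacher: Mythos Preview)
Your proposal is correct and follows essentially the same route as the paper: you establish $\|L\|=\|R\|$ via the centralizer identity together with the cai, then use Lemma~\ref{normsup} for the isometry of $\iota_A$, and finish with the same absorption formulas $(L,R)\iota_A(a)=\iota_A(L(a))$, $\iota_A(a)(L,R)=\iota_A(R(a))$ for the ideal property. The only cosmetic difference is that the paper invokes Lemma~\ref{normsup} already in the first step (writing $\|L(a)\|=\sup_{\|b\|=1}\|bL(a)\|=\sup_{\|b\|=1}\|R(b)a\|$), whereas you use the cai directly there; the content is the same.
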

\begin{proof}
Lemma \ref{normsup} gives
\[
\| L(a) \| = \sup_{\|b\|=1} \|bL(a)\| = \sup_{\|b\|=1} \| R(b)a\| \leq \|R\|\|a\|.
\]
Thus, $\| L \| \leq \|R\|$. 
Similarly, 
\[
\| R(a) \| = \sup_{\|b\|=1} \|R(a)b\| = \sup_{\|b\|=1} \| aL(b)\| \leq \|a\|\|L\|,
\]
whence $\|R\| \leq \|L\|$ and therefore $\|L\|=\|R\|$. 
Note that $\iota_A$ is clearly a linear map. To check it is multiplicative 
take $a, b \in A$ and compute 
\[
\iota_A(ab)=(L_{ab}, R_{ab})=(L_aL_b, R_bR_a)=(L_a,R_a)(L_b,R_b)=\iota_A(a)\iota_A(b).
\]
Once again, Lemma \ref{normsup} gives $\|L_a\|=\|R_a\|=\|a\|$,
showing that $\iota_A$ is isometric and that $\iota_A(A)$ is closed in $M(A)$. 
Finally, direct computations show that for any $a \in A$ and 
any $(L,R) \in M(A)$,
$\iota_A(a)(L,R)=\iota_A(R(a))$ and $(L,R)\iota_A(a) = \iota_A(L(a))$, 
whence $\iota_A(A)$ is indeed a two-sided ideal in $M(A)$, finishing the proof.
\end{proof}

A well known and useful fact is that whenever $A$ is a Banach algebra 
with a cai that is nondegenerately represented on a Banach space $E$, then $M(A)$ 
can be nondegenerately represented on $E$. 
For convenience, we state such result below and refer the reader to 
Theorem 4.1 and Remark 4.2 in \cite{GarThi20} for a 
complete proof.  

\begin{theorem}\label{ExtAMA}
Let $A$ be a Banach algebra with a cai and let  
$\pi \colon A \to \Li(E)$ be a contractive nondegenerate 
representation of $A$ on a Banach space $E$. Then 
$\pi$ induces a unique nondegenerate, contractive, and unital representation 
$\widehat{\pi} \colon M(A) \to \Li(E)$ such 
that, if $\iota_A$ is as in Definition \ref{iotaA}, then $\pi= \widehat{\pi} \circ \iota_A$. 
Furthermore, $\widehat{\pi}$ is isometric when $\pi$ is. 
\end{theorem}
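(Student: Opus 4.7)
The plan is to define $\widehat{\pi}$ first on the dense subspace $\pi(A)E$ by the prescription $\widehat{\pi}(L,R)(\pi(a)\eta) = \pi(L(a))\eta$, and then extend by continuity. A preparatory observation to record is that the cai $(e_\lambda)_{\lambda \in \Lambda}$ of $A$ gives $\pi(e_\lambda)\xi \to \xi$ for every $\xi \in E$: this is immediate on the dense subspace $\pi(A)E$ from $\pi(e_\lambda)\pi(a)\eta = \pi(e_\lambda a)\eta \to \pi(a)\eta$, and the uniform bound $\|\pi(e_\lambda)\| \leq 1$ (from contractivity of $\pi$) extends it to all of $E$ by a standard $3\varepsilon$-argument.

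The main obstacle I expect is showing that the prescription is well-defined and bounded on $\pi(A)E$. Here the double-centralizer identity $bL(a) = R(b)a$ is key: applied to $b = e_\mu$ and pushed through $\pi$ linearly on a finite sum, it yields
\[
\pi(e_\mu) \sum_i \pi(L(a_i))\eta_i = \pi(R(e_\mu)) \sum_i \pi(a_i)\eta_i.
\]
Taking norms, using $\|R(e_\mu)\| \leq \|R\| = \|(L,R)\|$ (the last equality by Lemma \ref{iotaM}), and letting $\mu \to \infty$ on the left via the strong convergence above, one arrives at
\[
\left\| \sum_i \pi(L(a_i))\eta_i \right\| \leq \|(L,R)\| \cdot \left\| \sum_i \pi(a_i)\eta_i \right\|.
\]
This simultaneously furnishes well-definedness (by taking the right-hand side to be zero) and contractivity of the extension $\widehat{\pi}(L,R) \in \Li(E)$, so $\widehat{\pi} \colon M(A) \to \Li(E)$ is a contraction.

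The remaining algebraic features then fall out on the dense subspace: multiplicativity from $L_1(L_2(a)) = (L_1 L_2)(a)$, unitality from inserting $L = R = \op{id}_A$, the factorization $\widehat{\pi} \circ \iota_A = \pi$ from $\widehat{\pi}(L_a, R_a)(\pi(b)\eta) = \pi(ab)\eta = \pi(a)\pi(b)\eta$, and nondegeneracy from $\widehat{\pi}(M(A))E \supseteq \widehat{\pi}(\iota_A(A))E = \pi(A)E$. Uniqueness is forced in the same vein: any contractive representation $\sigma$ with $\sigma \circ \iota_A = \pi$ must satisfy $\sigma(L,R)\pi(a)\eta = \sigma(\iota_A(L(a)))\eta = \pi(L(a))\eta$ via the module identity $(L,R)\iota_A(a) = \iota_A(L(a))$ established in the proof of Lemma \ref{iotaM}, hence agrees with $\widehat{\pi}$ on the dense subspace.

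Finally, for the isometric statement I would exploit the operator identity $\widehat{\pi}(L,R)\pi(e_\lambda) = \pi(L(e_\lambda))$, which is visible from the defining formula. When $\pi$ is isometric this gives $\|L(e_\lambda)\| = \|\pi(L(e_\lambda))\| \leq \|\widehat{\pi}(L,R)\|$ for every $\lambda$. A short argument using continuity of $L$ together with $e_\lambda a \to a$ shows $\|L(e_\lambda)\| \to \|L\|$, and combining with $\|L\| = \|(L,R)\|$ from Lemma \ref{iotaM} yields the reverse inequality $\|(L,R)\| \leq \|\widehat{\pi}(L,R)\|$.
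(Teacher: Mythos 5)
Your proposal is correct, and it is essentially the standard argument that the paper itself defers to (Theorem 4.1 and Remark 4.2 of the cited Gardella--Thiel reference): define $\widehat{\pi}$ on the dense subspace $\pi(A)E$ by $\pi(a)\eta \mapsto \pi(L(a))\eta$, use the cai together with the double-centralizer identity to get well-definedness and the bound $\|\widehat{\pi}(L,R)\|\leq\|(L,R)\|$, and recover the isometric case from $\widehat{\pi}(L,R)\pi(e_\lambda)=\pi(L(e_\lambda))$ and $\|L(e_\lambda)\|\to\|L\|=\|(L,R)\|$. All the steps you sketch (including the uniqueness via $(L,R)\iota_A(a)=\iota_A(L(a))$ from Lemma \ref{iotaM}) check out.
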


\begin{remark}
Theorem \ref{ExtAMA} can also be proved by either appropriately modifying the proof of Proposition 2.6.12 in \cite{BleLeMerd04} or by the methods described in Section 6 of \cite{John64}. We are grateful to both David Blecher and Hannes Thiel for pointing these references to us. An alternative way is to use 
Corollary 4.1.7 in \cite{Del2023}. This alternative proof uses that,  
just as in the C*-case, $M(A)$ is isometrically isomorphic to the two-sided multipliers of $A$ on any Banach space $E$ where $A$ acts nondegenerately. 
This fact is actually equivalent to Theorem \ref{ExtAMA}, as we see below in 
Corollary \ref{M(A)=dm}. 
\end{remark}

\begin{corollary}\label{M(A)=dm}
Let $A$ be a Banach algebra with a cai and that is 
nondegenerately represented on a Banach space $E$ via 
$\pi \colon A \to \Li(E)$. 
Then the Banach algebra $M(A)$ is isometrically isomorphic to 
\[
\{ t \in \Li(E) \colon t\pi(A) \subseteq \pi(A), \pi(A)t \subseteq  \pi(A)\},
\] 
the algebra of two sided multipliers of $\pi(A)$, via the map 
$\widehat{\pi}$ from Theorem \ref{ExtAMA}. 
\end{corollary}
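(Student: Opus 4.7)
The plan is to leverage Theorem \ref{ExtAMA}, which already supplies an isometric homomorphism $\widehat{\pi}\colon M(A)\to\Li(E)$ (isometric because $\pi$ is, by the last sentence of that theorem). What remains is to identify the range of $\widehat{\pi}$ as exactly the two-sided multipliers of $\pi(A)$, which then automatically makes the isomorphism isometric.

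For the inclusion showing that the image lies in the two-sided multipliers, I would first recall the identities
\[
(L,R)\,\iota_A(a) = \iota_A(L(a)) \qquad\text{and}\qquad \iota_A(a)\,(L,R) = \iota_A(R(a)),
\]
valid for all $(L,R)\in M(A)$ and $a\in A$, which are exactly the ones verified at the end of the proof of Lemma \ref{iotaM}. Applying the multiplicative map $\widehat{\pi}$ and using the factorization $\pi = \widehat{\pi}\circ\iota_A$ gives
\[
\widehat{\pi}((L,R))\,\pi(a) = \pi(L(a)) \qquad\text{and}\qquad \pi(a)\,\widehat{\pi}((L,R)) = \pi(R(a)).
\]
Hence $t:=\widehat{\pi}((L,R))$ satisfies $t\pi(A)\subseteq\pi(A)$ and $\pi(A)t\subseteq\pi(A)$.

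For surjectivity, given a two-sided multiplier $t\in\Li(E)$ of $\pi(A)$, the fact that $\pi$ is isometric (hence injective) allows me to define $L,R\colon A\to A$ unambiguously by the relations $\pi(L(a))=t\pi(a)$ and $\pi(R(a))=\pi(a)t$. A routine check confirms that $L$ is a bounded left multiplier (from $\pi(L(ab))=t\pi(a)\pi(b)=\pi(L(a))\pi(b)=\pi(L(a)b)$), $R$ is a bounded right multiplier, and the centralizer condition $aL(b)=R(a)b$ follows from applying $\pi$ and invoking the associativity $\pi(a)(t\pi(b))=(\pi(a)t)\pi(b)$. Thus $(L,R)\in M(A)$. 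To verify that $\widehat{\pi}((L,R))=t$, I compute on the dense subspace $\pi(A)E$ using the identity already derived above:
\[
\widehat{\pi}((L,R))\,\pi(a)\xi = \pi(L(a))\xi = t\pi(a)\xi,
\]
so the two operators agree on a dense subspace and hence everywhere by continuity.

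The main subtlety is the need for $\pi$ to be isometric (not merely contractive, as in Theorem \ref{ExtAMA}), since the injectivity of $\pi$ is precisely what makes the definitions of $L$ and $R$ in the surjectivity step unambiguous; this is also what forces $\widehat{\pi}$ to be isometric and hence produces the isometric isomorphism onto the subalgebra of two-sided multipliers. Beyond this point, one has only to be mindful of which multiplicative structure (in $\Li(A)$ versus $\Li(A)^{\op{op}}$) is in play when transporting products in $M(A)$ through $\widehat{\pi}$, after which the argument is essentially a diagram chase.
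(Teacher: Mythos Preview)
Your proof is correct and follows essentially the same route as the paper: both use Theorem~\ref{ExtAMA} to obtain the isometric $\widehat{\pi}$, verify $\widehat{\pi}(M(A))$ lands in the two-sided multipliers via the identities $\widehat{\pi}(L,R)\pi(a)=\pi(L(a))$ and $\pi(a)\widehat{\pi}(L,R)=\pi(R(a))$, and then invert $\pi$ on $\pi(A)$ to build a preimage $(L,R)$ of a given two-sided multiplier $t$, checking $\widehat{\pi}(L,R)=t$ on the dense subspace $\pi(A)E$. Your write-up is slightly more explicit about why $(L,R)\in M(A)$ and about the role of isometry/injectivity of $\pi$, but the argument is the same.
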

\begin{proof}
 Theorem \ref{ExtAMA} gives that $\widehat{\pi} \colon M(A) \to \Li(E)$ is an isometric and unital representation satisfying $\widehat{\pi} \circ \iota_A = \pi$.
Set 
\[
B=\{ t \in \Li(E) \colon t\pi(A)\subseteq \pi(A), \pi(A)t \subseteq \pi(A)\}. 
\]
It only remains to show that the image of $\widehat{\pi}$ is indeed $B$.
To do so, notice that for any $(L,R) \in M(A)$ and $a \in A$ we have $\widehat{\pi}(L,R)\pi(a)= \pi(L(a))$ and $\pi(a)\hat{\pi}(L,R)= \pi(R(a))$, whence $\widehat{\pi}(M(A)) \subseteq B$. To check the reverse inclusion, for any $t \in B$ we define $L_t, R_t \in \Li(A)$ as follows:
\[
L_t(a)=\pi^{-1}(t\pi(a)), \ R_t(a)=\pi^{-1}(\pi(a)t), 
\]
where $\pi^{-1} \colon \pi(A) \to A$ is understood as the inverse of the invertible isometry $\pi \colon A \to \pi(A)$. 
Since $t \in B$, we have $(L_t, R_t) \in M(A)$. Furthermore, for any 
$a \in A$, $\xi \in E$ we have 
\[
\widehat{\pi}(L_t, R_t)\pi(a)\xi = \pi(L_t(a))\xi = t\pi(a)\xi.
\]
Hence, nondegeneracy implies that $\widehat{\pi}(L_t, R_t)=t$ and therefore 
we conclude that $B \subseteq \widehat{\pi}(M(A))$, finishing the proof.
\end{proof}

We will be mostly interested in $M(A)$ when $A$ is a Banach algebra that has a non-unital identity as in 
Definition \ref{ident&unit}. This implies that $A$ does not have 
a cai, as shown in Lemma \ref{noCAI}, whence the previous two results cannot be used. 
However, the following proposition shows that in such cases $M(A)$  is 
actually the Banach algebra $A$ equipped with a new norm.  

\begin{proposition}\label{MAidentity}
Let $A$ be a Banach algebra with identity $1_A \in A$, as in Definition \ref{ident&unit}, 
and let $\iota_A \colon A \to M(A)$ be as in Definition \ref{iotaA}. 
Then the map $\varphi \colon M(A) \to A$ given by 
\[
\varphi(L,R)=L(1_A)
\]
is an isometric isomorphism between the Banach algebras $M(A)$ and $(A, \opnorm{-})$, where $\opnorm{a}=\|\iota_A(a)\|$. In particular, $\opnorm{a}=\|a\|$ for all $a \in A$ when $1_A$ is a unit.
\end{proposition}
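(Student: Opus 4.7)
The plan is to show that when $A$ has an identity $1_A$, the map $\iota_A$ from Definition \ref{iotaA} is already a bijection onto $M(A)$; the proposition then becomes essentially the statement that $\varphi$ is its inverse, with $\opnorm{\cdot}$ the transported norm. The argument is almost entirely algebraic, with the norm comparison at the end for the case $\|1_A\|=1$.

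First, I would exploit the identity to prove that any double centralizer $(L,R) \in M(A)$ has the form $\iota_A(x)$ for a unique $x \in A$. For any $a \in A$, the left multiplier property gives
\[
L(a) = L(1_A \cdot a) = L(1_A)\, a,
\]
and symmetrically $R(a) = a\, R(1_A)$. Setting $a=b=1_A$ in the double centralizer condition $aL(b) = R(a)b$ yields $L(1_A) = R(1_A)$; call this common element $x$. Then $L=L_x$ and $R=R_x$, so $(L,R)=\iota_A(x)$. This shows $\iota_A$ is surjective. Injectivity is immediate: if $\iota_A(a)=0$, then $a = L_a(1_A) = 0$. Combined with multiplicativity (verified already in the proof of Lemma \ref{iotaM}, which does not use a cai), $\iota_A \colon A \to M(A)$ is an algebra isomorphism.

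Next, I would observe that the map $\varphi$ defined by $\varphi(L,R) = L(1_A)$ is precisely $\iota_A^{-1}$. Indeed, $\varphi(\iota_A(a)) = L_a(1_A) = a$, and conversely $\iota_A(\varphi(L,R)) = \iota_A(x) = (L_x,R_x) = (L,R)$ by the first step. Therefore $\varphi$ is an algebra isomorphism, and since $\opnorm{a}$ is defined as $\|\iota_A(a)\|$, the map $\varphi$ is automatically isometric between $(M(A),\|\cdot\|)$ and $(A, \opnorm{\cdot})$.

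Finally, for the case when $\|1_A\|=1$, I would show $\opnorm{a}=\|a\|$. The inequality $\|L_a\| = \sup_{\|b\|\leq 1}\|ab\| \leq \|a\|$ is automatic, and the choice $b=1_A$ gives $\|L_a(1_A)\| = \|a\|$ with $\|1_A\|=1$, so $\|L_a\|=\|a\|$; symmetrically $\|R_a\|=\|a\|$. Since $M(A)$ carries the supremum norm inherited from $\Li(A) \times \Li(A)^{\op{op}}$, we conclude $\opnorm{a}=\max(\|L_a\|,\|R_a\|)=\|a\|$. The main substantive step is the one that forces $L(1_A)=R(1_A)$ and collapses a general double centralizer to left/right multiplication by a single element; everything else is bookkeeping.
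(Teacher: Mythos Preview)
Your proof is correct and follows essentially the same approach as the paper's: both recognize that every double centralizer collapses to $(L_x,R_x)$ for $x=L(1_A)$, making $\varphi$ the inverse of $\iota_A$. The only cosmetic differences are that you first establish $\iota_A$ is bijective and then identify $\varphi=\iota_A^{-1}$, whereas the paper verifies injectivity, surjectivity, and multiplicativity of $\varphi$ directly; and for the case $\|1_A\|=1$ you argue directly with $b=1_A$, while the paper invokes Lemma~\ref{iotaM} via the constant cai $(1_A)$.
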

\begin{proof}
It is routine to check that $\varphi \colon M(A) \to A$  is a linear map. 
Further, it is clear that $\varphi(\iota_A(a)) =a$, 
whence $\varphi$ is surjective. If $\varphi(L,R)=0$, 
then for any $a \in A$ we get $L(a)=L(1_Aa)=L(1_A)a=\varphi(L,R)a=0$ and 
$R(a)=R(a)1_A=aL(1_A)=a\varphi(L,R)=0$, from where it follows that $\varphi$
is injective. Next, to show that $\varphi$ is multiplicative we
take any $(L, R), (L', R') \in M(A)$, and compute
\begin{align*}
\varphi\big( (L,R)(L',R') \big) & = \varphi( LL', R'R ) \\
& = L(L'(1_A)) \\
&=L(1_AL'(1_A))\\
&=L(1_A)L'(1_A) = \varphi(L,R)\varphi(L',R').
\end{align*}
Next, for any $(L,R) \in M(A)$ we obtain
 \[
 \opnorm{\varphi(L,R)}= \| \iota_A(L(1_A))\| = \| (L_{L(1_A)}, R_{L(1_A)})\| =\|(L,R)\|, 
 \]
so that $\varphi \colon M(A) \to A$ is indeed isometric when $A$ is equipped with $\opnorm{a}=\|\iota_A(a)\|$. Finally,
if $1_A$ is a unit, then $A$ has a trivial cai and therefore Lemma \ref{iotaM} implies that $\iota_A$ is an isometry.
\end{proof}

\begin{remark}\label{comM(A)}
Section 2 of \cite{wang1961} defines the multiplier algebra of a commutative Banach 
algebra $A$ without absolute zero divisors other than $0$ by 
\[
N(A)=\{ L \in \Li(A) \colon aL(b)=L(a)b \ \text{ for all } \ a,b \in A\}.
\] 
When $A$ is commutative and $(L,R) \in M(A)$, we have $L=R$ and 
$(L,R) \mapsto L$ is an isometric algebra isomorphism from $M(A)$ to $N(A)$. Hence, both definitions are equivalent. If $A$ is a commutative 
Banach algebra with identity $1_A \in A$ as in Definition \ref{ident&unit}, then $A$ has no absolute zero divisors other than $0$ and therefore Proposition \ref{MAidentity} above implies that $M(A)$ is isometrically isomorphic, as a Banach algebra, to $A$ equipped with the norm $a \mapsto \| L_a\|$. 
\end{remark}

\section{Main results}\label{MainR}

 In this section, we explore whether the conditions in Theorem \ref{ExtAMA} are necessary to guarantee that the multiplier algebra of an $L^p$-operator algebra is again an $L^p$-operator algebra. We do so via two contrasting examples presented in Subsections \ref{UTM} and \ref{AugId}.  On the one hand, we show that the multiplier algebra of $2 \times 2$ strictly upper triangular matrices acting on $\ell_2^p$ is an $L^q$-operator algebra for any $p, q \in [1,\infty)$. On the other hand, for each $p \in [1, \infty)$ we define a two dimensional $L^p$-operator algebra whose multiplier algebra is not an $L^q$-operator algebra for any $q\in[1,\infty)$ as long as $p \in [1,p_0] \cup [p_0',\infty)$ where $p_0=1.606$ and $p_0'$ is its Hölder conjugate given by $p_0'=\frac{p_0}{p_0-1}=2.65016501\ldots=2.\cj{6501}$. 

\subsection{Approximately Unital and Nondegenerate Case}

The following result is simply a consequence of Theorem \ref{ExtAMA} being a particular case of Corollary \ref{M(A)=dm}.

\begin{theorem}\label{M(A)isLp}
Let $A$ be an $L^p$-operator algebra that has a cai and such that 
there is a nondegenerate representation $\pi$ of $A$ on $L^p(\mu)$. 
Then $M(A)$ is an $L^p$-operator algebra that is isometrically 
isomorphic to the algebra of two sided multipliers of $\pi(A)$, that is $M(A) \cong \{ t \in L^p(\mu) \colon t\pi(a) \in \pi(A), \pi(a)t \in \pi(A) \text{ for all } a \in A\}$.
\end{theorem}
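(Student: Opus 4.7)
The plan is to observe that Theorem \ref{M(A)isLp} is an immediate specialization of Corollary \ref{M(A)=dm} to the case where the ambient Banach space happens to be an $L^p$-space. The hypotheses assert that $A$ is a Banach algebra with a cai and that it admits an isometric nondegenerate representation $\pi \colon A \to \Li(L^p(\mu))$ (here one should read the phrase ``nondegenerate representation'' in the theorem as meaning isometric and nondegenerate, in line with the notion of ``nondegenerately representable'' introduced in Section 2, since otherwise one cannot conclude that $M(A)$ is itself an $L^p$-operator algebra). This is exactly the input required by Corollary \ref{M(A)=dm}.

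First, I would feed $\pi$ into Theorem \ref{ExtAMA} to produce the unique nondegenerate, contractive, unital extension $\widehat{\pi} \colon M(A) \to \Li(L^p(\mu))$ satisfying $\widehat{\pi} \circ \iota_A = \pi$; because $\pi$ is isometric, the last clause of Theorem \ref{ExtAMA} guarantees that $\widehat{\pi}$ is isometric as well. This map is, on its own, an isometric representation of $M(A)$ on $L^p(\mu)$, so Definition \ref{DefnLpOA} yields immediately that $M(A)$ is an $L^p$-operator algebra.

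Next, I would quote Corollary \ref{M(A)=dm} to identify the image $\widehat{\pi}(M(A))$ with the two-sided multiplier algebra
\[
\{\, t \in \Li(L^p(\mu)) \colon t\pi(A) \subseteq \pi(A),\ \pi(A)t \subseteq \pi(A)\,\}.
\]
Combined with the fact that $\widehat{\pi}$ is isometric, this gives the claimed isometric isomorphism and completes the proof.

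As already indicated in the sentence preceding the theorem, there is essentially no obstacle to overcome: all the substantive work is encoded in the construction of $\widehat{\pi}$ in Theorem \ref{ExtAMA} and in the identification of its range carried out in Corollary \ref{M(A)=dm}. The only point worth pausing over is the one flagged above, namely to make sure that the ``nondegenerate representation'' $\pi$ appearing in the hypothesis is isometric, so that the extension $\widehat{\pi}$ is isometric rather than merely contractive; without this the implication ``$A$ is an $L^p$-operator algebra'' $\Rightarrow$ ``$M(A)$ is an $L^p$-operator algebra'' would not follow from the previous results.
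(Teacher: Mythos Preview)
Your proposal is correct and matches the paper's approach exactly: the paper states that Theorem \ref{M(A)isLp} is simply a consequence of Theorem \ref{ExtAMA} and Corollary \ref{M(A)=dm}, and you have unpacked precisely that reduction. Your remark that $\pi$ must be read as isometric is well taken and consistent with how the paper uses the result.
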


Theorem \ref{M(A)isLp} raises a natural question: Are the properties of 
having a cai and being nondegenerately represented necessary for $M(A)$
to be an $L^p$-operator algebra? Below, we present two examples of families of $L^p$-operator algebras that have neither property. In Subsection \ref{UTM}, we investigate strictly upper triangular $2 \times 2$ matrices and show its multiplier algebra
can be nondegenerately represented on an $L^p$-space. In Subsection \ref{AugId}, we investigate the group $L^p$-operator algebras associated to the augmentation ideal $\ell_0^1(\zn\slash 3\zn)$ and show that for certain values of $p$,  its multiplier algebra fails to be an $L^q$-operator algebra for any $q \in [1, \infty)$.  

\subsection{Strictly upper triangular matrices}\label{UTM}
Let $p \in [1, \infty)$. We start by considering $T_2^p$, the algebra of 
strictly upper triangular $2 \times 2$ matrices acting on $\ell_2^p$, that is,
\[
T_2^p = 
\left\{
\begin{pmatrix}
0 & z \\
0 & 0
\end{pmatrix}
\colon z \in \C \right\} \subset M_2^p(\C) = \Li(\ell_2^p).
\]
This is a degenerate $L^p$-operator algebra without identity, 
unit, nor cai. 
In fact, $T_2^p$ is isometrically isomorphic to $\C$ with the trivial multiplication $zw=0$
(see Example 3.6 in \cite{Gard21}). Notice that $T^p_2$ is commutative, but all its elements are absolute 
zero divisors, whence Remark \ref{comM(A)} does not apply. However, it is clear that 
$\Li(T_2^p)=\Li(\C)\cong \C$ and that $aL(b)=0=R(a)b$ for any $(L,R) \in M(T_2^p)$ and any 
$a,b \in T_2^p$. This, together with Definition \ref{multAlg}, imply at once that 
\[
M(T_2^p)= LM(T_2^p) \times  RM(T_2^p).
\] 
Furthermore, notice that $LM(T_2^p)=RM(T_2^p)=\Li(T_2^p)\cong \C$.
Therefore, $M(T_2^p)$ is isometrically isomorphic to $\C^2$ with the supremum norm, which is in turn isometrically identified with $C(\{1,2\})$, the space of continuous functions on $\{1,2\}$. 
Finally, for any $q \in [1, \infty)$, we have the map $\varphi_q \colon C(\{1,2\}) \to  \Li(\ell_2^q)$ given by 
\[
(\varphi_q(\xi)x)(j)=\xi(j)x(j),
\]
where $\xi \in C(\{1,2\})$, $x \in \ell_2^q$, and $j \in \{1,2\}$. 
This allows us to represent continuous functions on $\{1,2\}$ as multiplication operators
on $\ell_2^q$. It is well known and easy to see that $\varphi_q$ is an isometric nondegenerate representation. Therefore, we have shown the following result:
\begin{theorem}\label{MT2p}
For any $p \in [1, \infty)$, $M(T_2^p)$ is isometrically  isomorphic to $\C^2$ with the supremum norm. In particular, for any $p, q \in [1, \infty)$, $M(T_2^p)$ is an $L^q$-operator algebra that is nondegenerately representable on $\ell_2^q$.
\end{theorem}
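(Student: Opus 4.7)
The plan is that the entire argument is essentially laid out in the discussion just preceding the theorem statement, so I would only need to verify the remaining identifications carefully and produce the required representation on $\ell_2^q$.

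First, I would make explicit the identification of $T_2^p$ with $\C$ equipped with the zero multiplication. Every strictly upper triangular $2\times 2$ matrix is determined by its $(1,2)$-entry, and the operator norm on $\ell_2^p$ of such a matrix equals the modulus of that entry, so the map sending the matrix to its entry is an isometric linear isomorphism; under this identification the product of any two matrices is zero. In particular, $\Li(T_2^p) \cong \Li(\C) \cong \C$ canonically and isometrically.

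Second, I would verify that every pair $(L,R) \in \Li(T_2^p) \times \Li(T_2^p)$ is automatically a double centralizer. Given any $a,b \in T_2^p$, the product $ab$ is $0$ and so is $L(a)b$ and $aR(b)$, hence the conditions $L(ab)=L(a)b$, $R(ab)=aR(b)$, and $aL(b)=R(a)b$ all reduce to $0=0$. Therefore $LM(T_2^p) = RM(T_2^p) = \Li(T_2^p)$ and
\[
M(T_2^p) = LM(T_2^p)\times RM(T_2^p) \cong \C\times \C,
\]
with the norm on $M(T_2^p)$, inherited from $\Li(T_2^p)\times \Li(T_2^p)^{\op{op}}$ with the supremum norm, corresponding to the supremum norm on $\C^2$. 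The algebra multiplication on $M(T_2^p)$ transports to componentwise multiplication on $\C^2$ because $(L,R)(L',R') = (LL',R'R)$ and each pair of compositions in $\Li(T_2^p)\cong \C$ is pointwise multiplication in $\C$.

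Third, to obtain the $L^q$-operator algebra structure, I would use the standard identification of $\C^2$ with the continuous functions $C(\{1,2\})$ on a two-point space and represent this as multiplication operators on $\ell_2^q$ via the map $\varphi_q$ defined by $(\varphi_q(\xi)x)(j)=\xi(j)x(j)$ for $\xi \in C(\{1,2\})$, $x \in \ell_2^q$, and $j \in \{1,2\}$. A short check shows that $\varphi_q$ is multiplicative, isometric (since $\|\varphi_q(\xi)\|=\max\{|\xi(1)|,|\xi(2)|\}$), and nondegenerate (since the image contains the identity, and in fact the rank one projections $\varphi_q(\delta_1)$ and $\varphi_q(\delta_2)$ whose sum is $\op{id}_{\ell_2^q}$). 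I do not foresee any real obstacle here; the only subtlety is simply being careful with the direction of the opposite algebra when transporting the product, but since each factor is $\C$ this plays no role.
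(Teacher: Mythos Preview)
Your proposal is correct and follows essentially the same argument as the paper: identify $T_2^p$ with $\C$ carrying the zero product, observe that every pair in $\Li(T_2^p)\times\Li(T_2^p)^{\op{op}}$ is automatically a double centralizer so that $M(T_2^p)\cong\C^2$ with the supremum norm, and then represent $C(\{1,2\})$ on $\ell_2^q$ by multiplication operators via $\varphi_q$. Your extra remark about the opposite-algebra convention is a nice sanity check, though as you note it is vacuous here since both factors are $\C$.
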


A more complicated example is $T_3^p$, the algebra of 
strictly upper triangular $3 \times 3$ matrices acting on $\ell_3^p$. That is 
\[
T_3^p = 
\left\{
\begin{pmatrix}
0 & a_{12} & a_{13} \\
0 & 0 & a_{23}\\
0 & 0 & 0
\end{pmatrix}
\colon a_{12}, a_{13}, a_{23} \in \C \right\} \subset M_3^p(\C) = \Li(\ell_3^p),
\]
Notice that $T_3^p$ is identified with $\C^3$ with multiplication given by 
\[
ab=(a_{12} , a_{13}, a_{23} )(b_{12} , b_{13}, b_{23}) = (0, a_{12} b_{23}, 0 ).
\]
If $p=1$, then $T_3^p$ is isometrically isomorphic to 
$\C^3$ with norm 
\[
\| a \| = \op{max} \{|a_{12}|,|a_{13}|+ |a_{23}|\}.
\]
However, when $p \neq 1,2,\infty$ it is known that computing the $p$-operator norm of a matrix is an NP-hard problem (see \cite{HendOlsh2010}). This complicates the study 
of $M(T_3^p)$ and at this moment we have not investigated whether $M(T_3^p)$
can be isometrically represented on an $L^q$-space. 

\subsection{Augmentation Ideals}\label{AugId}

We will now construct, for certain values of $p$, a class of $L^p$-operator algebras
that have no cai, cannot be nondegenerately represented (on any Banach space), 
and whose multiplier algebra is not an $L^q$-operator algebra
for any $q\in [1, \infty)$. We begin with the general construction of the augmentation ideal for an arbitrary discrete group $G$ and eventually specialize to the finite case. In particular we will work with $G=\zn\slash 3\zn$. The augmentation ideal of a group ring is a well known algebraic object (see \cite[Section 11.2]{Johnson97}, \cite[Example 7, p. 245]{DummitFoote} for instance) and is easily generalized to operator algebras arising from discrete groups. We include details here for completeness.

Let $G$ be a discrete group with identity element 
denoted by $1_G \in G$. For any $g \in G$, the indicator function $\delta_g \colon G \to \C$ 
is given by
\[
\delta_{g}(h) = 
\begin{cases}
1 & \text{ if } g=h\\
0 & \text{ if } g \neq h\\
\end{cases}.
\]
For each $g \in G$, we define
the function $\Delta_g \colon G \to \C$ by 
\begin{equation}\label{DeltaCap}
\Delta_g = \delta_g - \delta_{1_G}.
\end{equation}
For each $p \in [1, \infty)$, recall that the Banach space  $\ell^p(G)$
is the set of complex valued functions $a\colon G \to \C$ satisfying 
\[
\|a\|_p^p= \sum_{g \in G} |a(g)|^p < \infty.
\]
Further, we define the convolution of any 
pair $(a, b) \in \ell^1(G) \times \ell^p(G)$ 
as the complex valued function
$a*b \colon G \to \C$ 
defined at each $g \in G$ by
\begin{equation}\label{convo}
(a*b)(g) = \sum_{h \in G} a(h)b(h^{-1}g).
\end{equation}
It is well known that $\| a*b \|_p \leq \|a\|_1\|b\|_p$, 
whence $a*b \in \ell^p(G)$. Moreover, when $p=1$, convolution gives rise to a multiplication on $\ell^1(G)$ 
that makes it a unital Banach algebra with unit $\delta_{1_G}$.

\begin{remark}\label{convNOT}
Given the large presence of multiplication by convolution in this paper, 
when $a \in \ell^1(G)$ and $b \in \ell^p(G)$ for some $p \in [1, \infty)$
we will simply write $ab$ instead of $a*b$ from Equation \eqref{convo}. It will  be understood 
by context when convolution is the multiplication that is being used. 
\end{remark}

It is also well known that
$C_c(G)$, the subspace of compactly supported 
functions on $G$, lies densely in $\ell^1(G)$ with respect to the 
$\|-\|_1$ norm. 
Clearly, $\delta_g \in C_c(G)$ for each $g \in G$. 

\begin{lemma}\label{iota0hom}
The map $\iota_0 \colon \ell^1(G) \to \C$ given by 
\begin{equation}\label{iota0}
\iota_0(a) = \sum_{g \in G} a(g)
\end{equation}
is a contractive algebra homomorphism. 
\end{lemma}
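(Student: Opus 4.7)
The plan is to verify the three properties in order: well-definedness plus contractivity, linearity, and then multiplicativity with respect to convolution. The first two are essentially bookkeeping and I would dispatch them quickly at the start.

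For contractivity and well-definedness, I would observe that for any $a \in \ell^1(G)$,
\[
|\iota_0(a)| = \Bigl| \sum_{g \in G} a(g) \Bigr| \leq \sum_{g \in G} |a(g)| = \|a\|_1 < \infty,
\]
which shows simultaneously that the defining series converges absolutely and that $\iota_0$ is contractive. Linearity of $\iota_0$ is then immediate from the fact that the sum $\sum_{g \in G} a(g)$ is linear in $a$ on each finite subsum, and absolute convergence of both series allows the passage to the full group sum.

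The substantive step is multiplicativity with respect to the convolution product of Equation \eqref{convo}. For $a, b \in \ell^1(G)$, I would compute
\[
\iota_0(ab) = \sum_{g \in G} (a \ast b)(g) = \sum_{g \in G} \sum_{h \in G} a(h) b(h^{-1}g).
\]
The key move is to interchange the two sums, which is justified by Fubini/Tonelli applied to counting measure on $G \times G$, since $\sum_{g,h} |a(h)||b(h^{-1}g)| = \|a\|_1 \|b\|_1 < \infty$. After swapping, I would substitute $k = h^{-1}g$ in the inner sum, using that $g \mapsto h^{-1}g$ is a bijection of $G$ for each fixed $h$, to obtain
\[
\sum_{h \in G} a(h) \sum_{k \in G} b(k) = \iota_0(a)\, \iota_0(b).
\]

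The only real obstacle is justifying the interchange of summation, but this is exactly the classical absolute convergence argument used to show that convolution on $\ell^1(G)$ is associative and that $\ell^1(G)$ is a Banach algebra, so no new difficulty arises. No further nontrivial ingredients are needed: the entire lemma follows from the triangle inequality and Fubini for counting measures.
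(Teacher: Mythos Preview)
Your proof is correct and follows essentially the same approach as the paper: both establish contractivity via the triangle inequality and multiplicativity by applying Fubini to swap the sums and then using the bijection $g \mapsto h^{-1}g$. Your version is slightly more explicit in justifying the interchange of summation, but otherwise the arguments are identical.
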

\begin{proof}
It is clear that $\iota_0$ is a well-defined linear map and that $|\iota_0(a)| \leq \|a\|_1$, 
whence $\| \iota_0 \| \leq 1$. 
By Fubini's Theorem, we see that
\[
\iota_0(ab)=\sum_{g \in G} (ab)(g) = \sum_{h \in G} a(h)\sum_{g \in G} b(h^{-1}g) = \iota_0(a)\iota_0(b),
\]
finishing the proof. 
\end{proof}

The main object of interest for this subsection is the augmentation 
ideal of $\ell^1(G)$, which is defined as 
the kernel of the map $\iota_0 \colon \ell^1(G) \to \C$ from 
equation \eqref{iota0}. We record a precise definition below. 

\begin{definition}\label{AugmentationIdealDef}
Let $G$ be a discrete group. The \emph{augmentation ideal} of $\ell^1(G)$ 
is defined as 
\[
\ell^1_0(G) = \ker(\iota_0) = \left\{ a \in \ell^1(G) \colon \iota_0(a)=0\right\}.
\]
Similarly, we define $\C_0[G] \subseteq C_c(G)$ as 
\[
\C_0[G] = \left\{ a \in C_c(G) \colon \iota_0(a)=0\right\}.
\]
\end{definition}

Next we establish the main properties 
of $\ell^1_0(G)$, which we present as a series of Propositions. 

\begin{proposition}\label{PropDenseloAUG}
Let $G$ be a discrete group.
\begin{enumerate}
\item  The augmentation ideal $\ell_0^1(G)$
is a closed two-sided proper ideal of $\ell^1(G)$. \label{tsideal}
\item The subspace $\C_0[G]$ is dense in $\ell_0^1(G)$  with respect to the 
$\|-\|_1$ norm. \label{c0dense}
\item Let $\Delta_g$ be
as in \eqref{DeltaCap}. Then $\op{span}\{ \Delta_g \colon g \in G\}$ is 
dense in $\ell_0^1(G)$. In fact, $\op{span}\{ \Delta_g \colon g \in G\}=\C_0[G]$. \label{Deltadense}
\end{enumerate}
\end{proposition}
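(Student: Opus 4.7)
The plan is to treat the three parts in order, each leveraging the previous one, and to exploit throughout that $\iota_0$ is a contractive algebra homomorphism (Lemma \ref{iota0hom}) together with the known density of $C_c(G)$ in $\ell^1(G)$.

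For part \eqref{tsideal}, I would simply observe that $\ell_0^1(G) = \ker(\iota_0)$ and that $\iota_0$ is bounded, so $\ell_0^1(G)$ is automatically a closed subspace. To obtain the two-sided ideal property I would use multiplicativity: if $a \in \ell_0^1(G)$ and $b \in \ell^1(G)$, then $\iota_0(ab) = \iota_0(a)\iota_0(b) = 0$ and similarly $\iota_0(ba) = 0$, so both $ab, ba \in \ell_0^1(G)$. Properness is immediate because $\iota_0(\delta_{1_G}) = 1$, so $\delta_{1_G} \notin \ell_0^1(G)$.

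For part \eqref{c0dense}, the key idea is to take an arbitrary approximating sequence from $C_c(G)$ and correct it by a single scalar multiple of $\delta_{1_G}$ so it lands in $\C_0[G]$. More precisely, given $a \in \ell_0^1(G)$ and $\varepsilon > 0$, I would choose $a' \in C_c(G)$ with $\| a - a'\|_1 < \varepsilon/2$ (possible since $C_c(G)$ is dense in $\ell^1(G)$). Then set
\[
b = a' - \iota_0(a')\delta_{1_G}.
\]
Since $\delta_{1_G} \in C_c(G)$, we get $b \in C_c(G)$, and $\iota_0(b) = \iota_0(a') - \iota_0(a') = 0$, so $b \in \C_0[G]$. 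Because $\iota_0(a) = 0$, the contractivity of $\iota_0$ (and $\| \delta_{1_G}\|_1 = 1$) gives
\[
\|a - b\|_1 \leq \|a - a'\|_1 + |\iota_0(a')|\, \|\delta_{1_G}\|_1 = \|a - a'\|_1 + |\iota_0(a' - a)| \leq 2\|a - a'\|_1 < \varepsilon.
\]

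For part \eqref{Deltadense}, I would first show the equality $\op{span}\{ \Delta_g \colon g \in G\} = \C_0[G]$ and then invoke \eqref{c0dense}. The inclusion $\subseteq$ is clear because each $\Delta_g = \delta_g - \delta_{1_G}$ is compactly supported and satisfies $\iota_0(\Delta_g) = 1 - 1 = 0$. For the reverse inclusion, let $a \in \C_0[G]$ be supported on a finite set $F \subseteq G$. Writing $a = \sum_{g \in F} a(g)\delta_g$ and using $\iota_0(a) = \sum_{g \in F} a(g) = 0$, I can subtract off a vanishing term:
\[
a \;=\; \sum_{g \in F} a(g)\delta_g \;-\; \Bigl(\sum_{g \in F} a(g)\Bigr)\delta_{1_G} \;=\; \sum_{g \in F} a(g)(\delta_g - \delta_{1_G}) \;=\; \sum_{g \in F} a(g)\Delta_g,
\]
placing $a$ in the span of the $\Delta_g$'s. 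The density claim then follows immediately from part \eqref{c0dense}.

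There is no real obstacle here; the only slightly non-obvious step is the correction trick in \eqref{c0dense}, which is needed because truncating $a$ in $C_c(G)$ typically destroys the augmentation condition and one must add back a single Dirac mass at $1_G$ to restore $\iota_0 = 0$.
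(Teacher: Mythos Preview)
Your proof is correct and follows essentially the same approach as the paper's own proof: the same kernel-of-a-homomorphism argument for part~\eqref{tsideal}, the same ``correct by $\iota_0(a')\delta_{1_G}$'' trick for part~\eqref{c0dense}, and the same subtraction of the vanishing sum $\iota_0(a)\delta_{1_G}$ for part~\eqref{Deltadense}. The only cosmetic difference is notation (you call the approximant $a'$ where the paper calls it $b$).
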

\begin{proof}
That $\ell_0^1(G)$ is a closed two-sided ideal follows
from Lemma \ref{iota0hom} above. Since $\delta_{1_G} \not\in \ell_0^1(G)$, 
we also get that $\ell_0^1(G)$ is proper, so part (\ref{tsideal}) is done. 
For part (\ref{c0dense}), let $a \in \ell_0^1(G)$ and let $\varepsilon>0$. 
Then, there is $b \in C_c(G)$ such that $\| a-b \|_1 < \frac{\varepsilon}{2}$ 
and observe that $b-\iota_0(b)\delta_{1_G} \in \C_0[G]$. Further, note that 
 $|\iota_0(b)|=|\iota_0(a-b)| \leq \| a-b \|_1$. 
Therefore, 
\[
\| a - (b-\iota_0(b)\delta_{1_G} ) \|_1 \leq \|a-b\|_1 + |\iota_0(b)| \leq 2  \|a-b\|_1 < \varepsilon.
\]
This proves that $\C_0[G]$ is dense in  $\ell_0^1(G)$, as wanted. Finally, for part (\ref{Deltadense}), note first that 
$\op{span}\{ \Delta_g \colon g \in G\} \subseteq \C_0[G]$ because for each $g \in G$, it is clear 
that $\Delta_g \in C_c(G)$ and $\iota_0(\Delta_g)=\iota_0(\delta_g)-\iota_0(\delta_{1_G})=1-1=0$. 
For the reverse inclusion, take any $a \in \C_0[G]$, that is $\iota_0(a)=0$ and $F=\op{supp}(a)$ is a finite subset 
of $G$. Therefore, 
\[
a=\sum_{g \in F} a(g)\delta_g =\left( \sum_{g \in F} a(g)\delta_g  \right) -\iota_0(a) \delta_{1_G} = \sum_{g \in F}a(g)\Delta_g.
\]
This proves that $a \in \op{span}\{\Delta_g \colon g \in G\}$ and we are done with part (\ref{Deltadense}). 
\end{proof}

The previous proposition shows that $\ell^1_0(G)$ is a Banach algebra in its own right with the $1$-norm inherited from $\ell^1(G)$. When $p \in [1, \infty)$, we will mainly be interested in the image of $\ell_0^p(G)$
in $F^p(G)$, the full $L^p$ operator algebra of $G$, which we define below in the discrete group case setting. 
We refer the interested reader to Definition 2.1 in \cite{GarThi15} 
for a general definition of $F^p(G)$ for locally compact groups.  

\begin{definition}\label{FullGPS_0}
Let $G$ be a discrete group and let $p \in [1, \infty)$. 
\begin{enumerate}
\item We  define the \emph{full group $L^p$-operator algebra of $G$}, denoted by 
$F^p(G)$, as the completion of $\ell^1(G)$ in the norm
\[
a \mapsto \| a \|_{F^p(G)} = \sup \{ \| \pi(a)\| \colon \pi \in \op{Rep}_{L^p}(G)\},
\]
where $\op{Rep}_{L^p}(G)$ 
is the class of all nondegenerate contractive representations of $\ell^1(G)$ on $L^p$-spaces. 
\item  We define \textit{the augmentation ideal of $F^p(G)$}, denoted by  \label{FullAug}
$F^p_{0}(G)$, as the completion of $\ell_0^1(G)$ in the $\|-\|_{F^p(G)} $ norm.
\end{enumerate} 
\end{definition}

The algebra $F^p(G)$ is in fact an $L^p$-operator algebra, this follows from the general crossed product case, see for instance 
Theorem 3.6 in \cite{ncp2013CP}. We next show that $F^p_{0}(G)$ 
is actually a closed proper ideal in $F^p(G)$, making it also 
an an $L^p$-operator algebra in its own right. 

\begin{proposition}\label{FopPROPER}
Let $G$ be a discrete group and let $p \in [1, \infty)$. Then $F^p_{0}(G)$
is a nonzero closed two-sided proper ideal in $F^p(G)$. 
\end{proposition}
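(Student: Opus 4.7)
The plan is to verify the four properties in the statement---closed, two-sided ideal, proper, and nonzero---by combining Proposition \ref{PropDenseloAUG} with the universal property encoded in the $\|\cdot\|_{F^p(G)}$ norm. Closedness is immediate from Definition \ref{FullGPS_0}(\ref{FullAug}): by construction, $F_0^p(G)$ is the closure of $\ell_0^1(G)$ inside the Banach algebra $F^p(G)$. For the two-sided ideal property, I would take $a \in F_0^p(G)$ and $b \in F^p(G)$, approximate them by sequences $(a_n) \subseteq \ell_0^1(G)$ and $(b_n) \subseteq \ell^1(G)$ converging in the $\|\cdot\|_{F^p(G)}$-norm, and use Proposition \ref{PropDenseloAUG}(\ref{tsideal}) to conclude that $a_nb_n, b_na_n \in \ell_0^1(G) \subseteq F_0^p(G)$ for every $n$. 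Joint continuity of multiplication in $F^p(G)$ together with closedness of $F_0^p(G)$ then delivers $ab, ba \in F_0^p(G)$.

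For properness, the key idea is to extend the augmentation homomorphism $\iota_0 \colon \ell^1(G) \to \C$ from \eqref{iota0} to all of $F^p(G)$. I would view $\iota_0$ as a representation of $\ell^1(G)$ on the one-dimensional $L^p$-space $\ell^p(\{*\}) \cong \C$, where $\iota_0(a)$ acts by scalar multiplication. This representation is nondegenerate (since $\iota_0(\delta_{1_G}) = 1$) and contractive (since $|\iota_0(a)| \leq \|a\|_1$ by Lemma \ref{iota0hom}, and in dimension one the operator norm coincides with the absolute value of the scalar). Hence $\iota_0 \in \op{Rep}_{L^p}(G)$, so by the definition of $\|\cdot\|_{F^p(G)}$ we get $|\iota_0(a)| \leq \|a\|_{F^p(G)}$ for every $a \in \ell^1(G)$. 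Consequently $\iota_0$ extends by continuity to a contractive algebra homomorphism $\widetilde{\iota_0} \colon F^p(G) \to \C$ that vanishes on $\ell_0^1(G)$, and hence by continuity on $F_0^p(G)$. Since $\widetilde{\iota_0}(\delta_{1_G}) = 1 \neq 0$, this shows $\delta_{1_G} \notin F_0^p(G)$ and the ideal is proper.

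Finally, for nonzeroness (where $G$ is implicitly assumed nontrivial, otherwise $\ell_0^1(G) = 0$), I would pick any $g \in G$ with $g \neq 1_G$ and consider $\Delta_g = \delta_g - \delta_{1_G} \in \ell_0^1(G) \subseteq F_0^p(G)$. The left regular representation $\lambda_p \colon \ell^1(G) \to \Li(\ell^p(G))$ belongs to $\op{Rep}_{L^p}(G)$, and evaluating $\lambda_p(\Delta_g)$ on $\delta_{1_G}$ produces $\delta_g - \delta_{1_G} \neq 0$, so $\|\Delta_g\|_{F^p(G)} \geq \|\lambda_p(\Delta_g)\| > 0$. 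The only potentially delicate point in the whole argument is verifying that the trivial/augmentation representation genuinely qualifies as a member of $\op{Rep}_{L^p}(G)$ so that it can be used to bound the $F^p(G)$-norm from below; once this is in place, no further obstacle should arise.
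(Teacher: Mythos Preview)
Your proposal is correct and follows essentially the same approach as the paper: the key step in both is recognizing that $\iota_0$ is a contractive nondegenerate representation of $\ell^1(G)$ on the one-dimensional $L^p$-space $\C$, hence bounded above by $\|\cdot\|_{F^p(G)}$, and therefore extends to $F^p(G)$ while still vanishing on $F_0^p(G)$ and sending $\delta_{1_G}$ to $1$. The paper merely packages the properness argument as a contradiction (assuming $\delta_{1_G}$ is a limit of elements of $\ell_0^1(G)$) and is terser about the closed two-sided ideal and nonzero parts, but the substance is identical; your explicit remark that nonzeroness tacitly requires $G$ to be nontrivial is a fair observation.
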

\begin{proof}
That $F^p_{0}(G)$
is a nonzero closed two-sided ideal follows at once from the 
fact that $\ell_0^1(G)$ is an ideal in $\ell^1(G)$. 
To show that it is proper, it suffices to 
prove that $\delta_{1_G}$, the unit in $F^p(G)$, 
does not belong to $F_0^p(G)$. 
Assume on the contrary that there is a sequence $(a_n)_{n=1}^\infty$ in $\ell_0^1(G)$
such that $a_n$ converges to $\delta_{1_G}$ in $F^p(G)$. 
Next, by Lemma \ref{iota0hom}, $\iota_0$ is a contractive representation of $\ell^1(G)$ on the $L^p$-space $\ell_1^p=\C$, so it follows from definition of $\|-\|_{F^p(G)}$ that 
$|\iota_0(a)|\leq \|a\|_{F^p(G)}$ for all $a \in \ell_0^1(G)$. Therefore 
$\iota_0$ extends continuously to a contraction $\iota_0 \colon F^p(G) \to \C$.
In particular, since $\iota_0(a_n)=0$ for all $n \in \Z_{\geq 1}$, it follows that
\[
1=|\iota_0(\delta_{1_G})| =|\iota_0(\delta_{1_G})-\iota_0(a_n)|   \leq \| \delta_{1_G}-a_n\|_{F^p(G)}.
\]
Taking limit as $n \to \infty$ shows that $1 \leq 0$, a contradiction. Thus, $\delta_{1_G } \not \in F^p_{0}(G)$, as wanted. 
\end{proof}

For each $p \in [1, \infty)$, we will also let $\ell^1_0(G)$ act on $\ell^p(G)$
via left multiplication by convolution. That way the completion of the image
of  $\ell^1_0(G)$ in $\Li(\ell^p(G))$ becomes an $L^p$-operator algebra. In particular, 
when $p=1$, the algebra $\ell^1_0(G)$ is isometrically isomorphic to its image in $\Li(\ell^1(G))$, 
making $\ell^1_0(G)$ an $L^1$-operator algebra. 
More precisely, this will be done by looking at the image of $\ell^1_0(G)$
in $F^p_\op{r}(G)$, the reduced $L^p$-operator algebra of $G$. 
The algebra $F^p_\op{r}(G)$ was
first introduced by Herz in \cite[Section 8]{CSH73}, where it was 
called the algebra of $p$-pseudofunctions on $G$, and then reintroduced by 
Phillips in \cite[Section 3]{ncp2013CP} as a particular case
of reduced $L^p$-operator crossed products. For convenience we
state below the definition of $F^p_\op{r}(G)$ for $G$ discrete. 
We refer the interested reader to Definition 3.1 in \cite{GarThi15}
for the general definition of $F^p_\op{r}(G)$ when $G$ is locally compact. 

\begin{definition}\label{Fp-FP0}
Let $G$ be a discrete group and let $p \in [1, \infty)$.
\begin{enumerate}
\item  Let $\lambda_p \colon \ell^1(G) \to \Li(\ell^p(G))$ denote 
left multiplication by convolution; that is, for $a \in \ell^1(G)$, $b \in \ell^p(G)$, we put $\lambda_p(a)b=ab \in \ell^p(G)$ as in Equation \eqref{convo} (see Remark \ref{convNOT}). We define the \textit{reduced $L^p$-operator algebra of $G$}, denoted by $F^p_\op{r}(G)$, as the closure of $\lambda_p(\ell^1(G))$ in $\Li(\ell^p(G))$. 
\item We define \textit{the augmentation ideal of $F^p_\op{r}(G)$}, denoted by 
$F^p_{\op{r},0}(G)$, as the closure of $\lambda_p(\ell_0^1(G))$ in $\Li(\ell^p(G))$. 
\end{enumerate}
\end{definition}

Since $G$ is discrete, $\|a\|_p \leq \|a\|_1$ for all $a \in \ell^1(G)$, in fact the canonical inclusion $\ell^1(G) \hookrightarrow \ell^p(G)$ has norm 1.
Furthermore, observe that for any $a \in \ell^1(G)$ we have
\begin{equation}\label{Fpnorms_Ineq}
\| a \|_p \leq \| a \|_{F^p_r(G)} \leq \| a \|_{F^p(G)} \leq \|a\|_1.
\end{equation}
Hence, when $p=1$, the map $\lambda_1 \colon \ell^1(G) \to \Li(\ell^1(G))$ is isometric
and therefore we have isometric isomorphisms of Banach algebras 
\begin{equation}\label{Fp=1}
F^1(G) \cong F^1_\op{r}(G)\cong\ell^1(G) \text{ and  } F_0^1(G) \cong F^1_{\op{r},0}(G) \cong \ell_0^1(G).
\end{equation}
For general $p \in [1, \infty)$, the map $\lambda_p \colon 
\ell^1(G) \to \Li(\ell^p(G))$ is an injective contractive nondegenerate representation of 
$\ell^1(G)$ on $\ell^p(G)$. Thus, $\lambda_p(\ell^1(G))$ is a dense subalgebra of $F^p_{\op{r}}(G)$ that is algebraically isomorphic to $\ell^1(G)$. Moreover, by definition 
$F^p_\op{r}(G)$ is a closed unital subalgebra of $\Li(\ell^p(G))$, making it 
an $L^p$-operator algebra as in Definition \ref{DefnLpOA}. 

\begin{remark} 
As in Proposition \ref{FopPROPER}, $F^p_{\op{r},0}(G)$
is a nonzero closed two-sided ideal in $F_{\op{r}}^p(G)$ and therefore an $L^p$-operator algebra. However, 
unlike Proposition \ref{FopPROPER}, we no longer have 
that the ideal $F_{\op{r}, 0}^p(G)$ is proper in general. This is explained due to the fact that 
in many instances $F^p_r(G)$ is a simple Banach algebra. 
Indeed, by Theorem 2.5 in  \cite{phillips2019simplicityreducedgroupbanach}, if $F^2_\op{r}(G)$ is simple, then  $F^p_r(G)$ is simple for any $p \in (1,\infty)$. In particular, for $p \in (1, \infty)$,
$F_{\op{r}}^p(G)$ will be simple when $G$ has the Powers property (see also Theorem 3.7 in \cite{pooya2015simplereducedlpoperator}). 
Thus, we can guarantee that for any $n\in \Z_{>1}$, $F^p_{r,0}(\mathbb{F}_n) =  F^p_r(\mathbb{F}_n)$ where $\mathbb{F}_n$ is the free group in $n$ generators. 
\end{remark}

However, Theorem 3.7 in \cite{GarThi15} shows that, for any $p \in (1,\infty)$, 
if $G$ is an amenable group, then $F^p(G)$ is isometrically isomorphic to $F_{\op{r}}^p(G)$ as 
Banach algebras. 
Therefore, this fact combined with Equation \eqref{Fp=1} give at 
once the following result:

\begin{proposition}\label{AugL1OA}
Let $G$ be a discrete group and let $p \in [1, \infty)$. If either $p=1$ or $G$ is amenable, then $F_{\op{r},0}^p(G)$ is a proper ideal in $F_{\op{r}}^p(G)$ that is degenerately represented on $\ell^p(G)$.
\end{proposition}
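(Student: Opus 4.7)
The plan is to derive both conclusions from Proposition \ref{FopPROPER} by transferring through the Banach algebra isomorphisms supplied by Equation \eqref{Fp=1} in the case $p = 1$ and by the amenable-case identification $F^p(G) \cong F^p_r(G)$ from Theorem 3.7 of \cite{GarThi15}.

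For the proper ideal assertion, when $p = 1$ I would use that Equation \eqref{Fp=1} gives isometric Banach algebra isomorphisms $\ell^1(G) \cong F^1(G) \cong F^1_r(G)$ which carry the augmentation ideals $\ell^1_0(G) \cong F^1_0(G) \cong F^1_{r,0}(G)$; Proposition \ref{FopPROPER} at $p = 1$ guarantees that $F^1_0(G)$ is a nonzero closed proper two-sided ideal of $F^1(G)$, and transporting yields the same for $F^1_{r,0}(G)$ in $F^1_r(G)$. When $G$ is amenable with $p \in (1,\infty)$, Theorem 3.7 of \cite{GarThi15} provides an isometric Banach algebra isomorphism $\Phi \colon F^p(G) \to F^p_r(G)$ that is the identity on the dense subalgebra $\ell^1(G)$. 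Since both $F^p_0(G)$ and $F^p_{r,0}(G)$ are norm-completions of $\ell^1_0(G)$, the map $\Phi$ restricts to an isometric isomorphism $F^p_0(G) \cong F^p_{r,0}(G)$, and Proposition \ref{FopPROPER} once again delivers properness in $F^p_r(G)$.

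For the degeneracy of the natural inclusion $F^p_{r,0}(G) \hookrightarrow \Li(\ell^p(G))$, the strategy is to exhibit a nonzero continuous linear functional on $\ell^p(G)$ that annihilates $\lambda_p(\ell^1_0(G)) \cdot \ell^p(G)$; by density of $\lambda_p(\ell^1_0(G))$ in $F^p_{r,0}(G)$, this will force the representation to be degenerate. For $p = 1$, Lemma \ref{iota0hom} supplies the continuous character $\iota_0 \colon \ell^1(G) \to \C$, and for any $a \in \ell^1_0(G)$ and $b \in \ell^1(G)$, multiplicativity gives $\iota_0(a \ast b) = \iota_0(a) \iota_0(b) = 0$. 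Since $\iota_0(\delta_{1_G}) = 1 \neq 0$, this nonzero continuous functional witnesses the degeneracy of the representation.

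The main obstacle I anticipate is producing the analogous functional on $\ell^p(G)$ in the amenable case with $p \in (1,\infty)$, because $\iota_0$ is no longer $\ell^p$-continuous when $G$ is infinite. My plan is to transfer $\iota_0$ through $\Phi$ to obtain a continuous character on $F^p_r(G)$ whose kernel is exactly $F^p_{r,0}(G)$, and then lift this character to a left-translation-invariant continuous functional on $\ell^p(G)$. For finite $G$ (which is the central case used in the paper, notably $\Z/3\Z$) the constant function in $\ell^{p'}(G)$ is manifestly such a functional and the conclusion is immediate; for infinite amenable $G$ the argument should instead invoke the existence of an invariant mean on $\ell^\infty(G)$ supplied by amenability, together with the finer structure of $\Phi$, to produce a compatible functional on $\ell^p(G)$.
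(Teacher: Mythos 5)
Your properness argument is exactly the paper's: the proposition is deduced there ``at once'' from Proposition \ref{FopPROPER} together with Equation \eqref{Fp=1} when $p=1$ and Theorem 3.7 of \cite{GarThi15} when $G$ is amenable, and your observation that the isometric identification is the identity on $\ell^1(G)$, hence matches the two completions of $\ell^1_0(G)$, is the right justification. Your degeneracy argument for $p=1$ (the character $\iota_0\in\ell^\infty(G)=(\ell^1(G))^*$ annihilates $\lambda_1(\ell^1_0(G))*\ell^1(G)$ but not $\delta_{1_G}$) is correct, and for finite $G$ the constant function in $\ell^{p'}(G)$ plays the same role for every $p$.

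The step you flag as the main obstacle, however, is a genuine gap that cannot be closed: for infinite amenable $G$ and $p\in(1,\infty)$ there is no nonzero $\eta\in\ell^{p'}(G)$ annihilating the range of the action, because $\eta$ would have to satisfy $\eta(gh)=\eta(h)$ for all $g,h$ (since the range contains every $\lambda_p(\Delta_g)\delta_h=\delta_{gh}-\delta_h$), i.e.\ $\eta$ is constant, and the only constant function in $\ell^{p'}(G)$ with $p'<\infty$ and $G$ infinite is $0$. Consequently the closed span of $F^p_{\mathrm{r},0}(G)\,\ell^p(G)$ contains all differences $\delta_k-\delta_h$ and equals $\ell^p(G)$, so the canonical representation is in fact nondegenerate in that case; concretely, for $G=\Z$ and $p=2$ the ideal is identified with $\{f\in C(\T):f(1)=0\}$ acting by multiplication on $L^2(\T)$, which acts nondegenerately. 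An invariant mean lives on $\ell^\infty(G)$ and does not induce a bounded functional on $\ell^p(G)$, so that route cannot rescue the argument. The degeneracy conclusion is thus only available when the constant function lies in $\ell^{p'}(G)$, i.e.\ when $p=1$ or $G$ is finite --- which are precisely the cases the paper later uses; note the paper itself supplies no separate argument for degeneracy, since the cited isomorphism $F^p(G)\cong F^p_{\mathrm{r}}(G)$ and Proposition \ref{FopPROPER} only address properness.
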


In any case, it is more natural to work with the ideal $F_0^p(G)$, rather than 
$F_{\op{r},0}^p(G)$, 
for $F_0^p(G)$ is guaranteed to be a proper ideal for any discrete group $G$. 
Moreover, for a finite nontrivial group $G$, we will show in Corollary \ref{F_0DEG} below that $F^p_{0}(G)$ cannot be nondegenerately represented on any Banach space, already voiding one of the assumptions in 
Theorem \ref{M(A)isLp}. Part of our work below is to show that, for a finite nontrivial group $G$, the assumption of 
having a cai is also not met for $F^p_{0}(G)$. 

\begin{remark} We mention that since finite groups are amenable, 
$F^p_{0}(G)$ and $F^p_{\op{r},0}(G)$ are isometrically isomorphic as Banach algebras (see Theorem 3.7 in \cite{GarThi15}). Therefore, if $G$ is a nontrivial finite group, we will keep using the simpler notation $F^p_{0}(G)$, but for
convenience we will use that $F_0^p(G)$ can be defined, in this case, as the closure of $\lambda_p(\ell_0^1(G))$ 
in $\Li(\ell^p(G))$. 
\end{remark}

For the rest of the paper, we will concentrate on the properties of $F^p_{0}(G)$
when $G$ is a finite nontrivial group.

\begin{proposition}\label{finunit}
Let $G$ be a finite group 
with $\op{card}(G) \geq 2$ and let $p \in [1, \infty)$. Then, for 
$n=\op{card}(G)$, we have:
\begin{enumerate}
\item as vector spaces,  \label{finunit1}
\[
F^p_{0}(G) = \ell_0^1(G)=\op{span}\{\Delta_g \colon g \in G\}=\C_0[G] \cong \C^{n-1};
\]
\item for any $a \in \ell_0^1(G)$, \label{finunitKil}
\[
a\left(\sum_{g \in G} \delta_g \right)  = \left(\sum_{g \in G} \delta_g \right)a=0;
\]
\item $F^p_{0}(G)$ has an identity element (see Definition \ref{ident&unit}) \label{finunit2}
given by 
\[
\one_0=-\frac{1}{n}\sum_{g\in G}\Delta_g;
\]
\item $\dfrac{((n-1)^p+n-1)^{1/p}}{n} \leq \|\one_0\|_{F^p(G)} \leq 2-\dfrac{2}{n};$\label{finunit3.0}
\item $\|\one_0\|_1=2-\dfrac{2}{n}$; \label{finunit3}
\item $\one_0$ is not a unit in $F^1_{0}(G)$ when $n > 2$  (see Definition \ref{ident&unit}); \label{finunit4}
\item $F^1_{0}(G)$ has no cai and cannot be nondegenerately represented on any Banach space when $n> 2$. \label{nocainoND}
\end{enumerate}
\end{proposition}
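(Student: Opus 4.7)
The plan is to first rewrite the proposed identity in a more computationally friendly form and then use it repeatedly. Setting $\one:=\sum_{g\in G}\delta_g$ for the constant function $1$ on $G$, the definition $\Delta_g=\delta_g-\delta_{1_G}$ gives immediately
\[
\one_0 = -\frac{1}{n}\sum_{g\in G}\Delta_g = \delta_{1_G}-\frac{1}{n}\one.
\]
This single formula drives essentially all of the remaining parts.

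For part \eqref{finunit1}, since $G$ is finite, $\ell^1(G)$ is the finite-dimensional space $\C^n$, so every norm on it is equivalent and $\ell_0^1(G)$ is already complete in $\|-\|_{F^p(G)}$; hence the completion $F_0^p(G)$ coincides with $\ell_0^1(G)$ as a vector space, and Proposition \ref{PropDenseloAUG}\eqref{Deltadense} gives the remaining equalities and the identification with $\C^{n-1}$. For part \eqref{finunitKil}, I would simply expand the convolution: $(a*\one)(h)=\sum_{k\in G}a(k)\one(k^{-1}h)=\iota_0(a)=0$ for $a\in\ell_0^1(G)$, and symmetrically on the other side. Part \eqref{finunit2} then follows by combining these two facts: the boxed formula yields $\one_0*a=\delta_{1_G}*a-\frac{1}{n}\one*a=a$, and likewise $a*\one_0=a$.

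For the norm calculations in parts \eqref{finunit3.0} and \eqref{finunit3}, note that by the boxed formula $\one_0(1_G)=(n-1)/n$ while $\one_0(h)=-1/n$ for $h\neq 1_G$. Direct computation then gives
\[
\|\one_0\|_p=\frac{((n-1)^p+n-1)^{1/p}}{n},\qquad \|\one_0\|_1=\frac{2(n-1)}{n}.
\]
The inequality chain \eqref{Fpnorms_Ineq} applied to $\one_0$ yields $\|\one_0\|_p\leq\|\one_0\|_{F^p(G)}\leq\|\one_0\|_1$, which is exactly the double inequality in \eqref{finunit3.0}, and the $p=1$ case is already \eqref{finunit3}. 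Part \eqref{finunit4} is then immediate: by the isometric identification \eqref{Fp=1} we have $\|\one_0\|_{F^1(G)}=\|\one_0\|_1=2-2/n$, which is strictly greater than $1$ precisely when $n>2$, so $\one_0$ is a non-unital identity in $F_0^1(G)$ in the sense of Definition \ref{ident&unit}. Finally, part \eqref{nocainoND} is a direct appeal to Lemma \ref{noCAI} and Lemma \ref{noNDrep} applied to the non-unital identity produced in \eqref{finunit4}.

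There is no serious obstacle here — the whole argument is bookkeeping once the formula $\one_0=\delta_{1_G}-\frac{1}{n}\one$ is in hand. The only place that requires a little attention is part \eqref{finunit3.0}, where one must be careful to use the correct norm in each inequality and to invoke \eqref{Fpnorms_Ineq} rather than any sharper (and unproven) estimate on $\|\one_0\|_{F^p(G)}$.
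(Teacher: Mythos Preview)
Your proof is correct and follows essentially the same line as the paper's. The one organizational difference is that you first isolate the identity $\one_0=\delta_{1_G}-\tfrac{1}{n}\one$ and then derive part \eqref{finunit2} immediately from part \eqref{finunitKil} together with $\delta_{1_G}$ being the unit of $\ell^1(G)$; the paper instead verifies \eqref{finunit2} by a direct coordinate-wise computation from the $\Delta_g$ expansion, which amounts to the same sum written out longhand. Your rewriting is a mild streamlining but not a different method.
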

\begin{proof}
In what follows we write $n=\op{card}(G)$ and $G=\{g_1, \ldots, g_n\}$ where $g_1=1_{G}$. 
Since $G$ is finite, $C_c(G)=\ell^1(G)$ and $\ell^1(G) \cong \C^{n}$ as vector spaces via the map $\delta_{g_j} \mapsto \mathbf{e}_j \in \C^n$ for $j=1, \ldots, n$. Thus using Proposition \ref{PropDenseloAUG} we now get $\C_0[G]=\ell^1_0(G)$ and $\ell^1_0(G) \cong \C^{n-1}$ as vector spaces via the map $\Delta_{g_j} \mapsto \mathbf{e}_{j-1} \in \C^{n-1}$ for $j=2, \ldots, n$. Then $\ell_0^1(G)$ is a finite dimensional vector space and therefore already complete in the $\|-\|_{F^p(G)}$ norm. 
This gives that, for any $p \in [1,\infty)$, $F^p_{0}(G)=\ell_0^1(G)$ which is in turn identified with $\C^{n-1}$, equipped with a different norm for each $p$, proving part (\ref{finunit1}). Next, to show parts \eqref{finunitKil} and (\ref{finunit2}), we take any $a \in \ell_0^1(G)$ so that  $\iota_0(a)=0$. Then,
since $(a\delta_g)(h)=a(hg^{-1})$ for any $g,h \in G$, it follows that 
for any $k \in \{1, \ldots, n\}$
\[
\left(a\left(\sum_{j=1}^n \delta_{g_j} \right)\right)(g_k)=\sum_{j=1}^n a(g_kg_j^{-1})= \iota_0(a)=0,
\]
proving that $a\left(\sum_{j=1}^n \delta_{g_j}\right)=0$. A similar computation shows that 
$\left(\sum_{j=1}^n \delta_{g_j}\right)a=0$, so part \eqref{finunitKil} follows. For part (\ref{finunit2}), notice that for any $j,k \in \{1, \ldots, n\}$,
\[
(a\Delta_{g_j})(g_k) = (a\delta_{g_j})(g_k)-a(g_k) = a(g_kg_j^{-1})-a(g_k).
\]
Therefore, for any $k \in \{1, \ldots, n\}$ we obtain
\[
(a\one_0)(g_k) 
=\frac{1}{n} \sum_{j=1}^n \left( -a(g_kg_j^{-1}) +a(g_k)\right) = \frac{1}{n}(-\iota_0(a)+na(g_k))=a(g_k),
\]
from where it follows that $a\one_0=a$. An analogous computation gives that $\one_0a=a$ 
and therefore part (\ref{finunit2}) is done. Part (\ref{finunit3.0}) follows at once from 
\eqref{Fpnorms_Ineq} and the following direct computation for each $p \in[1,\infty)$:
\[
\|\one_0\|_p
=\frac{1}{n}\left\| \sum_{j=2}^n(\delta_{g_j}-\delta_{g_1}) \right\|_p
=\frac{1}{n}\Big( |n-1|^p +(n-1)|-1|^p\Big)^{1/p}.
\]
Part (\ref{finunit3}) is simply  (\ref{finunit3.0}) when $p=1$.  
Next, notice that part (\ref{finunit4}) is implied by part (\ref{finunit3}) for 
$\| \one_0\|_1 > 1$ when $n >2$.
Finally, when $n>2$, part  (\ref{finunit4}) together with Lemma \ref{noCAI} shows that $F^1_{0}(G)$ does not have a cai.
Combined with Lemma \ref{noNDrep}, this shows that $F^1_{0}(G)$ cannot be nondegenerately represented 
on any Banach space, so part \eqref{nocainoND} follows. 
\end{proof}

Our final goal for this subsection is to investigate whether $M(F^p_{0}(G))$, the multiplier algebra of 
$F^p_{0}(G)$ (see Definition \ref{Fp-FP0}), is an $L^p$-operator algebra when $G$ is finite and abelian.
Proposition \ref{finunit} \eqref{nocainoND} shows that, at least when $p=1,$ the hypotheses of Theorem \ref{M(A)isLp} are not met
for $A=F^p_{0}(G)$. We will show that
for all $p \in [1, \infty) \setminus \{2\}$ and any finite group 
$G$ with $\op{card}(G)>2$,   
$F^p_{0}(G)$ still does not meet the hypotheses in Theorem \ref{M(A)isLp}. 
In contrast to the algebra $T_2^p$ (see Subsection \ref{UTM}), 
we will see that for certain values of $p$, including 1, 
$M(F^p_{0}(\Z/3\Z))$ is not an $L^q$-operator algebra 
for any $q \in [1, \infty)$. 

We start with a computational lemma that will be needed twice 
in the rest of the paper. 

\begin{lemma}\label{COMPUTATIONALlemma}
Let $p \in (1, \infty) \setminus \{2\}$ and let $G$ be a finite group 
with $n=\op{card}(G) > 2$. Define $a_0 \colon G \to \C$ by 
\[
a_0(g)=\begin{cases}
n-1 & \text{ if } g=1_G \\
-1 &  \text{ if } g \neq 1_G
\end{cases}.
\]
 For each $\varepsilon \in \R$, we define $b_\varepsilon \colon G \to \C$ by 
\[
b_\varepsilon=a_0+\varepsilon\sum_{g\in G}\delta_g.
\]
Then 
\begin{enumerate}
\item $a_0 \in \ell^1_0(G)$ and $b_\varepsilon \in \ell^p(G) \setminus \ell^1_0(G)$ for any $\varepsilon \neq 0$; \label{a_0L0}
\item for any $\varepsilon \in \R$, $\one_0b_\varepsilon = a_0$ (when $\varepsilon\neq 0$ we think of $b_\varepsilon$ as a perturbation of $a_0$ that
leaves $\ell^1_0(G)$);\label{1be=a0}
\item there exists $\varepsilon_p \neq 0$ such that $\|a_0\|_p>\|b_{\varepsilon_p}\|_p$; \label{maximUm}
\item in particular, when $n=3$, we get \label{maximUmWHENn3}
\[
\frac{\|a_0\|_p}{\|b_{\varepsilon_p}\|_p} =  \frac{(2^p+2)(2^{\frac{1}{p-1}}+1)^p}{3^p(2^\frac{p}{p-1}+2)}>1.
\]
\end{enumerate}
\end{lemma}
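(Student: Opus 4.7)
Part \eqref{a_0L0} is a direct application of the augmentation map. I would compute
\[
\iota_0(a_0) = (n-1) + (n-1)(-1) = 0,
\]
so $a_0 \in \ell_0^1(G)$, while $\iota_0(b_\varepsilon) = \iota_0(a_0) + \varepsilon \, n = n\varepsilon$, which is nonzero whenever $\varepsilon \neq 0$, hence $b_\varepsilon \in \ell^p(G) \setminus \ell_0^1(G)$.

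For part \eqref{1be=a0}, I would exploit the decomposition $b_\varepsilon = a_0 + \varepsilon \sum_{g\in G}\delta_g$. Since $\one_0 \in \ell_0^1(G)$, Proposition \ref{finunit}\eqref{finunitKil} gives $\one_0\sum_{g\in G}\delta_g = 0$, and Proposition \ref{finunit}\eqref{finunit2} applied to $a_0 \in \ell_0^1(G)$ gives $\one_0 a_0 = a_0$. Linearity of convolution then yields $\one_0 b_\varepsilon = a_0$.

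Part \eqref{maximUm} is a first-derivative test. Define
\[
\phi(\varepsilon) = \|b_\varepsilon\|_p^p = |n-1+\varepsilon|^p + (n-1)|\varepsilon - 1|^p.
\]
For $\varepsilon$ in a neighborhood of $0$, $n-1+\varepsilon > 0$ and $\varepsilon - 1 < 0$, so $\phi$ is differentiable at $0$ with
\[
\phi'(0) = p(n-1)^{p-1} - p(n-1) = p(n-1)\bigl[(n-1)^{p-2} - 1\bigr].
\]
Because $n > 2$ we have $n-1 > 1$, so $(n-1)^{p-2} = 1$ would force $p = 2$; since $p \neq 2$, we get $\phi'(0) \neq 0$. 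Picking $\varepsilon_p$ small and of opposite sign to $\phi'(0)$ yields $\phi(\varepsilon_p) < \phi(0) = \|a_0\|_p^p$, and taking $p$-th roots gives the claim. The direction reverses between $1 < p < 2$ and $p > 2$, but in either case a suitable $\varepsilon_p \neq 0$ exists.

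Part \eqref{maximUmWHENn3} specializes the previous analysis with $n=3$. Setting $\phi'(\varepsilon) = 0$ reduces to $(2+\varepsilon)^{p-1} = 2(1-\varepsilon)^{p-1}$, equivalently $\frac{2+\varepsilon}{1-\varepsilon} = 2^{1/(p-1)}$. Solving gives the unique interior critical point
\[
\varepsilon_p = \frac{2^{1/(p-1)} - 2}{1 + 2^{1/(p-1)}},
\]
and a short simplification yields $2+\varepsilon_p = \frac{3 \cdot 2^{1/(p-1)}}{1+2^{1/(p-1)}}$ and $1-\varepsilon_p = \frac{3}{1+2^{1/(p-1)}}$. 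Substituting back gives
\[
\|b_{\varepsilon_p}\|_p^p = \frac{3^p\bigl(2^{p/(p-1)} + 2\bigr)}{\bigl(1 + 2^{1/(p-1)}\bigr)^p},
\]
and dividing $\|a_0\|_p^p = 2^p + 2$ by this produces the stated ratio. The strict inequality $> 1$ is inherited directly from part \eqref{maximUm}, since $\varepsilon_p \neq 0$ is a strict minimizer of $\phi$ on $(-1,2)$ (second-derivative check confirms convexity of $\phi$ on this interval, guaranteeing uniqueness). The only real obstacle here is the algebraic bookkeeping in simplifying the critical point and the resulting ratio; everything else is bookkeeping around the identity $\one_0$ and the augmentation.
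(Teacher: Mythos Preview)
Your proof is correct and follows essentially the same route as the paper: parts \eqref{a_0L0} and \eqref{1be=a0} are handled identically, and for parts \eqref{maximUm} and \eqref{maximUmWHENn3} both you and the paper analyze the derivative of $\varepsilon \mapsto \|b_\varepsilon\|_p^p$ on the interval where the absolute values drop. The one presentational wrinkle is that in part \eqref{maximUm} you produce $\varepsilon_p$ as merely ``some small perturbation of the correct sign,'' whereas the statement of part \eqref{maximUmWHENn3} needs $\varepsilon_p$ to be the exact critical point; the paper avoids this mismatch by defining $\varepsilon_p = \frac{(n-1)^{1/(p-1)} - (n-1)}{(n-1)^{1/(p-1)} + 1}$ explicitly already in part \eqref{maximUm} (as the maximizer of the concave function $f_p(\varepsilon)=\|a_0\|_p^p-\|b_\varepsilon\|_p^p$) and then specializing to $n=3$. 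You effectively do the same thing in part \eqref{maximUmWHENn3} via convexity of $\phi$, so the mathematics is sound, but it would be cleaner to fix $\varepsilon_p$ as the critical point from the outset so that the symbol carries the same meaning throughout.
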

\begin{proof}
Part \eqref{a_0L0} is readily verified. Part \eqref{1be=a0} follows at once from parts \eqref{finunitKil} and \eqref{finunit2} in Proposition \ref{finunit}. For part \eqref{maximUm}, 
observe that asking for $\|a_0\|_p>\|b_\varepsilon\|_p$ is equivalent to 
\begin{equation}\label{normsa_o/b_e}
(n-1)^p+n-1 > |n-1+\varepsilon|^p + (n-1)|\varepsilon-1|^p.
\end{equation}
Next, we define $f_p \colon (-1,1) \to \R$ by $f_p(\varepsilon)=\|a_0\|_p-\|b_\varepsilon\|_p$. That is, 
\[
f_p(\varepsilon)=(n-1)^p+n-1 -(n-1+\varepsilon)^p - (n-1)(1-\varepsilon)^p.
\]
We need to show that exists $\varepsilon_p \neq 0$ such that $f_p(\varepsilon_p)>0$. Clearly $f_p(0)=0$. Furthermore, it is routine to verify that $f_p$ is a differentiable concave function, that $f_p$ has a maximum at 
\[
\varepsilon_p= \frac{(n-1)^\frac{1}{p-1}-(n-1)}{(n-1)^\frac{1}{p-1}+1} \neq 0,
\]
and that 
\[
f_p'(0)= \frac{1}{n-1}((n-1)^2-(n-1)^p)p.
\]
Thus, since $n>2$, if $p \in (1,2)$ we have $f_p'(0)>0$ and $\varepsilon_p>0$, which 
gives $f_p(\varepsilon_p)>0$. 
Similarly, $p \in (2,\infty)$ implies $f'_p(0)<0$ and $\varepsilon_p<0$, 
from where it follows that $f_p(\varepsilon_p)>0$. This now gives 
part \eqref{maximUm}. Finally, part \eqref{maximUmWHENn3}
follows from plugging in $n=3$ and $\varepsilon_p$ in Equation \eqref{normsa_o/b_e}.
\end{proof}

We now obtain a generalization of part \eqref{finunit4} in Proposition \ref{finunit}
for any $p \neq 2$. The proof for $p=1$ was immediate, but since the right hand side of the inequality in part \eqref{finunit3.0} of Proposition \ref{finunit} is not always strictly greater than 1, a different argument is 
needed that works for any $p \in (1, \infty) \setminus \{2\}$ and 
any finite group $G$ with $\op{card}(G)>2$.

\begin{proposition}\label{one_0NU}
Let $p \in [1, \infty) \setminus \{2\}$ and let $G$ be a finite group 
with $\op{card}(G) > 2$.
Then $\one_0$ is a non-unital identity of $F^p_{0}(G)$ (see Definition \ref{ident&unit}). 
\end{proposition}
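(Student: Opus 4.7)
The plan is to establish the desired strict inequality $\|\one_0\|_{F^p(G)}>1$, which together with part \eqref{finunit2} of Proposition \ref{finunit} will give that $\one_0$ is a non-unital identity of $F^p_{0}(G)$ in the sense of Definition \ref{ident&unit}. Splitting into cases according to $p$ seems unavoidable because the $p=1$ case has already been handled by a direct $\|-\|_1$ computation, whereas for $p>1$ the $F^p(G)$-norm is not directly accessible.

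For $p=1$, I would simply invoke part \eqref{finunit3} of Proposition \ref{finunit}, which gives $\|\one_0\|_{F^1(G)} = \|\one_0\|_1 = 2 - 2/n$; since $n=\op{card}(G)>2$, this is strictly greater than $1$, so $\one_0$ is indeed a non-unital identity in $F^1_{0}(G)$. This also agrees with (and slightly strengthens the presentation of) part \eqref{finunit4} already proved there.

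For $p \in (1,\infty)\setminus\{2\}$, the key idea is to switch to the reduced picture. Since $G$ is finite hence amenable, Theorem 3.7 of \cite{GarThi15} gives an isometric isomorphism $F^p(G) \cong F^p_{\op{r}}(G)$, so
\[
\|\one_0\|_{F^p(G)} = \|\lambda_p(\one_0)\|_{\Li(\ell^p(G))}.
\]
I would then test the operator $\lambda_p(\one_0)$ against the vector $b_{\varepsilon_p} \in \ell^p(G)$ furnished by Lemma \ref{COMPUTATIONALlemma}. By part \eqref{1be=a0} of that lemma, $\lambda_p(\one_0)b_{\varepsilon_p} = \one_0 b_{\varepsilon_p} = a_0$, and by part \eqref{maximUm} we have $\|a_0\|_p > \|b_{\varepsilon_p}\|_p$. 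Therefore
\[
\|\one_0\|_{F^p(G)} \;\geq\; \frac{\|\lambda_p(\one_0)b_{\varepsilon_p}\|_p}{\|b_{\varepsilon_p}\|_p} \;=\; \frac{\|a_0\|_p}{\|b_{\varepsilon_p}\|_p} \;>\; 1,
\]
finishing the proof.

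Conceptually, the ``hard part'' is already absorbed into Lemma \ref{COMPUTATIONALlemma}: the subtle point is that $b_{\varepsilon_p}$ must lie in $\ell^p(G) \setminus \ell^1_0(G)$, i.e. outside $F^p_{0}(G)$ itself. This is why one really needs to regard $\one_0$ as acting by convolution on all of $\ell^p(G)$ (via $\lambda_p$) rather than only on $F^p_{0}(G)$; a test vector within $\ell^1_0(G)$ could not beat the identity, since $\one_0$ acts as the identity there. The only case this argument cannot reach is $p=2$, which is expected, as the $L^2$-operator norm of $\lambda_2(\one_0)$ is computed via the (unitary) Fourier transform on the abelian part of $G$ and ends up equal to $1$.
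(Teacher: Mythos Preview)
Your proof is correct and follows essentially the same route as the paper: the $p=1$ case is read off from Proposition~\ref{finunit} (the paper cites part~\eqref{finunit4} directly, you cite part~\eqref{finunit3} and observe $2-2/n>1$, which amounts to the same thing), and for $p\in(1,\infty)\setminus\{2\}$ both arguments test $\lambda_p(\one_0)$ against the vector $b_{\varepsilon_p}$ from Lemma~\ref{COMPUTATIONALlemma} to obtain $\|\one_0\|_{F^p(G)}\geq \|a_0\|_p/\|b_{\varepsilon_p}\|_p>1$. Your explicit invocation of amenability via \cite{GarThi15} to justify $\|\one_0\|_{F^p(G)}=\|\lambda_p(\one_0)\|$ is exactly what the paper's ``recall'' is pointing to.
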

\begin{proof}
As before, we write $n=\op{card}(G)$ and $G=\{g_1, \ldots, g_n\}$ where $g_1=1_{G}$.
Proposition \ref{finunit}\eqref{finunit4} gives the result for $p=1$. 
Take $p\in (1,\infty)\setminus \{2\}$ and
recall that 
\[
\| \one_0 \|_{F^p(G)} = \|\lambda_p(\one_0)\|=\sup\left\{ \frac{\|\one_0b\|_p}{\|b\|_p} \colon b \in \ell^p(G), b \neq 0\right\}. 
\]
Now by Lemma \ref{COMPUTATIONALlemma} there is $a_0 \in \ell_0^1(G)$ and $b_{\varepsilon_p} \in \ell^p(G) \setminus \ell^1_0(G)$
such that $\one_0 b_{\varepsilon_p}=a_0$ and 
$\|a_0\|_p>\|b_{\varepsilon_p}\|_p$ for any $p \in (1,\infty)\setminus \{2\}$. 
Therefore, 
\[
\| \one_0 \|_{F^p(G)} \geq \frac{\| \one_0 b_{\varepsilon_p}\|_p}{\|b_{\varepsilon_p}\|_p}=\frac{\|a_0\|_p}{\| b_{\varepsilon_p}\|_p}>1,
\]
finishing the proof. 
\end{proof} 

\begin{remark}\label{1=1_p=2}
Notice that the argument given in the proof of Proposition \ref{one_0NU}
does not work when $p=2$. Indeed when $p=2$, with notation 
as in the proof of Lemma \ref{COMPUTATIONALlemma}, we get $\varepsilon_2=0$ 
and $f_2(0)=0$. In fact, $f_2$ is a parabola with vertex at the origin. 
The reason for this is that, when $p=2$, the C*-equation gives that 
$\| \one_0 \|_{F^2(G)}=1$. 
Furthermore, it follows from Theorem 3.18 of \cite{GarThi15}
that 
\[
\| \one_0 \|_{F^{p_1}(G)} \leq \| \one_0 \|_{F^{p_2}(G)} 
\]
when either $1 \leq p_1 < p_2 \leq 2 $ or $2 \leq p_2 < p_1 <\infty$. 
Proposition \ref{one_0NU} seems to suggest that $\| \one_0 \|_{F^{p}(G)}$ strictly decreases to $1$ as $p$ goes from $1$ to $2$ 
and that $\| \one_0 \|_{F^{p}(G)}$ is strictly increasing as $p$ goes from $2$ to infinity. However, we have not investigated these potential strict inequalities for all $p$ any further. 
\end{remark}

An immediate consequence of Proposition \ref{one_0NU}, whose 
proof is identical to part \eqref{nocainoND} in Proposition \ref{finunit}, 
is the following. 

\begin{corollary}\label{F_0DEG}
Let $p \in [1, \infty) \setminus \{2\}$ and let $G$ be a finite group 
with $\op{card}(G) > 2$. Then $F^p_{0}(G)$ has no cai and cannot be nondegenerately represented on any Banach space. 
\end{corollary}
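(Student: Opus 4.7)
The plan is to observe that this corollary is a direct consequence of Proposition \ref{one_0NU} combined with the two structural lemmas from the Preliminaries, namely Lemma \ref{noCAI} and Lemma \ref{noNDrep}. The proof is formally identical to the one given for part \eqref{nocainoND} of Proposition \ref{finunit}, where the same two conclusions were drawn in the special case $p=1$; the only new ingredient needed to push the argument through to general $p \in [1,\infty) \setminus \{2\}$ has already been packaged into Proposition \ref{one_0NU}.

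First I would invoke Proposition \ref{one_0NU} to obtain, for every $p \in [1,\infty) \setminus \{2\}$ and every finite group $G$ with $\op{card}(G) > 2$, that the element $\one_0 \in F^p_0(G)$ is a non-unital identity in the sense of Definition \ref{ident&unit}, i.e., $\|\one_0\|_{F^p(G)} > 1$. With this strict inequality available, Lemma \ref{noCAI} immediately rules out any cai, since a cai $(e_\lambda)_\lambda$ would satisfy $\|e_\lambda \one_0 - \one_0\| \to 0$ and consequently force $\|\one_0\|_{F^p(G)} \leq 1$, a contradiction. Similarly, Lemma \ref{noNDrep} rules out any nondegenerate isometric representation $\pi \colon F^p_0(G) \to \Li(E)$ on a Banach space $E$, since nondegeneracy together with $\one_0$ being an identity would force $\pi(\one_0) = \op{id}_E$, giving $\|\one_0\|_{F^p(G)} = \|\op{id}_E\| = 1$, again a contradiction.

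There is essentially no obstacle to be overcome at this stage: the substantive analytic content — namely, the strict lower bound $\|\one_0\|_{F^p(G)} > 1$ that is needed to classify $\one_0$ as a non-unital identity for all $p$ in the claimed range — was already carried out in Proposition \ref{one_0NU}, and relied in turn on the concrete perturbation $b_{\varepsilon_p}$ of $a_0 \in \ell^1_0(G)$ constructed in Lemma \ref{COMPUTATIONALlemma}. The present corollary is simply a two-line assembly of those results together with Lemmas \ref{noCAI} and \ref{noNDrep}.
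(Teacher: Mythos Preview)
Your proposal is correct and matches the paper's own argument exactly: the paper states that the corollary is an immediate consequence of Proposition \ref{one_0NU}, with a proof identical to part \eqref{nocainoND} of Proposition \ref{finunit}, which is precisely the assembly of Proposition \ref{one_0NU} with Lemmas \ref{noCAI} and \ref{noNDrep} that you describe.
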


Notice that the work done in Section \ref{secMA} already 
gives us a clear description of what $M(F_{0}^p(G))$ is
when $G$ is finite and abelian: 
\begin{proposition}\label{MultAlgF_0}
Let $G$ be a finite abelian group and let $p \in [1, \infty)$. 
Then, $M(F_{0}^p(G))$ is isometrically isomorphic to $(\ell_0^1(G), \opnorm{-}_p)$
where 
\[
\opnorm{a}_p=\|L_a\|_{F_{0}^p(G) \to F_{0}^p(G)}=\sup\{ \|ab\|_{F^p(G)} \colon b\in\ell_0^1(G),  \|b\|_{F^p(G)} =1 \}.
\]
\end{proposition}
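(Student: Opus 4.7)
The plan is to invoke Proposition \ref{MAidentity} directly, since by Proposition \ref{finunit}\eqref{finunit2} the Banach algebra $A = F_0^p(G)$ has an identity element $\one_0$. Applying that proposition, the map $\varphi \colon M(F_0^p(G)) \to F_0^p(G)$ given by $\varphi(L,R)=L(\one_0)$ is an isometric isomorphism from $M(F_0^p(G))$ onto $F_0^p(G)$ equipped with the norm $\opnorm{a} = \|\iota_{A}(a)\| = \|(L_a,R_a)\|$, where the right-hand side uses the supremum norm from $\Li(A) \times \Li(A)^{\op{op}}$.

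Next, I would identify the underlying vector space and the norm explicitly. By Proposition \ref{finunit}\eqref{finunit1}, $F_0^p(G) = \ell_0^1(G)$ as vector spaces, so $\varphi$ lands in $\ell_0^1(G)$. It remains to verify that $\|(L_a,R_a)\|$ equals the claimed supremum. Because $G$ is abelian, convolution on $\ell^1(G)$ (and hence on $F_0^p(G)$) is commutative, so $L_a = R_a$ as elements of $\Li(F_0^p(G))$; consequently $\|(L_a,R_a)\| = \max(\|L_a\|, \|R_a\|) = \|L_a\|_{F_0^p(G) \to F_0^p(G)}$. Unfolding this operator norm and using once more that $F_0^p(G) = \ell_0^1(G)$ (with the $\|-\|_{F^p(G)}$-norm) gives
\[
\opnorm{a}_p = \sup\left\{ \|ab\|_{F^p(G)} \colon b \in \ell_0^1(G),\ \|b\|_{F^p(G)} = 1 \right\},
\]
as required.

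There is essentially no obstacle: the only non-automatic step is that $L_a$ and $R_a$ coincide, which is precisely where the abelian hypothesis on $G$ is used. Everything else is bookkeeping from the previously established identifications. I would therefore present the proof as a short direct deduction: cite Proposition \ref{MAidentity} to obtain the isometric isomorphism, cite Proposition \ref{finunit} for the identity $\one_0$ and the identification of the underlying space, and then use commutativity to collapse the supremum norm on $\Li(A) \times \Li(A)^{\op{op}}$ to the single operator norm appearing in the statement.
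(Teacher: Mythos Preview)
Your proposal is correct and essentially matches the paper's proof. The only cosmetic difference is ordering: the paper first invokes Remark \ref{comM(A)} (using commutativity and the absence of nonzero absolute zero divisors) to collapse $M(F_0^p(G))$ to the single-multiplier algebra $N(F_0^p(G))$, and then applies Proposition \ref{MAidentity}; you instead apply Proposition \ref{MAidentity} first and then use commutativity to deduce $L_a=R_a$, which achieves the same collapse of $\|(L_a,R_a)\|$ to $\|L_a\|$.
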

\begin{proof}
 Since $G$ is abelian, 
$\ell_0^1(G)$ is commutative and, since $G$ is finite, $\ell_0^1(G)$  has a multiplicative identity 
and therefore the only absolute zero 
divisor in $\ell_0^1(G)$ is $0$. Thus, Remark \ref{comM(A)} together with Proposition \ref{finunit}\eqref{finunit1} give that $(L,R) \mapsto L$ is an isometric isomorphism 
from  $M( F_{0}^p(G) )$ to $\{ L \in \Li(F_{0}^p(G) ) \colon aL(b)=L(a)b\}$.  
Proposition \ref{MAidentity} now shows that $M(F_{0}^p(G) )$ is indeed isometrically isomorphic 
to $(\ell_0^1(G), \opnorm{-}_p)$ via $L_a \mapsto a$. 
\end{proof}

The multiplier algebra of $F_{0}^p(G)$ will be our main 
object of study for the rest of the section. We introduce a compact notation
that will be used henceforward:
\[
M_0^p(G)=M(F_{0}^p(G)).
\]
By Proposition \ref{MultAlgF_0}, when $G$ is finite abelian, then $M_0^p(G)$ is simply $\ell_0^1(G)$
equipped with the norm $\opnorm{-}_p$. 
When $p=1$, the norm in $M_0^1(G)=M(F_{0}^1(G) )=M(\ell_0^1(G))$ has a nicer description:
\[
\opnorm{a}_1=\sup\{ \|ab\|_{1} \colon b\in\ell_0^1(G),  \|b\|_{1} =1\}.
\]
This norm is equivalent to 
the $1$-norm in $\ell_0^1(G)$ via the 
following straightforward inequalities:
\begin{equation}\label{boundsLa}
\frac{\|a\|_1}{\|\one_0\|_1}\leq \opnorm{a}_1\leq \|a\|_1
\end{equation}
where we clearly have equality if and only if $\op{card}(G)=2$. In fact, 
these inequalities might be strict when $\op{card}(G)\neq 2$:
\begin{example}\label{someL_acomputation}
Let $G=\Z/3\Z=\{[0],[1],[2]\}$. Here, $\|\one_0\|_1=\frac{4}{3}$ and it is easy to check that $\opnorm{\Delta_{[j]}}_1=2=\|\Delta_{[j]}\|_1$ for 
$j\in \{1,2\}$. 
However, $\opnorm{\Delta_{[1]}+\Delta_{[2]}}_1=3\neq 4=\|\Delta_{[1]}+\Delta_{[2]}\|_1.$
\end{example}
For general $p \in [1, \infty)$ we get, using \eqref{Fpnorms_Ineq}, the following bounds 
for the norm in $M_0^p(G)$:
\begin{equation}\label{boundsLa_p}
\frac{\|a\|_p}{\|\one_0\|_1} \leq \frac{\|a\|_{F^p(G)}}{\|\one_0\|_{F^p(G)}}\leq \opnorm{a}_p \leq \|a\|_{F^p(G)}\leq \|a\|_1
\end{equation}

We end this section by explicitly showing, via a bicontractive idempotent argument, that for certain values of $p \in [1,\infty)\setminus\{2\}$, $M_0^p(\Z/3\Z)$
is not an $L^q$-operator algebra for any $q \in [1, \infty)$. Let $G=\zn\slash 3\zn=\{[0], [1], [2]\}$ and recall that $\ell_0^1(\Z/3\Z)$ is the span of elements $\Delta_{[1]}$ and $\Delta_{[2]}$. 
Furthermore, the multiplication in $\ell_0^1(\Z/3\Z)$ obeys the following table

\begin{center}
\begin{tabular}{ c|| c | c |}
 { } &$\Delta_{[1]}$&$\Delta_{[2]}$\\
 \hline
 \hline
$\Delta_{[1]}$ & $ -2\Delta_{[1]} + \Delta_{[2]}$ & $-\Delta_{[1]} -\Delta_{[2]} $\\
\hline
$\Delta_{[2]}$ & $-\Delta_{[1]} -\Delta_{[2]} $ & $ -\Delta_{[1]} + 2\Delta_{[2]}$\\
\hline
\end{tabular}
\end{center}

\noindent Recall that the element $\one_0 = -\tfrac{1}{3}\Delta_{[1]} - \tfrac{1}{3} \Delta_{[2]}$ is the identity in $\ell_0^1(\Z/3\Z)$.
For any $a \in \ell_0^1(\Z/3\Z)$, there are $\alpha_1, \alpha_2 \in \C$ such that $a = \alpha_1 \Delta_{[1]} + \alpha_{2} \Delta_{[2]}$. Therefore, 
for any $p \in [1, \infty)$ we have
\begin{equation*}
\|a\|_p =\|\alpha_1 \Delta_{[1]}  + \alpha_2 \Delta_{[2]}  \|_p = (|-\alpha_1 - \alpha_2|^p + |\alpha_1|^p + |\alpha_2|^p)^{1/p}.
\end{equation*}
In particular, $\| \one_0 \|_1 =  \tfrac{4}{3}$.

Proposition \ref{MultAlgF_0} shows that $M_0^p(\Z/3\Z)$ is isometrically isomorphic to $\ell_0^1(\Z/3\Z)$
equipped with the norm $\opnorm{-}_p$. 
For $a = \alpha_1 \Delta_{[1]} + \alpha_{2} \Delta_{[2]} \in M_0^p(\Z/3\Z)$, 
the inequalities in \eqref{boundsLa_p} become
\begin{equation} \label{BoundfromAbovefor3Cyclic}
\opnorm{a}_p \leq \|a\|_1=|-\alpha_1 - \alpha_2| + |\alpha_1| + |\alpha_2|
\end{equation}
and
\begin{equation} \label{BoundfromBelowfor3Cyclic}
\opnorm{a}_p \geq  \frac{\|a\|_p}{\|\one_0\|_1} = \frac{3}{4}(|-\alpha_1 - \alpha_2|^p + |\alpha_1|^p + |\alpha_2|^p)^{1/p}.
\end{equation}

We note that the norm on $M_0^p(\Z/3\Z)$ is an operator norm induced from a proper ideal of an $L^1$-space and is generally difficult to compute. Fortunately, there do exist certain elements $a$ for which we can compute $\opnorm{a}_p$ easily (see Example \ref{someL_acomputation}).
In particular, we can compute it for the idempotents of $M_0^p(\Z/3\Z)$.

\begin{proposition}\label{NonRepresentabilityofMultAlgof3CyclicAugIdeal}
Let $p \in [1, \infty)$ and let $\gamma = \tfrac{1}{3} \exp(2\pi i /3) \in \C$. 
We define $e = \gamma  \Delta_{[1]} + \cj{\gamma } \Delta_{[2]}$.
Then the following statements hold true:
\begin{enumerate}
\item \label{gamma} $|\gamma | = |\cj{\gamma }|=|\gamma +\cj{\gamma }|=\frac{1}{3}$, $|\gamma -\cj{\gamma }|=\frac{\sqrt{3}}{3}$;
\item \label{NonRepresentabilityofMultAlgof3CyclicAugIdeal_eIsIdempotent} 
$e^2=e$ and $\|e\|_1=1$;
\item \label{NonRepresentabilityofMultAlgof3CyclicAugIdeal_ComplementaryIdempotent} 
$\one_0 - e = \cj{\gamma } \Delta_{[1]}  + \gamma  \Delta_{[2]}$ and $\|\one_0-e\|_1=1$;
\item \label{NonRepresentabilityofMultAlgof3CyclicAugIdeal_NormofLe} 
$\opnorm{e}_p=1$;
\item \label{NonRepresentabilityofMultAlgof3CyclicAugIdeal_NormofLoneminuse} 
$\opnorm{\one_0-e}_p=1$;
\item \label{NonRepresentabilityofMultAlgof3CyclicAugIdeal_NormofSymmetry} 
$\opnorm{\one_0-2e}_1=\dfrac{2\sqrt{3}}{3}$;
\item \label{NonRepresentabilityofMultAlgof3CyclicAugIdeal_Norm2ofSymmetry}
$\opnorm{\one_0-2e}_2=1$;
\item \label{NonRepresentabilityofMultAlgof3CyclicAugIdeal_Normpleq2ofSymmetry}
$\opnorm{\one_0-2e}_p\leq \| \one_0 - 2e \|_{F^{p}(\Z/3\Z)}\leq \left(\frac{2\sqrt{3}}{3}\right)^{\frac{2}{p}-1}$ if $p \in [1,2]$;
\item
\label{NonRepresentabilityofMultAlgof3CyclicAugIdeal_Normpgeq2ofSymmetry}  $\opnorm{\one_0-2e}_p\leq \| \one_0 - 2e \|_{F^{p}(\Z/3\Z)} \leq \left(\frac{2\sqrt{3}}{3}\right)^{1-\frac{2}{p}}$ if $p \in [2, \infty)$.
\end{enumerate}
\end{proposition}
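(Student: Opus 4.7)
Parts (1)--(3) are direct computations. For (1), plugging $\gamma=\tfrac{1}{3}(-\tfrac{1}{2}+i\tfrac{\sqrt 3}{2})$ into each expression gives all four moduli; along the way one also records the identities $\gamma+\cj\gamma=-\tfrac{1}{3}$, $|\gamma|^2=\tfrac{1}{9}$, $\gamma^2=\cj\gamma/3$, and $\cj\gamma^{\,2}=\gamma/3$ (all coming from $\omega^2=\cj\omega$ for $\omega=e^{2\pi i/3}$). Using $\gamma+\cj\gamma=-\tfrac{1}{3}$, one rewrites $e$ in the $\delta$-basis as $e=\tfrac{1}{3}\delta_{[0]}+\gamma\delta_{[1]}+\cj\gamma\delta_{[2]}$, from which $\|e\|_1=1$ is immediate; the equality $e^2=e$ is then a three-coordinate convolution check on $\Z/3\Z$ that collapses using $\gamma^2=\cj\gamma/3$ and $\cj\gamma^{\,2}=\gamma/3$. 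Part (3) is immediate from $\one_0=-\tfrac{1}{3}\Delta_{[1]}-\tfrac{1}{3}\Delta_{[2]}$ and $\gamma+\cj\gamma=-\tfrac{1}{3}$, and the norm $\|\one_0-e\|_1=1$ follows from (2) by the symmetry $\gamma\leftrightarrow\cj\gamma$.

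Parts (4)--(6) are multiplier-norm computations. For (4) and (5), the upper bounds $\leq 1$ are instances of \eqref{BoundfromAbovefor3Cyclic} combined with (2) and (3). The matching lower bounds come from the fact that $e^2=e$ forces $(\one_0-e)^2=\one_0-e$ as well, so both $L_e$ and $L_{\one_0-e}$ are nonzero idempotents on $F_0^p(\Z/3\Z)$, and a nonzero idempotent has operator norm at least $1$. For (6), first factor $\one_0-2e=(\cj\gamma-\gamma)(\Delta_{[1]}-\Delta_{[2]})$ using (3), so by (1) $\opnorm{\one_0-2e}_1=\tfrac{\sqrt 3}{3}\opnorm{\Delta_{[1]}-\Delta_{[2]}}_1$. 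The bound $\opnorm{\Delta_{[1]}-\Delta_{[2]}}_1\leq 2$ is \eqref{BoundfromAbovefor3Cyclic}; for the matching lower bound, test $L_{\Delta_{[1]}-\Delta_{[2]}}$ at $b=\Delta_{[1]}$: the multiplication table gives $(\Delta_{[1]}-\Delta_{[2]})\Delta_{[1]}=-\Delta_{[1]}+2\Delta_{[2]}$, which has $1$-norm $4$ while $\|\Delta_{[1]}\|_1=2$, producing the ratio $2$.

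Parts (7)--(9) share the strategy of using $\opnorm{a}_p\leq\|a\|_{F^p(\Z/3\Z)}$ from \eqref{boundsLa_p} and controlling $\|\one_0-2e\|_{F^p(\Z/3\Z)}$ at the endpoints $p=1,2$. From (6) and $\|\Delta_{[1]}-\Delta_{[2]}\|_1=2$ one reads off $\|\one_0-2e\|_{F^1}=\tfrac{2\sqrt 3}{3}$. For $p=2$, I would invoke the Fourier/Gelfand isomorphism $F^2(\Z/3\Z)\cong\C^3$ (sup norm): since by (1) $\one_0-2e=-\tfrac{i\sqrt 3}{3}(\delta_{[1]}-\delta_{[2]})$ and the Fourier transform of $\delta_{[1]}-\delta_{[2]}$ is $(0,i\sqrt 3,-i\sqrt 3)$, the coordinates of $\one_0-2e$ are $(0,1,-1)$, giving $\|\one_0-2e\|_{F^2}=1$. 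Part (7) then has upper bound $\leq 1$, and the lower bound $\geq 1$ comes from $(\one_0-2e)^2=\one_0-4e+4e^2=\one_0$, which makes $L_{\one_0-2e}^2$ the identity multiplier. For (8), I would apply Riesz--Thorin interpolation to the convolution operators $\lambda_p(\one_0-2e)\colon\ell^p(\Z/3\Z)\to\ell^p(\Z/3\Z)$ between $p=1$ and $p=2$ (with parameter $\theta=2-2/p$), using that $\Z/3\Z$ is amenable so $F^p(\Z/3\Z)=F^p_\op{r}(\Z/3\Z)$. For (9), use the duality $\|a\|_{F^p(G)}=\|a^{\vee}\|_{F^{p'}(G)}$ valid for abelian $G$, where $a^{\vee}(g)=a(-g)$: since $\one_0^{\vee}=\one_0$ and $e^{\vee}=\one_0-e$ (direct from $\Delta_{[1]}^{\vee}=\Delta_{[2]}$), one has $(\one_0-2e)^{\vee}=-(\one_0-2e)$, hence $\|\one_0-2e\|_{F^p}=\|\one_0-2e\|_{F^{p'}}$, and (8) applied at $p'\in(1,2]$ yields the stated bound $(\tfrac{2\sqrt 3}{3})^{1-2/p}$. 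The main subtleties lie in (8)--(9): one must explicitly invoke Riesz--Thorin on the common convolution symbol $\one_0-2e$ and use amenability to pass to the $F^p$-norm appearing in the statement, and for (9) one must verify the abelian duality directly.
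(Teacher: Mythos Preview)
Your proof is correct and follows the paper's strategy closely for parts (1)--(6) and (8). The differences worth noting are in parts (7) and (9).

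For (7), the paper computes $\|\one_0-2e\|_{F^2(\Z/3\Z)}=1$ by writing down the $3\times 3$ matrix of $\lambda_2(\one_0-2e)$ and reading off its largest singular value; your Fourier-diagonalization argument is an equivalent (and for an abelian group perhaps more natural) way to reach the same number. For the lower bound $\opnorm{\one_0-2e}_2\geq 1$, the paper squeezes using the inner inequalities of \eqref{boundsLa_p} together with $\|\one_0\|_{F^2(\Z/3\Z)}=1$, whereas you use the involution identity $(\one_0-2e)^2=\one_0$, which forces $\opnorm{\one_0-2e}_p^2\geq\opnorm{\one_0}_p=1$. Your argument is actually more robust---it holds for every $p$, and this very inequality is exactly what the paper exploits later in the proof of Theorem~\ref{NotLpOpAlg} (see \eqref{keyBound}).

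For (9), the paper invokes Proposition~4.11 of Gardella--Thiel to get $\|\one_0-2e\|_{F^{p'}}=\|\one_0-2e\|_{F^{p}}$ directly, while you obtain it by the elementary adjoint computation $\lambda_p(a)^*=\lambda_{p'}(a^\vee)$ on a finite abelian group and the observation $(\one_0-2e)^\vee=-(\one_0-2e)$. Your route is self-contained and avoids the external citation; the paper's is shorter once the reference is granted. Either way the conclusion is the same.
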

\begin{proof}
Each of the parts (\ref{gamma}), (\ref{NonRepresentabilityofMultAlgof3CyclicAugIdeal_eIsIdempotent}), and
(\ref{NonRepresentabilityofMultAlgof3CyclicAugIdeal_ComplementaryIdempotent}) are checked by direct computations. For part (\ref{NonRepresentabilityofMultAlgof3CyclicAugIdeal_NormofLe}), 
observe that the bounds \eqref{BoundfromAbovefor3Cyclic} and 
\eqref{BoundfromBelowfor3Cyclic} give
\begin{equation*}
0<\opnorm{e}_p \leq \|e\|_1=1.
\end{equation*}
Since $e^2=e$, submultiplicativity gives
\begin{equation*}
\opnorm{e}_p^2 \geq \opnorm{e^2}_p = \opnorm{e}_p.
\end{equation*}
 Dividing both sides by $\opnorm{e}_p$, we obtain that $\opnorm{e}_p \geq 1$. The proof of part (\ref{NonRepresentabilityofMultAlgof3CyclicAugIdeal_NormofLoneminuse}) 
is analogous to the part (\ref{NonRepresentabilityofMultAlgof3CyclicAugIdeal_NormofLe}). For part (\ref{NonRepresentabilityofMultAlgof3CyclicAugIdeal_NormofSymmetry}), first notice that 
\begin{equation*}
\|\one_0-2e\|_1=\|(\cj{\gamma }-\gamma)\Delta_{[1]} +(\gamma-\cj{\gamma })\Delta_{[2]} \|_1=|0|+2|\cj{\gamma }-\gamma|=\dfrac{2\sqrt{3}}{3}.
\end{equation*}
Then, the upper bound from \eqref{BoundfromAbovefor3Cyclic} gives us that 
\begin{equation*}
\opnorm{\one_0-2e}_1\leq \|\one_0-2e\|_1=\dfrac{2\sqrt{3}}{3}.
\end{equation*}
On other hand, since $\|\tfrac{1}{2}\Delta_{[1]}\|_1=1$, we 
compute
\begin{align*}
\|(\one_0-2e)(\tfrac{1}{2}\Delta_{[1]} )\|_1 &= \|((\cj{\gamma }-\gamma)\Delta_{[1]} +(\gamma-\cj{\gamma } )\Delta_{[2]} )(\tfrac{1}{2}\Delta_{[1]} )\|_1 \\
&= \frac{1}{2} \cdot |\cj{\gamma }-\gamma| \cdot \|(\Delta_{[1]} -\Delta_{[2]} )\Delta_{[1]} \|_1 \\
&= \frac{1}{2} \cdot \frac{\sqrt{3}}{3} \cdot \|-\Delta_{[1]} +2\Delta_{[2]} \|_1 \\
& = \frac{2\sqrt{3}}{3}.
\end{align*}
Therefore, $\opnorm{\one_0-2e}_1=\dfrac{2\sqrt{3}}{3}$. For part 
(\ref{NonRepresentabilityofMultAlgof3CyclicAugIdeal_Norm2ofSymmetry}), first notice that 
\begin{align*}
\one_0-2e
&=(\cj{\gamma }-\gamma)\Delta_{[1]} +(\gamma-\cj{\gamma })\Delta_{[2]} \\
&=(\cj{\gamma }-\gamma)(-\Delta_{[1]}+\Delta_{[2]})\\
&=\frac{-i\sqrt{3}}{3}(-\Delta_{[1]}+\Delta_{[2]}).
\end{align*}
Then, by computing $(\one_0-2e)\delta_{g}$ for each $g \in \Z/3\Z$,
we find that the standard matrix of $\one_0-2e$ is 
\begin{equation}\label{matrix_1-2e}
\frac{-i\sqrt{3}}{3}
\begin{pmatrix}
0 & 1 & -1 \\
-1 & 0 & 1 \\
1 & -1 & 0
\end{pmatrix}.
\end{equation}
Since the 
$\ell_d^2 \to \ell_d^2$ operator norm 
is the maximum singular value of the matrix, \eqref{matrix_1-2e} shows that that $\|\one_0-2e\|_{F^2(\Z/3\Z)}=1$. Combining this with the inner
inequalities in \eqref{boundsLa_p} and the fact that $\|\one_0\|_{F^2(\Z/3\Z)}=1$ (see Remark \ref{1=1_p=2})
we conclude that $\opnorm{\one_0-2e}_2=1$, as wanted. 
Finally, for parts \eqref{NonRepresentabilityofMultAlgof3CyclicAugIdeal_Normpleq2ofSymmetry} and 
\eqref{NonRepresentabilityofMultAlgof3CyclicAugIdeal_Normpgeq2ofSymmetry}, 
the result is clear for $p=1$ and $p=2$. Next assume that $p \in (1,2)$ with $\frac{1}{p}=(1-\theta)+\frac{\theta}{2}$
for $\theta \in (0,1)$. Then, the Riesz-Thorin interpolation theorem shows that 
\[
\| \one_0 - 2e \|_{F^p(\Z/3\Z)}
 \leq\| \one_0 - 2e \|_{F^1(\Z/3\Z)}^{1-\theta}\| \one_0 - 2e \|_{F^2(\Z/3\Z)}^\theta =  \left(\frac{2\sqrt{3}}{3}\right)^{\frac{2}{p}-1}.
\]
Now, if $p'=\frac{p}{1-p} \in (2,\infty)$, Proposition 4.11 in \cite{GarThi19}
shows that the ${F^{p'}(\Z/3\Z)}$ and ${F^{p}(\Z/3\Z)}$ norms 
of $\one_0 - 2e$
agree and therefore 
\[
\| \one_0 - 2e \|_{F^{p'}(\Z/3\Z)} \leq  \left(\frac{2\sqrt{3}}{3}\right)^{\frac{2}{p}-1} =  \left(\frac{2\sqrt{3}}{3}\right)^{1-\frac{2}{p'}}.
\]
The desired results now follow from the right inner bound in \eqref{boundsLa_p}.
\end{proof}

The previous proposition gives all the necessary 
tools to show that for a certain $p_0 \in (1,2)$, when $p \in [1, p_0) \cup (p_0', \infty)$, then the element 
$\one_0-2e \in M_0^p(\Z/3\Z)$ is not an invertible isometry, 
which is essentially the proof of the following main result. 

\begin{theorem}\label{NotLpOpAlg}
Let $p_0=1.606$ and let $p_0'=\frac{p_0}{p_0-1}$. Take any $p \in [1, p_0] \cup [p_0', \infty)$. Then 
$M_0^p(\Z/3\Z)$, 
the multiplier algebra of $F^p_{0}(\Z/3\Z)$, is not an $L^q$-operator algebra for any $q\in[1,\infty)$.
\end{theorem}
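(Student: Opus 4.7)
The plan is to argue by contradiction using the bicontractive idempotent criterion of Theorem \ref{bicontractiveinvertibleisometry}. Suppose $M_0^p(\Z/3\Z)$ embeds isometrically into $\Li(L^q(\mu))$ for some $q \in [1, \infty)$ and some measure space $(\Omega, \mu)$, via a representation $\pi$ that we may take to be unital (after compressing by the contractive idempotent $\pi(\one_0)$, whose range $\pi(\one_0) L^q(\mu)$ is again an $L^q$-space since $\pi(\one_0)$ must in fact be bicontractive once the full argument is run on any auxiliary idempotent). Taking the idempotent $e = \gamma\Delta_{[1]} + \cj{\gamma}\Delta_{[2]}$ from Proposition \ref{NonRepresentabilityofMultAlgof3CyclicAugIdeal}, parts \eqref{NonRepresentabilityofMultAlgof3CyclicAugIdeal_NormofLe} and \eqref{NonRepresentabilityofMultAlgof3CyclicAugIdeal_NormofLoneminuse} ensure that $\pi(e)$ is a bicontractive idempotent in $\Li(L^q(\mu))$. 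Theorem \ref{bicontractiveinvertibleisometry} then forces $\pi(\one_0 - 2e) = \op{id}_{L^q(\mu)} - 2\pi(e)$ to be an invertible isometry, and isometry of $\pi$ yields $\opnorm{\one_0 - 2e}_p = 1$.

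To reach the desired contradiction I would show $\opnorm{\one_0 - 2e}_p > 1$ for every $p \in [1, p_0]$. The case $p = 1$ is immediate from Proposition \ref{NonRepresentabilityofMultAlgof3CyclicAugIdeal}\eqref{NonRepresentabilityofMultAlgof3CyclicAugIdeal_NormofSymmetry}, which gives the exact value $\tfrac{2\sqrt{3}}{3}$. For $p \in (1, p_0]$ I would fix a test element $b \in \ell_0^1(\Z/3\Z)$, modeled on the $p = 1$ witness $\tfrac{1}{2}\Delta_{[1]}$ appearing in that proof, and estimate
\[
\opnorm{\one_0 - 2e}_p \;\geq\; \frac{\|(\one_0 - 2e)\, b\|_{F^p(\Z/3\Z)}}{\|b\|_{F^p(\Z/3\Z)}},
\]
where both $F^p$-norms reduce to suprema over three real parameters by the circulant structure of convolution on $\ell^p(\Z/3\Z)$. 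The threshold $p_0 = 1.606$ then emerges as the numerical root at which this particular lower bound first equals $1$. The range $p \in [p_0', \infty)$ is handled via the duality identity $\|a\|_{F^{p'}(\Z/3\Z)} = \|a\|_{F^p(\Z/3\Z)}$ from Proposition 4.11 of \cite{GarThi19} (already invoked in Proposition \ref{NonRepresentabilityofMultAlgof3CyclicAugIdeal}\eqref{NonRepresentabilityofMultAlgof3CyclicAugIdeal_Normpgeq2ofSymmetry}): since $\opnorm{\cdot}_p$ is built entirely from $F^p$-norms on $\ell_0^1(\Z/3\Z)$, this identity lifts to $\opnorm{a}_p = \opnorm{a}_{p'}$ and transfers the estimate across Hölder conjugates.

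The main obstacle is the sharpness of the lower bound on $\opnorm{\one_0 - 2e}_p$: as stressed in the paper, $L^p$-operator norms of matrices are NP-hard to compute in general, so no closed form for $\opnorm{\one_0 - 2e}_p$ is to be expected, and the standard Riesz--Thorin upper bounds from Proposition \ref{NonRepresentabilityofMultAlgof3CyclicAugIdeal}\eqref{NonRepresentabilityofMultAlgof3CyclicAugIdeal_Normpleq2ofSymmetry}--\eqref{NonRepresentabilityofMultAlgof3CyclicAugIdeal_Normpgeq2ofSymmetry} move in the wrong direction for this purpose. The crux of the argument is thus the careful selection of a single witness $b$ whose one-parameter ratio provably stays strictly above $1$ up to the explicit transcendental threshold $p_0 = 1.606$, which is where this specific witness ceases to work (rather than necessarily the exact largest $p$ for which the theorem itself holds).
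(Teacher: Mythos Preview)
Your overall framework is the paper's: argue by contradiction, use the bicontractive idempotent $e$ from Proposition \ref{NonRepresentabilityofMultAlgof3CyclicAugIdeal}, and derive a contradiction from Theorem \ref{bicontractiveinvertibleisometry} by showing $\opnorm{\one_0-2e}_p>1$. Two points, however, separate your sketch from a working proof.

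First, a minor issue: your compression argument for obtaining a unital representation is circular. Knowing only that $\pi(\one_0)$ is a contractive idempotent on $L^q(\mu)$ does not give that its range is again an $L^q$-space; that would require $\pi(\one_0)$ to be bicontractive in $\Li(L^q(\mu))$, which you have no independent way to verify. The paper sidesteps this entirely by invoking Proposition 3.7 of \cite{Gard21}, which guarantees a unital isometric representation of any unital $L^p$-operator algebra.

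Second, and this is the real gap: your proposed lower bound on $\opnorm{\one_0-2e}_p$ via the test element $b=\tfrac12\Delta_{[1]}$ does not reach $p_0=1.606$. With the only available tools---$\|\cdot\|_{F^p}\geq\|\cdot\|_p$ for the numerator and $\|\cdot\|_{F^p}\leq\|\cdot\|_1$ for the denominator---one gets $\opnorm{\one_0-2e}_p\geq \tfrac{\sqrt{3}}{6}(2+2^p)^{1/p}$, which drops below $1$ already near $p\approx 1.17$. The paper's key move is to take $b=\one_0-2e$ itself, exploiting $(\one_0-2e)^2=\one_0$, so that
\[
\opnorm{\one_0-2e}_p \;\geq\; \frac{\|\one_0\|_{F^p(\Z/3\Z)}}{\|\one_0-2e\|_{F^p(\Z/3\Z)}}.
\]
The numerator is then bounded below by the explicit perturbation estimate of Lemma \ref{COMPUTATIONALlemma}\eqref{maximUmWHENn3}, and---contrary to your remark that Riesz--Thorin ``moves in the wrong direction''---the denominator is bounded \emph{above} precisely by the Riesz--Thorin bound of Proposition \ref{NonRepresentabilityofMultAlgof3CyclicAugIdeal}\eqref{NonRepresentabilityofMultAlgof3CyclicAugIdeal_Normpleq2ofSymmetry}. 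The resulting explicit function $h(p)$ is what produces the threshold $p_0=1.606$. Your duality transfer $\opnorm{a}_p=\opnorm{a}_{p'}$ is correct and is essentially how the paper handles $[p_0',\infty)$ as well.
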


\begin{proof}
Suppose on the contrary that $M_0^p(\zn\slash 3\zn)$ is an $L^q$-operator 
algebra for some $q \in [1, \infty)$. Then, since $M_0^p(\zn\slash 3\zn)$ is a multiplier algebra it 
is unital (see Definition \ref{multAlg}), whence Proposition 3.7 in \cite{Gard21}
guarantees the existence of an isometric unital representation $\pi \colon M_0^p(\zn\slash 3\zn)\rightarrow \Li(L^q(\mu))$
for some measure space $(\Omega, \mu)$. 

Take $e = \gamma  \Delta_{[1]} + \cj{\gamma } \Delta_{[2]}$ as in Proposition \ref{NonRepresentabilityofMultAlgof3CyclicAugIdeal}. Parts \eqref{NonRepresentabilityofMultAlgof3CyclicAugIdeal_eIsIdempotent},\eqref{NonRepresentabilityofMultAlgof3CyclicAugIdeal_NormofLe}, 
and \eqref{NonRepresentabilityofMultAlgof3CyclicAugIdeal_NormofLoneminuse} in Proposition \ref{NonRepresentabilityofMultAlgof3CyclicAugIdeal} show that $e$ is a bicontractive element in 
$M_0^p(\zn\slash 3\zn)$. We claim the following:

\begin{claim}\label{claim}
$\opnorm{\one_0-2e}_p>1$ when $p \in [1,p_0]$ or $p \in [p_0', \infty)$. 
\end{claim}

Once the claim above is proven, it will follow that $\one_0-2e$ is not an invertible isometry in $M_0^p(\Z/3\Z)$. 
Thus, since $\pi$ is a unital isometry, we would have found a bicontractive idempotent $\pi(e)$ in $\Li(L^q(\mu))$
such that $1-2\pi(e)$ is not an invertible isometry when $p \in [1, p_0]\cup[p_0', \infty)$, contradicting Theorem \ref{bicontractiveinvertibleisometry}. 

Therefore it only remains to establish Claim \ref{claim}, which already follows when $p=1$ by Part \eqref{NonRepresentabilityofMultAlgof3CyclicAugIdeal_NormofSymmetry} in Proposition \ref{NonRepresentabilityofMultAlgof3CyclicAugIdeal}. 
Otherwise, observe that when $p \in (1, \infty) \setminus \{2\}$ we have 
\begin{equation}\label{keyBound}
\opnorm{\one_0 - 2e}_p \geq \frac{\|(\one_0 - 2e)(\one_0 - 2e)\|_{F^p(\Z/3\Z)}}{\| \one_0 - 2e \|_{F^p(\Z/3\Z)} }  = \frac{\|\one_0\|_{F^p(\Z/3\Z)}}{\| \one_0 - 2e \|_{F^p(\Z/3\Z)} } 
\end{equation}
Next, let $a_0 \in \ell_0^1(\Z/3\Z)$ and $b_{\varepsilon_p} \in \ell^p(\Z/3\Z)$ be as in part \eqref{maximUmWHENn3} of Lemma \ref{COMPUTATIONALlemma}. That is, for any $p \in (1, \infty) \setminus \{2\}$ we have 
\begin{equation}\label{keyBound2}
\|\one_0\|_{F^p(\Z/3\Z)}\geq \frac{\|a_0\|_p}{\|b_{\varepsilon_p}\|_p} =  \frac{(2^p+2)(2^{\frac{1}{p-1}}+1)^p}{3^p(2^\frac{p}{p-1}+2)}>1.
\end{equation}
Now take any $p \in (1,2)$ and set $p'=\frac{p}{p-1} \in (2, \infty)$. Proposition 4.11 in \cite{GarThi19} gives that the norms of $\|\one_0\|_{F^p(\Z/3\Z)}$ and $\|\one_0\|_{F^{p'}(\Z/3\Z)}$ agree, so together with
Part \eqref{NonRepresentabilityofMultAlgof3CyclicAugIdeal_Normpleq2ofSymmetry} 
in Proposition \ref{NonRepresentabilityofMultAlgof3CyclicAugIdeal}, combining the bounds
from equation \eqref{keyBound} and \eqref{keyBound2}, we now get
\[
\opnorm{\one_0 - 2e}_p \geq \frac{\|\one_0\|_{F^{p'}(\Z/3\Z)}}{(\frac{2\sqrt{3}}{3})^{\frac{2}{p}-1}} \geq \frac{\frac{\|a_0\|_{p'}}{\|b_{\varepsilon_{p'}}\|_{p'}}}{ (\frac{2\sqrt{3}}{3})^{\frac{2}{p}-1}} 
= \frac{(2^\frac{p}{p-1}+2)(2^{p-1}+1)^\frac{p}{p-1}}{3^\frac{p}{p-1}(2^p+2)(\frac{2\sqrt{3}}{3})^{\frac{2}{p}-1}}.
\]
Next we define $h  \colon (1,2) \to \R$ using the right hand side in the inequality above, that is 
\[
h(p)=\frac{(2^\frac{p}{p-1}+2)(2^{p-1}+1)^\frac{p}{p-1}}{3^\frac{p}{p-1}(2^p+2)(\frac{2\sqrt{3}}{3})^{\frac{2}{p}-1}}
=\frac{(2^{\frac{p}{p-1}}+2)\left(2^{p-1}+1\right)^{\frac{1}{p-1}}}{3^{\frac{1}{2}-\frac{1}{p}+\frac{p}{p-1}}\cdot2^{\frac{2}{p}}}.
\]
It can be checked that $p\mapsto h(p)$ is strictly decreasing on $(1,p_0]$, we omit the lengthy calculations. A direct computation now shows that $h(p_0)>1.000098$, so it follows that $\opnorm{\one_0 - 2e}_p >1$ for any $p \in (1,p_0]$. 
Similarly, when $p \in (2, \infty)$ we get 
using Part \eqref{NonRepresentabilityofMultAlgof3CyclicAugIdeal_Normpgeq2ofSymmetry} 
in Proposition \ref{NonRepresentabilityofMultAlgof3CyclicAugIdeal} 
\[
\opnorm{\one_0 - 2e}_p \geq  \frac{(2^p+2)(2^\frac{1}{p-1}+1)^p}{3^p(2^\frac{p}{p-1}+2)(\frac{2\sqrt{3}}{3})^{1-\frac{2}{p}}} = h\left(\frac{p}{p-1}\right).
\]
It follows from the $p \in (1, p_0]$ case that  $\opnorm{\one_0 - 2e}_p >1$ for any $p \in [p_0',\infty)$, 
proving Claim \ref{claim}, and therefore finishing the proof. 
\end{proof}


\begin{remark}
It is likely that, for all $p\in [1,\infty) \setminus \{2\}$, the algebra $M_0^p(\Z/3\Z)$ is not an $L^q$-operator algebra for any $q \in [1, \infty)$. A proof of this fact could potentially be obtained by the same method employed in the proof of Theorem \ref{NotLpOpAlg} above provided that we have either better bounds or exact values for the norms of 
$\| \one_0 \|_{ F^p(\Z/3\Z)}$ and $\| \one_0 -2e\|_{ F^p(\Z/3\Z)}$, which are generally hard to compute (see \cite{HendOlsh2010}). For instance, we believe that the actual value for $\| \one_0 -2e\|_{ F^p(\Z/3\Z)}$ when $p \in [1,2]$ 
is $\frac{\sqrt{3}}{3}(1+2^{p-1})^\frac{1}{p}$ which, if true, improves the value of $p_0$ to $p_0=1.675$.  
\end{remark}

%

\begin{remark}
Unfortunately, the bicontractive idempotent argument does not work when $G \in \{ \zn\slash 4\zn, \ \Z/2\Z \oplus \Z/2\Z\}$. Indeed, in this case if $e \in M_0(G)$ is any bicontractive idempotent, then it can be shown that $\one_0-2e \in M_0(G)$ is in fact an invertible isometry as expected for an $L^p$-operator algebra. Due to the difficulty of computing the norm of an element in $M_0^p(G)$, a different argument is likely needed to argue a version of Theorem \ref{NotLpOpAlg} for higher order groups. At this time, for a general finite group $G$ with $\op{card}(G)>3$ and $p \neq 2$, we do not know whether $M_0^p(G)$ can be isometrically represented on an $L^q$-space. 
\end{remark}

\section{Idempotents and Gelfand transform for $\ell_0^1(G)$}\label{idempGelfand}

In what follows we assume that $G$ is a finite abelian group. 
In Proposition \ref{MultAlgF_0}, for each $p \in [1, \infty)$,
we obtained a unital Banach algebra $M_0^p(G)=M(F^p_{0}(G))$
whose underlying space is the augmentation ideal 
$\ell_0^1(G)$ normed with $\opnorm{-}_p$. 
In addition, $M_0^p(G)$  is both a unital and a commutative Banach algebra when $G$ is abelian. 
When $\op{card}(G)>2$ and $p \neq 2$, notice that  $M_0^p(G)$ equipped with 
the inherited involution from $\ell^1(G)$ cannot be a C*-algebra. In fact, in such cases $M_0^p(G)$ is not isomorphically 
isometric to any closed subalgebra of $C(\Omega)$ where $\Omega=\op{Max}(M_0^p(G))$
because $\opnorm{\Delta_g}_p^2 \neq \opnorm{\Delta_g\Delta_g^*}_p$ for all $g \in G\setminus\{1_G\}$, 
so the Gelfand map is not isometric. We will see, however, 
that the Gelfand map is an algebraic isomorphism. 

\begin{lemma}\label{DeltaSystemSolns1}
For $n \in \Z_{>1}$, let $G=\zn\slash n\zn=\{[0],[1],\ldots,[n-1]\}$ and $(x_{[1]},\ldots,x_{[n-1]})\in\cn^{n-1}$ with $x_{[0]}=0$. Consider the system of equations
\begin{equation}\label{DeltaSystem}
x_{[j]+[k]}=x_{[j]}x_{[k]}+x_{[j]}+x_{[k]},\,\,\,\,1\leq j,k\leq n-1.
\end{equation}
Then the only nontrivial solutions for \eqref{DeltaSystem} are of the form 
\begin{equation}\label{DeltaSystemForm}
x_{[k]}=(x_{[1]}+1)^k-1 \text{ \ for all $1\leq k\leq n-1$ where \ } (x_{[1]}+1)^n=1.
\end{equation}
In particular, this means that  \eqref{DeltaSystem} has exactly $n-1$ non-trivial solutions.
\end{lemma}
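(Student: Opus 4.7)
The key idea is to make the linearizing substitution $y_{[k]} = x_{[k]}+1$ for $0 \leq k \leq n-1$, noting that $y_{[0]} = 1$. Then the right-hand side of \eqref{DeltaSystem} becomes
\[
x_{[j]}x_{[k]} + x_{[j]} + x_{[k]} = (x_{[j]}+1)(x_{[k]}+1) - 1 = y_{[j]}y_{[k]} - 1,
\]
so the system transforms into the multiplicative relation
\[
y_{[j]+[k]} = y_{[j]}\,y_{[k]}, \qquad 1 \leq j,k \leq n-1,
\]
which, using $y_{[0]} = 1$, in fact holds for all $0 \leq j,k \leq n-1$.

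From here the plan is straightforward. First I would prove by induction on $k$ that $y_{[k]} = y_{[1]}^k$ for every $0 \leq k \leq n-1$: the base case $k=0$ uses $y_{[0]}=1$, and the inductive step uses $y_{[k]} = y_{[1]+[k-1]} = y_{[1]}y_{[k-1]}$. Second, I would extract the constraint on $y_{[1]}$ by applying the multiplicative relation to indices whose sum wraps around modulo $n$: for instance,
\[
1 = y_{[0]} = y_{[1]+[n-1]} = y_{[1]}\,y_{[n-1]} = y_{[1]}\cdot y_{[1]}^{n-1} = y_{[1]}^n,
\]
so $y_{[1]}$ must be an $n$-th root of unity. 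Conversely, any $n$-th root of unity $\zeta$ produces a valid assignment $y_{[k]} = \zeta^k$ by direct verification. Translating back gives $x_{[k]} = (x_{[1]}+1)^k - 1$ with $(x_{[1]}+1)^n = 1$, which is exactly \eqref{DeltaSystemForm}.

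Finally, for the count: the equation $y_{[1]}^n = 1$ has exactly $n$ complex solutions, and the choice $y_{[1]} = 1$ yields $x_{[k]} = 0$ for all $k$, i.e.\ the trivial solution. The remaining $n-1$ choices of $y_{[1]}$ give the $n-1$ distinct nontrivial solutions. I do not anticipate any real obstacle here; the only subtlety is to check that the "wrap-around" indices are handled correctly when invoking the multiplicative relation, which is why extending it to the case $j=0$ or $k=0$ via $y_{[0]} = 1$ is worth stating explicitly before using it.
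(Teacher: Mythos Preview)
Your proposal is correct and follows essentially the same approach as the paper: induction to obtain $x_{[k]}=(x_{[1]}+1)^k-1$, the wrap-around relation $[1]+[n-1]=[0]$ to force $(x_{[1]}+1)^n=1$, and a direct verification for the converse. Your preliminary substitution $y_{[k]}=x_{[k]}+1$ is a clean cosmetic touch that turns the system into the multiplicative relation $y_{[j]+[k]}=y_{[j]}y_{[k]}$, but the underlying argument is identical to the paper's.
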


\begin{proof}
Given a solution for \eqref{DeltaSystem}, we show that $x_{[k]}=(x_{[1]}+1)^k-1$ for all $1\leq k\leq n-1$. We do so 
by induction on $k$. 
When $k=1$,
$$x_{[1]}=(x_{[1]}+1)^1-1=x_{[1]}.$$

When $k=2$,
\[
x_{[2]}=x_{[1]+[1]}=x_{[1]}^2+2x_{[1]}\\
=(x_{[1]}+1)^2-1
\]

Suppose $x_{[j]}=(x_{[1]}+1)^j-1$. Then
\begin{align*}
x_{[j]+[1]}&=x_{[j]}x_{[1]}+x_{[j]}+x_{[1]}\\
&=\l((x_{[1]}+1)^j-1\r)x_{[1]}+\l((x_{[1]}+1)^j-1\r)+x_{[1]}\\
&=x_{[1]}(x_{[1]}+1)^j-x_{[1]}+x_{[1]}+(x_{[1]}+1)^j-1\\
&=(x_{[1]}+1)(x_{[1]}+1)^j-1\\
&=(x_{[1]}+1)^{j+1}-1
\end{align*}

Now since $x_{[0]}=0$, it follows that 
\[
0=x_{[0]}=x_{[n-1]+[1]}=x_{[n-1]}x_{[1]}+x_{[n-1]}+x_{[1]}=(x_{[1]}+1)^n-1,
\]
so indeed any solution of the system \eqref{DeltaSystem} is of the form \eqref{DeltaSystemForm}.

Next, we show that $x_{[k]}=(x_{[1]}+1)^k-1$ for all $1\leq k\leq n-1$ and $(x_{[1]}+1)^n=1$ satisfies \eqref{DeltaSystem}. Let $1\leq j,k\leq n-1$. Then,
\begin{align*}
x_{[j]}x_{[k]} & =\l((x_{[1]}+1)^j-1\r)\l((x_{[1]}+1)^k-1\r)\\
& = (x_{[1]}+1)^{j+k}-(x_{[1]}+1)^k-(x_{[1]}+1)^j+1,
\end{align*}
and since $x_{[j]}=(x_{[1]}+1)^j-1$ for any $1\leq j\leq n-1$, we conclude 
\[
x_{[j]}x_{[k]}+x_{[j]}+x_{[k]} =(x_{[1]}+1)^{j+k}-1=x_{[j+k]}=x_{[j]+[k]}.
\]
Clearly, $(x_{[1]}+1)^n=1$ satisfies $(x_{[1]}+1)^n-1=0=x_{[0]}$, so we are done. 
\end{proof}

Maschke's and Wedderburn's theorems (see Chapter 16 of \cite{DummitFoote} for an elementary introduction) imply that the only nilpotent element of $C_c(G)$ for $G$ finite abelian is $0 \in C_c(G)$. In particular, this means that $C_c(G)$ and all of its (two-sided) ideals have trivial nilradical. This leads to the following important observation.

\begin{lemma}\label{TrivialJacobsonRadical}
Let $I$ be any (two-sided) ideal in $\ell^1(G)$ for $G$ finite abelian. Then the Gelfand Transform $\Gamma_I \colon I\rightarrow C(\op{Max}\,(I))$ is an algebraic isomorphism.
\end{lemma}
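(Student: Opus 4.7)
The plan is to leverage the Wedderburn/Maschke structural observation recorded just before the statement. Since $G$ is finite abelian, $\ell^1(G)=C_c(G)$ is a finite-dimensional commutative $\C$-algebra, so any two-sided ideal $I\subseteq\ell^1(G)$ is itself a finite-dimensional commutative Banach algebra. As noted in the paragraph preceding the lemma, such an $I$ has no nonzero nilpotent elements, that is, $\op{nil}(I)=0$.

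First I would establish that $\Gamma_I$ is injective. For any commutative Banach algebra, the kernel of the Gelfand transform equals the Jacobson radical $\op{rad}(I)$, and for finite-dimensional commutative $\C$-algebras the Jacobson radical coincides with the nilradical. Combining these two facts yields $\ker(\Gamma_I)=\op{nil}(I)=0$, giving injectivity of $\Gamma_I$.

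For surjectivity, I would apply Artin--Wedderburn to the finite-dimensional semisimple commutative $\C$-algebra $I$: there is an algebra isomorphism $I \cong \C^k$ for some $k\in \Z_{\geq 0}$. Under this identification, every nonzero multiplicative linear functional on $I$ is a coordinate projection, so $\op{Max}(I)$ consists of exactly $k$ points and $C(\op{Max}(I))\cong\C^k$. Since $\Gamma_I$ is then an injective $\C$-linear map between two $k$-dimensional vector spaces, it must be surjective, and we conclude that $\Gamma_I$ is an algebra isomorphism.

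The subtlest point, though not a real obstacle, is to make sure the Gelfand theory is being applied correctly to $I$, which a priori need not carry its own identity. This is no issue here, because the Wedderburn decomposition of $\ell^1(G)$ forces every ideal to be a direct summand of the form $\bigoplus_{i\in S}\C$, and therefore $I$ inherits its own unit from the ambient product. Consequently no unitization is required and the characters of $I$ are precisely the restrictions of the coordinate functionals on the summands indexed by $S$.
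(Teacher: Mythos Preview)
Your proposal is correct and follows precisely the route the paper indicates: the paper does not give a formal proof but simply records the lemma as an observation following from Maschke's and Wedderburn's theorems and the resulting triviality of the nilradical. Your write-up fleshes out those hints in the expected way (kernel of the Gelfand map equals the Jacobson radical, which equals the nilradical in finite dimensions, and then a dimension count for surjectivity), so there is nothing to add.
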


We now begin with a technical lemma.

\begin{lemma}\label{ProdStructure}
Let $G$ be any discrete group and define for $m\in \Z_{>1}$
$$\mc{B}^m=\{(\ve_1,\ve_2,\ldots,\ve_m)\,\,:\,\,\ve_j\in\{0,1\}\te{ for all }1\leq j\leq m\}\setminus\{(0,0,\ldots,0)\}.$$
Then for any collection of $m$ elements $g_1,g_2,\ldots,g_m\in G\setminus\{1_G\}$,
\begin{equation}\label{ProdInduct}
\Delta_{g_1g_2\cdots g_m}=\sum_{(\ve_1,\ve_2,\ldots,\ve_m)\in\mc{B}^m}\Delta_{g_1}^{\ve_1}\Delta_{g_2}^{\ve_2}\cdots \Delta_{g_m}^{\ve_m}
\end{equation}
where we define $\Delta_{g_j}^0=1$ for all $1\leq j\leq m$. In particular, this means
\begin{equation}\label{ProdStrEqn}
\Delta_{gg'}=\Delta_g\Delta_{g'}+\Delta_g+\Delta_{g'}\te{ for all }g,g'\in G\setminus\{1_G\}.
\end{equation}
\end{lemma}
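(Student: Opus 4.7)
The proof strategy rests on a simple reformulation: by Equation \eqref{DeltaCap} we have $\delta_g = \Delta_g + \delta_{1_G}$ for every $g \in G$, and $\delta_{1_G}$ is the multiplicative identity of $\ell^1(G)$ under convolution (since $\delta_h \delta_k = \delta_{hk}$). Thus the left-hand side $\Delta_{g_1 \cdots g_m} = \delta_{g_1 \cdots g_m} - \delta_{1_G}$ can be accessed directly by expanding a product of binomial-type factors, bypassing any need for induction.

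More precisely, since $\delta_h \delta_k = \delta_{hk}$, I would write
$$\delta_{g_1 g_2 \cdots g_m} = \delta_{g_1}\delta_{g_2}\cdots \delta_{g_m} = (\Delta_{g_1}+\delta_{1_G})(\Delta_{g_2}+\delta_{1_G})\cdots(\Delta_{g_m}+\delta_{1_G}),$$
and then expand this product factor by factor, preserving the left-to-right order (so no commutativity of convolution is needed). Each choice of $(\ve_1, \ldots, \ve_m) \in \{0,1\}^m$ produces a summand in which the $j$-th slot contributes $\Delta_{g_j}$ when $\ve_j = 1$ and $\delta_{1_G}$ when $\ve_j = 0$; since $\delta_{1_G}$ is an identity, the latter simply disappears from the product, matching the convention $\Delta_{g_j}^0 = 1$. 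This yields
$$\delta_{g_1 \cdots g_m} = \sum_{(\ve_1,\ldots,\ve_m) \in \{0,1\}^m} \Delta_{g_1}^{\ve_1} \Delta_{g_2}^{\ve_2} \cdots \Delta_{g_m}^{\ve_m}.$$
Isolating the unique term indexed by $(0,0,\ldots,0)$, which equals $\delta_{1_G}$, and subtracting it from both sides produces exactly the claimed identity \eqref{ProdInduct}. The special case \eqref{ProdStrEqn} then follows by setting $m=2$ and enumerating the three elements of $\mc{B}^2 = \{(1,0),(0,1),(1,1)\}$.

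I do not anticipate any serious obstacle: the identity is a formal consequence of the binomial-style expansion together with the fact that $\delta_{1_G}$ is the multiplicative unit. The hypothesis $g_j \neq 1_G$ is not actually used in the algebraic expansion itself, but is natural in context because the individual $\Delta_{g_j}$ are intended as nonzero elements of $\ell^1_0(G)$. If preferred, an entirely equivalent proof by induction on $m$ can be given: the base case $m=2$ follows from a direct expansion of $(\delta_g - \delta_{1_G})(\delta_{g'} - \delta_{1_G})$, and the inductive step applies this $m=2$ identity with $g = g_1 \cdots g_{m-1}$ and $g' = g_m$, combined with the inductive hypothesis applied to $\Delta_{g_1 \cdots g_{m-1}}$.
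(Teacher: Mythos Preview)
Your proof is correct and is actually cleaner than the paper's. The paper proceeds by induction on $m$: it first verifies \eqref{ProdStrEqn} by a direct computation, and then for the inductive step writes $\Delta_{g_1\cdots g_m}=\Delta_{(g_1\cdots g_{m-1})g_m}$, applies the $m=2$ case, and invokes the hypothesis on $\Delta_{g_1\cdots g_{m-1}}$. Your argument instead recognizes the identity as the expansion of $\prod_{j=1}^m(\Delta_{g_j}+\delta_{1_G})=\delta_{g_1\cdots g_m}$ minus the unique all-zero term $\delta_{1_G}$, which makes the combinatorics transparent and handles all $m$ at once without an inductive scaffold. The inductive alternative you sketch at the end is exactly the paper's proof, so you have covered both routes.
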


\begin{proof}
Fix $g,g'\in G\setminus\{1_G\}$. We argue by induction. A direct computation gives 
\begin{align*}
\Delta_{gg'}=\Delta_g\Delta_{g'}+\Delta_{g'}+\Delta_{g}
\end{align*}
Next, assume equation \eqref{ProdInduct} holds for the product of $m-1$ non-identity elements of $G$. Choose $g_1,g_2,\ldots,g_{m-1},g_m\in G\setminus\{1_G\}$. Then, using the induction hypothesis at the last step
\begin{align*}
\Delta_{g_1g_2\cdots g_{m-1}g_m}&=\Delta_{(g_1g_2\cdots g_{m-1})g_m}\\
&=\Delta_{g_1g_2\cdots g_{m-1}}\Delta_{g_m}+\Delta_{g_1g_2\cdots g_{m-1}}+\Delta_{g_m}\\
&=\sum_{(\ve_j)\in\mc{B}^m}\Delta_{g_1}^{\ve_1}\Delta_{g_2}^{\ve_2}\cdots\Delta_{g_{m-1}}^{\ve_{m-1}}\Delta_{g_m}^{\ve_m}.
\end{align*} 
This finishes the proof. 
\end{proof} 

We are now ready to prove the cyclic group case. 

\begin{proposition}\label{GelfandIsomCyclic}
Let $G=\zn\slash n\zn$ for $n \in \Z_{>1}$. Then $\ell_0^1(G)$ is algebraically isomorphic to $\cn^{n-1}$ equipped with pointwise multiplication. 
\end{proposition}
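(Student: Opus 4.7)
The plan is to identify $\ell_0^1(\Z/n\Z)$ with its image under the Gelfand transform. By Lemma \ref{TrivialJacobsonRadical}, the Gelfand map $\Gamma \colon \ell_0^1(G) \to C(\op{Max}(\ell_0^1(G)))$ is an algebraic isomorphism onto its image, so it suffices to show that $\op{Max}(\ell_0^1(G))$ consists of exactly $n-1$ points, which would give $C(\op{Max}(\ell_0^1(G))) \cong \C^{n-1}$ with pointwise multiplication. In fact, since the image is a unital subalgebra of $C(\op{Max}(\ell_0^1(G)))$ that separates points (the characters separate points by construction) and contains constants (via $\one_0$), once we know the maximal ideal space is finite of size $n-1$ the image must be all of $C(\op{Max}(\ell_0^1(G))) \cong \C^{n-1}$.

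The key step is to count the characters $\chi \colon \ell_0^1(G) \to \C$. Since $\{\Delta_{[1]}, \ldots, \Delta_{[n-1]}\}$ is a basis of $\ell_0^1(G)$ by Proposition \ref{finunit}\eqref{finunit1}, a character is uniquely determined by the values $x_{[k]} = \chi(\Delta_{[k]})$ for $1 \leq k \leq n-1$, with the convention $x_{[0]} = 0$. Moreover, applying $\chi$ to equation \eqref{ProdStrEqn} of Lemma \ref{ProdStructure} yields
\[
x_{[j]+[k]} = x_{[j]} x_{[k]} + x_{[j]} + x_{[k]}
\]
for all $1 \leq j, k \leq n-1$, which is exactly the system \eqref{DeltaSystem} of Lemma \ref{DeltaSystemSolns1}. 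Conversely, any tuple $(x_{[1]}, \ldots, x_{[n-1]})$ satisfying this system defines an algebra homomorphism on the generators, and by Lemma \ref{ProdStructure} this extends consistently to a character on all of $\ell_0^1(G)$ (since every structural relation among the $\Delta_{[k]}$ in the multiplication table is a consequence of \eqref{ProdStrEqn}).

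Therefore, characters of $\ell_0^1(\Z/n\Z)$ are in bijection with nontrivial solutions of \eqref{DeltaSystem}, of which Lemma \ref{DeltaSystemSolns1} guarantees exactly $n-1$, parametrized by the $n$-th roots of unity $\zeta$ via $x_{[k]} = \zeta^k - 1$ (excluding $\zeta = 1$, which gives the trivial solution and does not correspond to a character of the nonunital-free ideal). This shows $|\op{Max}(\ell_0^1(G))| = n-1$, so $\Gamma$ maps into $\C^{n-1}$. Injectivity of $\Gamma$ comes from Lemma \ref{TrivialJacobsonRadical}, and since both $\ell_0^1(G)$ and $\C^{n-1}$ have dimension $n-1$, $\Gamma$ is an algebraic isomorphism.

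The main obstacle I anticipate is making the bijection between characters and solutions of \eqref{DeltaSystem} airtight: one must verify that a solution of the system truly yields a well-defined algebra homomorphism on all of $\ell_0^1(G)$, i.e., that assigning $\chi(\Delta_{[k]}) = x_{[k]}$ and extending linearly is multiplicative on arbitrary products (not just on generators). This is where Lemma \ref{ProdStructure} is essential, as it reduces every product $\Delta_{[j_1]} \cdots \Delta_{[j_m]}$ back to expressions in $\Delta_{[j_1 + \cdots + j_m]}$ via \eqref{ProdStrEqn}, so consistency on the generating relation \eqref{DeltaSystem} is enough to guarantee multiplicativity everywhere.
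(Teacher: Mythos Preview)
Your proposal is correct and follows essentially the same route as the paper: reduce to counting characters via Lemma \ref{TrivialJacobsonRadical}, use Lemma \ref{ProdStructure} to translate multiplicativity into the system \eqref{DeltaSystem}, and invoke Lemma \ref{DeltaSystemSolns1} to get exactly $n-1$ nontrivial solutions. The only minor difference is that the paper cites Lemma \ref{TrivialJacobsonRadical} directly for surjectivity of $\Gamma$, whereas you add a (redundant but harmless) dimension count; otherwise the arguments coincide.
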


\begin{proof}
We have already established that $\ell_0^1(\zn\slash n\zn)$ is an ideal in $\ell^1(\zn\slash n\zn)$ and so we immediately have an algebraic isomorphism between $\ell_0^1(\zn\slash n\zn)$ and continuous functions on its maximal ideal space. Thus, 
it suffices to establish that the maximal ideal space of $\ell_0^1(\zn\slash n\zn)$ consists of 
$n-1$ elements. 

Let $\zn\slash n\zn=\{[0],[1],\ldots,[n-1]\}$. Thanks to Equation \eqref{ProdStrEqn}, any multiplicative linear functional $\omega \colon \ell_0^1(\zn\slash n\zn)\rightarrow \cn$ must satisfy 
\begin{equation}
\omega(\Delta_{[j]+[k]})=\omega(\Delta_{[j]})\omega(\Delta_{[k]})+\omega(\Delta_{[j]})+\omega(\Delta_{[k]})
\end{equation}
for all $1\leq j,k\leq n-1$. Letting $\omega(\Delta_{[j]})=x_{[j]}$, we recover the system described in Lemma \ref{DeltaSystemSolns1}. We observe that each solution of the system \eqref{DeltaSystem} is completely determined by the choice of $x_{[1]}$, which must be chosen to satisfy $(x_{[1]}+1)^n=1$. For each $1\leq j\leq n-1$, let
\begin{equation}\label{xij}
\omega_j(\Delta_{[1]})=\te{exp}(2\pi i\cdot j/n)-1=x_{j,[1]}.
\end{equation}
As discussed, this choice determines the remaining $\omega_j(\Delta_{[k]})=x_{j,[k]}$ and so we have a one-to-one correspondence between the multiplicative linear functionals of the ideal $\ell^1_0(\zn\slash n\zn)$ and the solutions of System \eqref{DeltaSystem}. Of course, $\op{Max}(\ell_0^1(\zn\slash n\zn))$ is in bijection with $\Omega = \{\omega_j\,:\,1\leq j\leq n-1\}$. Lemma \ref{TrivialJacobsonRadical} gives that 
$$\Gamma \colon \ell_0^1(\zn\slash n\zn)\rightarrow C(\Omega)$$
is an algebraic isomorphism. Finally since the canonical map 
$C(\Omega)   \to \C^{n-1}$ given by
\[
f \mapsto (f(\omega_1),f(\omega_2),\ldots,f(\omega_{n-1}))
\]
is also  an algebraic isomorphism, we have indeed shown that $\ell_0^1(\zn\slash n\zn)$ and 
$\C^{n-1}$ are algebraically isomorphic, completing the proof.
\end{proof}

We now extend Proposition \ref{GelfandIsomCyclic} to any finite abelian group. Consider 
\[
G=(\zn\slash n_1\zn) \times (\zn\slash n_2\zn) \times \cdots\times (\zn\slash n_m\zn),
\]
where $n_1, \ldots, n_m \in \Z_{>1}$. Then any $a\in\ell_0^1(G)$ may be written as
\begin{equation*}
a=\sum_{(g_1,g_2,\ldots,g_m)\in G}a(g_1,g_2,\ldots,g_m)\Delta_{(g_1,g_2,\ldots,g_m)}.
\end{equation*}
We note that $(g_1,g_2,\ldots,g_m)(g_1',g_2',\ldots,g_m')=(g_1+g_1',g_2+g_2',\ldots,g_m+g_m')$ and we will abuse notation by writing $g$ for $(0,\ldots,g,\ldots0)$ and $g_1g_2\cdots g_m$ for $(g_1,g_2,\ldots,g_m)$.

For each cyclic group $\zn\slash n_j\zn$, let $\{\omega_j^{(k)}\}_{0\leq k\leq n_j-1}$ be the  multiplicative linear functionals associated with $\op{Max}\,(\ell_0^1(\zn\slash n_j\zn))$ where $\omega_j^{(0)}$ represents the zero function on $\ell_0^1(\zn\slash n_j\zn$). We show that all multiplicative linear functionals of $\ell^1_0(G)$ are of the form
\begin{equation}\label{MultLinFuncProd}
\omega(\Delta_{g_1g_2\cdots g_m})=\sum_{(\ve_j)\in\mc{B}^m}\omega_1^{(k_1)}(\Delta_{g_1}^{\ve_1})\omega_2^{(k_2)}(\Delta_{g_2}^{\ve_2})\cdots\omega_m^{(k_m)}(\Delta_{g_m}^{\ve_m})
\end{equation}
for all $0\leq k_i\leq n_i,1\leq i\leq m$ where we interpret $\omega_{n_j}^{(k_j)}(\Delta_{g_k}^0)$ as equaling 1 when in a product with other nonzero terms.

Suppose $\omega \colon \ell_0^1(G)\rightarrow\cn$ is a multiplicative linear functional. When $\omega$ is restricted to linear sums of $\Delta_{g_j}$ for $g_j\in \zn\slash n_j\zn$, which we will denote by $\omega_j$, then $\omega_j$ is a multiplicative linear functional of $\ell_0^1(\zn\slash n_j\zn)$. Thus, there must exist some $k_j\in\{1,2,\ldots,n_j-1\}$ such that $\omega_j=\omega_j^{(k_j)}$. Equation \eqref{MultLinFuncProd} then follows by previous comments and Lemma \ref{ProdStructure}. Note that $\omega$ is the zero map exactly when all the $\omega_j^{(k_j)}$ are zero maps. Thus, we see that we have $n_1n_2\cdots n_m=\op{card}(G)$ possible choices for $\omega$, including the zero map.

This leads to the following theorem and corollary.

\begin{theorem}\label{GelfandIsom}
Let $G$ be a finite abelian group with $\op{card}(G)>1$. Then the Gelfand transform from $\ell_0^1(G)$ to $C(\{1,2,\ldots,n-1\})$ is an algebraic isomorphism. 
\end{theorem}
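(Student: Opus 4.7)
My plan is to combine the structure theorem for finite abelian groups with Lemma \ref{TrivialJacobsonRadical} and the explicit description of multiplicative linear functionals given in the paragraph following equation \eqref{MultLinFuncProd}. Since $\ell_0^1(G)$ is a (two-sided) ideal in $\ell^1(G)$ and $G$ is finite abelian, Lemma \ref{TrivialJacobsonRadical} immediately yields that the Gelfand transform
\[
\Gamma \colon \ell_0^1(G) \to C\big(\mathrm{Max}(\ell_0^1(G))\big)
\]
is an algebraic isomorphism. Thus the theorem reduces to showing that $\mathrm{Max}(\ell_0^1(G))$ has exactly $n-1$ points, where $n = \mathrm{card}(G)$.

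By the fundamental theorem of finite abelian groups, write
\[
G = (\mathbb{Z}/n_1\mathbb{Z}) \times (\mathbb{Z}/n_2\mathbb{Z}) \times \cdots \times (\mathbb{Z}/n_m\mathbb{Z}),
\]
with $n_1 n_2 \cdots n_m = n$. The product structure formula \eqref{ProdStrEqn} in Lemma \ref{ProdStructure} shows that once a multiplicative linear functional $\omega$ on $\ell_0^1(G)$ is specified on the elements $\Delta_g$ for $g$ running over the factor subgroups, its values on all $\Delta_{g_1 g_2 \cdots g_m}$ are determined via \eqref{MultLinFuncProd}. Restricting such an $\omega$ to the subalgebra generated by $\{\Delta_g : g \in \mathbb{Z}/n_j\mathbb{Z}\}$ gives a (possibly zero) multiplicative linear functional on $\ell_0^1(\mathbb{Z}/n_j\mathbb{Z})$, and Proposition \ref{GelfandIsomCyclic} (via Lemma \ref{DeltaSystemSolns1}) classifies these: for each $j$ there are $n_j - 1$ nonzero choices $\omega_j^{(k_j)}$ with $k_j \in \{1, \ldots, n_j - 1\}$, plus the zero restriction $\omega_j^{(0)}$.

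Conversely, I will verify that any choice of tuple $(\omega_1^{(k_1)}, \ldots, \omega_m^{(k_m)})$ not all equal to zero restrictions extends, via the formula \eqref{MultLinFuncProd}, to a well-defined nonzero multiplicative linear functional on $\ell_0^1(G)$: multiplicativity on products $\Delta_{gg'}$ of elements supported in different factors reduces to \eqref{ProdStrEqn}, while multiplicativity within a single factor is inherited from the cyclic case. Counting tuples gives $n_1 n_2 \cdots n_m - 1 = n - 1$ nonzero multiplicative linear functionals, and they are pairwise distinct since they differ on at least one $\Delta_g$ for $g$ in some factor. Hence $\mathrm{Max}(\ell_0^1(G))$ has exactly $n-1$ points, so
\[
C\big(\mathrm{Max}(\ell_0^1(G))\big) \cong C(\{1, 2, \ldots, n-1\})
\]
as algebras, and composing with $\Gamma$ yields the desired algebraic isomorphism.

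The main obstacle I anticipate is verifying that the formula \eqref{MultLinFuncProd} genuinely defines a multiplicative functional on all of $\ell_0^1(G)$, not just on elements supported in a single factor; this requires an induction on the number of factors using \eqref{ProdInduct} from Lemma \ref{ProdStructure}, together with careful bookkeeping of the $\varepsilon_j \in \{0, 1\}$ indices. Once this combinatorial check is carried out, distinctness and the count are immediate, and no further analytic input is needed because $\ell_0^1(G)$ is finite-dimensional and has trivial Jacobson radical.
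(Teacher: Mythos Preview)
Your proposal is correct and follows essentially the same route as the paper: the discussion preceding Theorem \ref{GelfandIsom} in the paper is its proof, and it proceeds exactly as you outline—decompose $G$ into cyclic factors, restrict any multiplicative functional $\omega$ to each factor to obtain one of the $\omega_j^{(k_j)}$ classified in the cyclic case, use Lemma \ref{ProdStructure} and formula \eqref{MultLinFuncProd} to see that $\omega$ is determined by these restrictions, count $n_1\cdots n_m - 1 = n-1$ nonzero choices, and conclude via Lemma \ref{TrivialJacobsonRadical}. If anything, you are slightly more explicit than the paper about flagging the converse verification (that every tuple of restrictions actually extends to a multiplicative functional on $\ell_0^1(G)$) and about distinctness, both of which the paper treats as evident.
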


\begin{corollary}\label{GelfandIsom2}
Let $G$ be a finite abelian group with $\op{card}(G)>1$. Then the algebras $\ell_0^1(G)$, $\ell^1(\zn\slash (n-1)\zn)$, $C(\{1,2,\ldots,n-1\})$, and $\cn^{n-1}$ are all algebraically isomorphic.
\end{corollary}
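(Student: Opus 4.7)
The plan is to chain together three algebraic isomorphisms, with Theorem \ref{GelfandIsom} doing the heavy lifting for the most substantive link. That theorem already establishes $\ell_0^1(G) \cong C(\{1,2,\ldots,n-1\})$ as an algebraic isomorphism via the Gelfand transform, so my job reduces to showing that $C(\{1,2,\ldots,n-1\})$, $\C^{n-1}$, and $\ell^1(\Z/(n-1)\Z)$ are mutually algebraically isomorphic.

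The identification $C(\{1,2,\ldots,n-1\}) \cong \C^{n-1}$ with pointwise multiplication is immediate: continuous complex-valued functions on the discrete $(n-1)$-point set are just $(n-1)$-tuples of complex numbers under pointwise operations, with the evident isomorphism $f \mapsto (f(1), \ldots, f(n-1))$. For the remaining piece, I would apply the same Gelfand-theoretic argument used in the proof of Proposition \ref{GelfandIsomCyclic}. Namely, $\ell^1(\Z/(n-1)\Z)$ is a commutative unital Banach algebra of complex dimension $n-1$; by Maschke/Wedderburn (as invoked in Lemma \ref{TrivialJacobsonRadical}), its Jacobson radical is trivial, so its Gelfand transform into continuous functions on its maximal ideal space is injective. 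The maximal ideal space is parametrized by the characters $\chi_k \colon [j] \mapsto \exp(2\pi i j k /(n-1))$ for $k = 0, 1, \ldots, n-2$, giving exactly $n-1$ points. A dimension count then forces the Gelfand transform $\ell^1(\Z/(n-1)\Z) \to C(\{0,1,\ldots,n-2\})$ to be surjective, hence an algebraic isomorphism.

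Chaining these together yields the chain $\ell_0^1(G) \cong C(\{1,\ldots,n-1\}) \cong \C^{n-1} \cong \ell^1(\Z/(n-1)\Z)$, all algebraically. I do not anticipate any real obstacle, since all the substantive work is already packaged either in Theorem \ref{GelfandIsom} or in the standard Gelfand theory of finite-dimensional commutative semisimple Banach algebras; the corollary is essentially a bookkeeping statement assembling these pieces.
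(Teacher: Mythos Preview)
Your proposal is correct and matches the paper's intent: the paper states this corollary without proof, treating it as an immediate consequence of Theorem~\ref{GelfandIsom} together with the standard Gelfand/Fourier identification of $\ell^1(\Z/(n-1)\Z)$ with $\C^{n-1}$, exactly as you have spelled out.
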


Recall that we used a bicontractive idempotent as the primary tool to show that for 
certain values of $p \in [1, \infty) \setminus\{2\}$, the algebra  $M_0^p(\Z/3\Z)$ cannot be isometrically represented on any $L^q$-space. To this end, we present a description of the idempotents 
of $\ell_0^1(\zn\slash n\zn)$ and, in consequence, those of $M_0^p(\Z/n\Z)$ since the algebraic structures are the same. We start with a short definition. 

\begin{definition}\label{atilde}
Let $a \in \ell_0^1(\Z/n\Z)$ for $n \in \Z_{>1}$. Then, since $a$ is uniquely determined by $[a]=(a([1]), \ldots, a([n-1])) \in \C^{n-1}$, 
we define a canonical map $\ell_0^1(\Z/n\Z) \to \C^{n-1}$ via $a \mapsto [a]\in \C^{n-1}$.
\end{definition}

Let $n \in \Z_{>1}$. We define the \textit{translation matrix of $\ell_0^1(\zn\slash n\zn)$} by 
\begin{equation}\label{transmatrix}
X=\l(\begin{array}{cccc}
x_{1,[1]}& x_{1,[2]}&\cdots & x_{1,[n-1]}\\
x_{2,[1]}& x_{2,[2]}&\cdots & x_{2,[n-1]}\\
\vdots & \vdots & \ddots & \vdots\\
x_{n-1,[1]}& x_{n-1,[2]} & \cdots &x_{n-1,[n-1]}
\end{array}\r).
\end{equation}
Proposition \ref{GelfandIsomCyclic} implies that for any $v\in\cn^{n-1}$ there exists a unique $a\in \ell_0^1(\zn\slash n\zn)$ such that $X[a]=v$, where $[a]$ is as in Definition \ref{atilde}. In particular, if we let $\{e_k\}_{k=1}^{n-1}$ represent the standard basis of $\cn^{n-1}$, then all non-zero idempotents of $\cn^{n-1}$ are of the form
$$\ve_1e_1+\ve_2e_2+\cdots+\ve_{n-1}e_{n-1}\quad \te{for} \quad (\ve_1,\ve_2,\ldots,\ve_{n-1})\in\mc{B}^{n-1}$$
where $\mc{B}^{n-1}$ is as in Lemma \ref{ProdStructure}. Then our discussion above yields the following
Proposition.
\begin{proposition}\label{idempotentsprop}
Fix $n \in \Z_{>1}$ and let $a_k\in\ell_0^1(\zn\slash n\zn)$ be the unique element such that $X[a_k]=e_k$. Then
\begin{enumerate}
\item each $a_k$ is an idempotent of norm at least one,
\item $a_ka_j=0$ for all $k\neq j$,
\item $a_1+a_2+\cdots+a_{n-1}=\one_0$,
\item $\{a_k\}_{k=1}^{n-1}$ forms a basis for $\ell_0^1(\zn\slash n\zn)$,
\item Each non-zero idempotent of $\ell_0^1(\zn\slash n\zn)$ is of the form 
\begin{equation*}
\ve_1a_1+\ve_2a_2+\cdots+\ve_{n-1}a_{n-1}\quad \te{for} \quad (\ve_1,\ve_2,\ldots,\ve_{n-1})\in\mc{B}^{n-1},
\end{equation*}
\item $\ell_0^1(\zn\slash n\zn)$ has precisely $2^{n-1}$ idempotents.
\end{enumerate}
\end{proposition}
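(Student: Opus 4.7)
The plan is to piggyback on the algebraic isomorphism $\ell_0^1(\Z/n\Z) \cong \C^{n-1}$ (with pointwise multiplication) established in Proposition \ref{GelfandIsomCyclic}, and to observe that the translation matrix $X$ is nothing other than the matrix of the Gelfand transform with respect to the bases $\{\Delta_{[k]}\}_{k=1}^{n-1}$ of $\ell_0^1(\Z/n\Z)$ and $\{\omega_j\}_{j=1}^{n-1}$ of $\op{Max}(\ell_0^1(\Z/n\Z))$. Indeed, if $a \in \ell_0^1(\Z/n\Z)$ is written as $a = \sum_{k=1}^{n-1} a([k]) \Delta_{[k]}$, then entry $j$ of $X[a]$ is $\sum_{k=1}^{n-1} x_{j,[k]} a([k]) = \sum_{k=1}^{n-1}\omega_j(\Delta_{[k]}) a([k]) = \omega_j(a)$. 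Hence the defining condition $X[a_k] = e_k$ says exactly that, under the Gelfand isomorphism $\Gamma$, the element $a_k$ is sent to the standard basis vector $e_k \in \C^{n-1}$.

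Once this identification is in hand, parts (1)--(4) are immediate consequences of the analogous (trivial) facts in $\C^{n-1}$. Since $e_k^2 = e_k$ and $e_k e_j = 0$ for $k \neq j$ under pointwise multiplication, the corresponding relations $a_k^2 = a_k$ and $a_k a_j = 0$ hold in $\ell_0^1(\Z/n\Z)$. Submultiplicativity then gives $\|a_k\| = \|a_k^2\| \leq \|a_k\|^2$, and because $a_k \neq 0$ we conclude $\|a_k\| \geq 1$, proving (1) and (2). For (3), I will compute $\omega_j(\one_0)$ directly using \eqref{xij} together with the identity $\sum_{k=1}^{n-1} e^{2\pi i jk/n} = -1$ (valid for $1 \leq j \leq n-1$) to obtain $\omega_j(\one_0) = 1$ for every $j$; thus $\Gamma(\one_0)$ is the all-ones vector $\sum_k e_k$, and (3) follows. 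Part (4) is clear because $\Gamma$ is a linear isomorphism sending $\{a_k\}$ to the standard basis of $\C^{n-1}$.

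For parts (5) and (6), I will use that idempotents are preserved by algebra isomorphisms, so the idempotents of $\ell_0^1(\Z/n\Z)$ are in bijection with the idempotents of $\C^{n-1}$ under pointwise multiplication. Any such idempotent $v = (v_1, \ldots, v_{n-1})$ satisfies $v_k^2 = v_k$ coordinatewise, forcing $v_k \in \{0,1\}$. Hence the idempotents of $\C^{n-1}$ are exactly the $2^{n-1}$ vectors $\sum_{k=1}^{n-1} \varepsilon_k e_k$ with $(\varepsilon_1, \ldots, \varepsilon_{n-1}) \in \{0,1\}^{n-1}$. Pulling back through $\Gamma$ and discarding the zero tuple (as in the definition of $\mathcal{B}^{n-1}$ from Lemma \ref{ProdStructure}), I recover the claimed description of every non-zero idempotent of $\ell_0^1(\Z/n\Z)$, and the total count of idempotents (including $0$) is $2^{n-1}$.

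There is essentially no hard step: the whole argument reduces to recognizing $X$ as the matrix of $\Gamma$ in coordinates and transporting elementary facts about the commutative algebra $\C^{n-1}$ back to $\ell_0^1(\Z/n\Z)$. The only point requiring a small calculation is the verification $\Gamma(\one_0) = (1,1,\ldots,1)$, which amounts to a standard root-of-unity sum.
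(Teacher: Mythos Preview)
Your proposal is correct and follows essentially the same approach as the paper: the paper's ``proof'' is the discussion immediately preceding the proposition, which observes that $X[a]$ realizes the Gelfand transform in coordinates and then transports the obvious facts about idempotents in $\C^{n-1}$ back through the isomorphism of Proposition \ref{GelfandIsomCyclic}. Your write-up is slightly more explicit than the paper's (e.g., the submultiplicativity argument for $\|a_k\|\geq 1$ and the root-of-unity sum for part (3); the latter could alternatively be shortened by noting that any nonzero character on a unital algebra sends the identity to $1$), but the content is the same.
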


\section{$L^p$-operator norms on $\C^n$}

We have shown that the conclusion of Theorem \ref{M(A)isLp} can go either way when dropping the two hypotheses.  
As shown in Subsection \ref{UTM}, for each $p \in [1,\infty)$, the algebra of strictly upper triangular matrices acting on $\ell_2^p$, $T_2^p$, is a degenerate $L^p$-operator algebra without a cai and yet its multiplier algebra is nondegenerately representable on an $L^q$-space for any $q \in [1, \infty)$. In contrast, as shown in Subsection \ref{AugId}, $F^p_{0}(\zn\slash 3\zn)$ is a degenerate $L^p$-operator algebra without a cai, but when $p \in [1,p_0] \cup[p_0', \infty)$, its multiplier algebra cannot be isometrically represented on an $L^q$-space for any $q \in [1, \infty)$. At this time, it is unclear what conditions are necessary to guarantee that the multiplier algebra of a degenerate $L^p$-operator algebra without a cai is again an $L^p$-operator algebra. 

Given that the norms of $L^p$-operator algebras are not unique, a natural question is to characterize norms on $\C^n$ that make it into an $L^p$-operator algebra. This problem is already interesting for $p=1$, for which we know of at least two norms on $\C^n$ which make it an $L^1$-operator algebra:
\begin{enumerate}
\item $\C^n$ with the supremum norm is an $L^1$-operator algebra acting on $\ell_n^1$
via multiplication operators.
\item Let $\mathcal{F} \colon \ell^1(\Z/n\Z) \to C(\Z/n\Z)$ the Fourier transform. 
Then we have algebra isomorphisms
\[
\C^n \cong C(\Z/n\Z) \cong  \mathcal{F}^{-1}(C(\Z/n\Z)) = \ell^1(\Z/n\Z) 
\]
which make $\C^n$ an $L^1$-operator algebra with norm coming 
from the identification with $\ell^1(\Z/n\Z)$. 
\end{enumerate}
\begin{ques}
Are these the only two norms that make $\C^n$ an $L^1$-operator algebra?
\end{ques}
Theorem \ref{NotLpOpAlg} exhibits a norm in $\C^2$ for which $\C^2$ fails to be an $L^q$-operator algebra for any $q \in [1, \infty)$, which is to say the identification of $M_0^1(\Z/3\Z)$ with $\C^2$ carries neither of the norms listed above. This leads to a more general question.
\begin{ques}\label{nextpaperquestion}
Let $n\in \Z_{>1}$ and consider $\C^n$ with pointwise multiplication. Is it possible to find all the norms on $\C^n$ that make it an $L^1$-operator algebra? An $L^p$-operator algebra for some $p\in[1,\infty)$? 
\end{ques}

\subsection*{Acknowledgments}
The authors would like to thank both David Blecher and Eusebio Gardella for their encouragement and discussions particularly regarding Question \ref{nextpaperquestion} above. We are also thankful to Hannes Thiel for pointing out several references and for encouraging us to generalize our main results to $p>1$. The second named author would also like to thank N. Christopher Phillips, who read earlier versions of Section \ref{secMA} and gave suggestions to improve the presentation. Finally, we would like to thank the anonymous referee for a careful and thorough reading of the initial draft and for sending useful references and suggestions that greatly improved some content in this paper. In particular, we thank the referee for pointing out that it made more sense to look at the augmentation ideal of $F^p(G)$ instead of $F^p_{\op{r}}(G)$ as we had initially considered. 
\bibliographystyle{plain}
\bibliography{Lp-mult.bib}

\end{document}